\newcommand{\dl}{\lambda}
\newcommand{\mc}{\mathbb{C}}
\newcommand{\C}{\mathbb{C}}
\newcommand{\N}{\mathbb{N}}
\newcommand{\Z}{\mathbb{Z}}
\newcommand{\p}{\mathbb{P}}
\newcommand{\cald}{\mathcal{D}}
\newcommand{\tf}{\widetilde{\mathcal{D}}}
\newcommand{\fol}{\mathcal{F}}
\newcommand{\tilf}{\widetilde{\mathcal{F}}}
\newcommand{\partx}{{\partial /\partial x}}
\newcommand{\party}{{\partial /\partial y}}
\newcommand{\partz}{{\partial /\partial z}}
\newcommand{\R}{\mathbb{R}}
\newcommand{\psl}{\mathrm{PSL}\, (2, \mathbb{C})}
\newlength{\dhatheight}
\newcommand{\CP}{{\mathbb{C} \mathbb{P}}}
\def\picill#1by#2(#3)#4
		\vfill\special{illustration #3 scaled #4}}}
\newtheorem{theorem}{Theorem}[section]
\newtheorem{prop}[theorem]{Proposition}
\newtheorem{corol}[theorem]{Corollary}
\newtheorem{lemma}[theorem]{Lemma}
\newtheorem{obs}[theorem]{Remark}
\theoremstyle{definition}
\newtheorem{defi}[theorem]{Definition}
\theoremstyle{remark}
\newtheorem{remark}[theorem]{Remark}
\newtheorem{ex}{Example}
\begin{document}

\title[Global dynamics and singularities of foliations]{Global dynamics and perspectives on singularities of holomorphic foliations}

\author[J. Rebelo]{Julio Rebelo}
\address{Institut de Math\'ematiques de Toulouse ; UMR 5219, Universit\'e de Toulouse, 118 Route de Narbonne, F-31062 Toulouse, France.}
\email{\href{mailto:rebelo@math.univ-toulouse.fr}{rebelo@math.univ-toulouse.fr}}

\author[H. Reis]{Helena Reis}
\address{Centro de Matem\'atica da Universidade do Porto, Faculdade de Economia da Universidade do Porto, Portugal.}
\email{\href{mailto:hreis@fep.up.pt}{hreis@fep.up.pt}}


\subjclass[2010]{Primary 34M35, 32M25; Secondary 34M45, 32M05.}
\keywords{}

\begin{abstract}
In dimensions greater than or equal to~$3$, the local structure of a singular holomorphic foliation conceals a globally
defined foliation on the projective space of dimension one less. In this paper, we will
investigate how the global dynamics of the latter foliation exerts influence
on several problems that apparently have a purely local nature. In the course of the discussion,
a few recent results and open problems in the area will be reviewed as well.
\end{abstract}

\maketitle
\setcounter{tocdepth}{1}

\tableofcontents

\section{Introduction}

All {\it foliations}\, considered in this work are {\it holomorphic and (possibly) singular}. Whereas our main object are
$1$-dimensional holomorphic foliations and holomorphic/meromorphic vector fields, foliations of codimension~$1$ will also
play a role in the discussion especially when the ambient is of dimension~$3$, see Section~\ref{basics} for accurate definitions. Foliations of dimension~$1$
defined on some complex manifold $M$ will typically be denoted by $\fol$ while $\cald$ will stand for codimension~$1$
foliations. The purpose of this paper is to discuss recent results and open problems in the local theory of $1$-dimensional
foliations when the ambient manifold $M$ has dimension~$3$ though, occasionally, results and questions in higher dimensions
will also be included.

Foliations defined on complex surfaces, i.e. complex manifolds of dimension~$2$, are basically left aside in this paper
largely due to the fact that their local theory is much more advanced than their higher dimensional counterparts. Indeed, these
singularities are only mentioned in Section~\ref{basics} and, yet, with the simple purpose of identifying a few issues that
make them so special and amenable to very detailed analysis. In doing so, we will be able to single out one of the most
fundamental issues guiding the discussion conducted here: the presence of a global dynamical phenomenon intrinsically
attached to germs $\fol$ of $1$-dimensional foliations on $\C^n$ provided that $n\geq 3$. In slightly vague though more
incisive words, the understanding of a germ of $1$-dimensional foliation $\fol$ on $\C^n$, $n \geq 3$, passes through the
description of a foliation defined on $\CP^{n-1}$ which, as a global object, may exhibit a wild dynamical behavior (cf.
Section~\ref{core_dynamics}). The global foliation in question will usually be referred to as the {\it core foliation}\, of
the (local) foliation $\fol$. We will also use the phrase {\it core dynamics of $\fol$}\, to refer to the dynamics of the
core foliation associated with $\fol$.

The common thread of this paper is the existence and implications of a global dynamical system inherently attached to the structure
of a singularity of a $1$-dimensional, holomorphic foliation defined on $(\C^n,0)$ provided that $n \geq 3$. Basically,
we will discuss which types of results can be proved if the above mentioned dynamics can accurately be described and, similarly,
which general problems may provide us with the tools to ensure this dynamics is ``tame enough'' to be described, while
bearing in mind that in full generality this dynamics can be extremely wild.

The paper is structured as follows. In Section~\ref{basics}, we introduce standard terminology
and recall some basic features of singular foliations, in particular pointing out fundamental issues setting apart
foliations of dimension~$1$ and foliations of codimension~$1$.
Then we focus on the special case of singularities
of foliations defined on $(\C^2,0)$. Whereas this case is clearly distinguished by the fact that the foliations
are simultaneously of dimension~$1$ and of codimension~$1$, we discuss to a rather non-trivial extent the main $2$-dimensional
issues allowing for the existence of such a sophisticated theory covering truly fine issues.

In Section~\ref{core_dynamics}, we introduce a fundamental object that exists for singularities of ($1$-dimensional)
foliations defined on $(\C^n,0)$ provided that $n \geq 3$, namely: the {\it core dynamics}. This is a global
foliation defined on $\CP^{n-1}$ whose (global) dynamics poses a fundamental obstacle towards
the full understanding of the initial singular point. In particular, we explain how this core dynamics plays a
major role in problems about existence of separatrices for codimension~$1$ foliations on $(\C^3,0)$. Also,
we show how its very existence basically rules out any hope of achieving a full understanding of large classes
of singular points.

The remainder of this survey is devoted to more advanced material, in particular touching on quite a few open
problems of current interest. Section~\ref{resolution_blowups} contains a detailed review of resolution theorems
for singular points of $1$-dimensional foliations on $(\C^3,0)$. The first definitive resolution theorem in this
context was obtained by McQuillan and Panazzolo in \cite{danielMcquillan} which, in turn, relies heavily on
Panazzolo's algorithm introduced in \cite{daniel2}. More recently, a different proof based on Zariski general
point of view was obtained in \cite{helenajulio_RMS} which can be seen as the completion of the previous work carried out
by Cano-Roche-Spivakovsky \cite{C-R-S}. Despite the undisputed importance of resolution theorem,
it seems these results are still not as widely known as one would expect and, for
this reason, we thought useful to conduct
a thorough discussion about the content of the resolution theorems in \cite{danielMcquillan} and in \cite{helenajulio_RMS},
highlighting virtues and potential limitations.

In Section~\ref{Allsortsofseparatrices}, we consider the fundamental problem of existence of separatrices that, roughly
speaking, concerns the existence of germs of analytic sets invariant by (germs of) singular foliations. The discussion
is essentially conducted in $(\C^3,0)$. Expanding on the discussion of Section~\ref{core_dynamics}, we consider
the existence of separatrices for codimension~$1$ foliation spanned by two commuting vector fields and state
Theorem~\ref{teo_RR_separatrix} affirmatively answering this question. We also detail the general strategy for proving this theorem
which, in turn, emphasizes a few often overlooked points in resolution theorems for foliations as well as
the major role played by the general question of ``taming a core dynamics''. The second part of this section,
we review some results on the existence of separatrices for foliations of dimension~$1$. The nature of this second
problem is far more topological/geometric and ``core dynamics'' plays a much smaller role. Yet, some of the results
will find applications in the last section.

Finally, in Section~\ref{semicompletevectorfields} we discuss a particular class of singularities of foliations
of dimension~$1$, namely those supporting {\it semicomplete vector fields}. Albeit small in an appropriate sense,
this class of singularities
has rather distinguished properties and quite a few applications that make it worth studying. The section will precisely
begin with proper definitions and a general discussion of applications. Once the basic setting is in place, we will go on
to discuss two fundamental problems on the area: the first problem can vaguely be stated by asking {\it how wild
can the core dynamics be in this class of foliations}? The main results here stem from the seminal paper
of A. Guillot about Halphen vector fields and their role in ${\rm SL}\, (2,\C)$-actions, see \cite{guillot-IHES}.
The second problem aims at quantifying how ``degenerate'' a singularity in this class can be. This second problem
stems from a well known question raised long ago by E. Ghys and the topic has applications in the study of automorphism
groups of compact complex manifolds.

\medskip
\noindent
\textbf{Acknowledgment.} We are grateful to many of our colleagues for several discussion over the years.
A special thanks is due to D. Panazzolo who has helped us to understand many subtle points of his fundamental work
on desingularization theorems. We also thank J.-F. Mattei for many discussions and explanations about the vast and
fundamental work in singularity theory he has accomplished with his collaborators.

J. Rebelo and H. Reis were partially supported by CIMI through the project ``Complex dynamics of group actions, Halphen
and Painlev\'e systems''. H. Reis was also partially supported under the project ``Means and Extremes in Dynamical Systems''
with reference PTDC/MAT-PUR/4048/2021 and also by CMUP, member of LASI, which is financed by national funds through FCT
Funda\c{c}\~ao para a Ci\^encia e Tecnologia, I.P., under the project UIDB/00144/2020 as also supported .


\section{Basics in the local theory of foliations and the special case of dimension~$2$}\label{basics}

It is convenient to begin by recalling the definition of $1$-dimensional singular holomorphic foliation. First, let
$X = f_1 \partial /\partial x_1 + \cdots + f_n \partial /\partial x_n$ be a non-trivial holomorphic vector field defined
on an open set $V$ of $\C^n$. The singular set ${\rm Sing}\, (X)$ of $X$ is then given by $\bigcap_{i=1}^n \{ f_i =0\}$.
It is a (proper) analytic subset of $V$ and it is well known that ${\rm Sing}\, (X)$ has codimension~$1$ if and only if
the coordinate functions $f_i$ admit a non-trivial common factor.

We are then able to define singular holomorphic foliations as they will be considered throughout this work. Let $M$ be a
complex manifold and consider a covering $\{ (U_k, \varphi_k)\}$ of $M$ by coordinate charts. We denote by $n$ the dimension
of $M$ so that $\varphi_k (U_k)$ is an open set of $\C^n$.

\begin{defi}
	\label{1-dimensional_foliation_definition}
	Let $M$ and $\{ (U_k, \varphi_k)\}$ be as above. A singular holomorphic foliation $\fol$ of dimension~$1$ on $M$ consists of a collection
	of holomorphic vector fields $Y_k$ satisfying the following conditions:
	\begin{itemize}
		\item For every $k$, $Y_k$ is a holomorphic vector field defined on $\varphi_k (U_k) \subset \C^n$ whose
		singular set has codimension at least~$2$.
		
		\item Whenever $U_{k_1} \cap U_{k_2} \neq \emptyset$, we have
		$Y_{k_1} = g_{k_1 k_2} . (\varphi_{k_2} \circ \varphi_{k_1}^{-1})^{\ast} Y_{k_2}$ for
		some nowhere vanishing holomorphic function $g_{k_1 k_2}$.
	\end{itemize}
\end{defi}

The {\it singular set}\, ${\rm Sing}\, (\fol)$ of a foliation $\fol$ is then defined as the union over~$k$ of the sets $\varphi_k^{-1}
({\rm Sing}\, (Y_k)) \subset M$. Therefore the singular set of any holomorphic foliation has codimension at least two. In particular,
a foliation {\it has no divisor of zeros}.

Conversely, we say that a holomorphic vector field $Y$ is a {\it local representative}\, of the $1$-dimensional foliation $\fol$ if $Y$
is tangent to $\fol$ and the singular set of $Y$ has codimension at least~$2$. It is clear that representative vector fields are locally
unique up to multiplication by an invertible holomorphic function.

Analogously, we might also consider a differential $1$-form $\omega$ on $V \subseteq \C^n$, $\omega = g_1 dx_1 + \cdots + g_n dx_n$.
Again the singular set ${\rm Sing}\, (\omega)$ of $\omega$ is given by the intersection $\bigcap_{i=1}^n \{ g_i =0\}$ and it is an
analytic set of $V$ which has codimension~$1$ if and only if there is a non-trivial common factor for the functions $g_1 , \ldots ,
g_n$. Away from its singular points, the kernel of $\omega$ defines a distribution of complex hyperplanes on $V$. If in
Definition~\ref{1-dimensional_foliation_definition} we replace ``local vector fields'' by ``integrable local $1$-forms'',
we obtain the notion codimension~$1$ foliations. More precisely, we have:

\begin{defi}
	\label{codimension-1_foliation_definition}
	Let $M$ be a complex manifold and $\{ (U_k, \varphi_k)\}$ a covering of $M$ by coordinate charts.
A singular holomorphic foliation $\cald$ of codimension~$1$ on $M$ consists of a collection
of differential $1$-forms $\Omega_k$ satisfying the following conditions:
	\begin{itemize}
		\item For every $k$, $\Omega_k$ is a differential $1$-form defined on $\varphi_k (U_k) \subset \C^n$ with
		singular set of codimension at least~$2$ and such that $\Omega_k \wedge d\Omega_k$ vanishes identically.
		
		\item Whenever $U_{k_1} \cap U_{k_2} \neq \emptyset$, we have
		$\Omega_{k_1} = g_{k_1 k_2} . (\varphi_{k_2} \circ \varphi_{k_1}^{-1})^{\ast} \Omega_{k_2}$
		for some nowhere vanishing holomorphic function $g_{k_1 k_2}$.
	\end{itemize}
\end{defi}

In particular the singular set ${\rm Sing}\, (\cald)$ of $\cald$ again has codimension at least~$2$. The notion of {\it representative
$1$-form} for a codimension~$1$ foliation $\cald$ is defined analogously to the notion of representative vector fields in the case
of foliations with dimension~$1$.

Whereas our main focus here is on germs of foliations, or in slightly more concrete terms, on foliations defined on a neighborhood
of the origin in $\C^n$, the reader will notice that the global point of view considered in Definitions~\ref{1-dimensional_foliation_definition}
and~\ref{codimension-1_foliation_definition} is really indispensable to investigate the local structure of the singular point. Indeed,
globally defined foliations - and in particular the ``global dynamical phenomenon'' mentioned in the Introduction - will come to fore
in the context of birational theory of foliations which is needed, for example, if one seeks to ``resolve singular points''.

It is also convenient to complement the above definitions with a couple of comments.

\begin{obs}
{\rm Already on $(\C^n,0)$, $n \geq 3$, it is easy to see a first fundamental difference between foliations of dimension~$1$
and foliations of codimension~$1$ arising from the Frobenius condition. To formulate this, note that any choice of local
holomorphic functions $f_1, \ldots , f_n$ naturally gives rise to two (singular) distributions: one of lines and one of
hyperplanes. In fact, to the collection of functions $f_1, \ldots , f_n$, we may associate the vector field $Y = f_1
\partial /\partial x_1 + \cdots +f_n \partial /\partial x_n$ or the $1$-form $\Omega = f_1 dx_1 + \cdots + f_n dx_n$.
Whereas the local integral curves of $Y$ always yield a $1$-dimensional foliation $\fol$, the Frobenius condition for
$\Omega$ to yield a codimension~$1$ foliation is non-trivial and amounts to requiring the $3$-form $\Omega \wedge d\Omega$
to vanish identically which, in turn, leads to a highly non-trivial PDE system involving the functions $f_1, \ldots , f_n$.}
\end{obs}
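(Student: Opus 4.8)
The remark records two classical facts, and the plan is to establish each in turn, the only genuine work being careful bookkeeping with singular sets. \emph{The line field.} Given $f_1,\ldots,f_n$, one first passes to a representative with small singular set: let $g$ be a greatest common divisor of $f_1,\ldots,f_n$ in the local ring $\mathcal{O}_{\C^n,0}$, which is a unique factorization domain, write $f_i=g\,\widetilde{f}_i$, and set $\widetilde{Y}=\widetilde{f}_1\,\partial/\partial x_1+\cdots+\widetilde{f}_n\,\partial/\partial x_n$. Then $\{\widetilde{f}_1=\cdots=\widetilde{f}_n=0\}$ has codimension at least~$2$, so $\widetilde{Y}$ is a local representative in the sense of Definition~\ref{1-dimensional_foliation_definition}, and off the hypersurface $\{g=0\}$ its integral curves coincide, as unparametrised curves, with those of $Y$. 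Next, one observes that no integrability condition intervenes: for holomorphic functions $h,k$ the Leibniz identity gives $[h\widetilde{Y},k\widetilde{Y}]=\bigl(h\,\widetilde{Y}(k)-k\,\widetilde{Y}(h)\bigr)\widetilde{Y}$, so the rank-$1$ distribution spanned by $\widetilde{Y}$ is automatically involutive. Hence $Y$ always gives rise to a $1$-dimensional foliation $\fol$.

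\emph{The hyperplane field.} After the same reduction one may assume ${\rm Sing}\,(\Omega)$ has codimension at least~$2$; on its complement $\ker\Omega$ is a rank-$(n-1)$ holomorphic distribution, and here one invokes the Frobenius theorem rather than a bare computation: $\ker\Omega$ is integrable if and only if $\Omega\wedge d\Omega\equiv 0$, and by continuity this extends across the singular locus, so $\Omega\wedge d\Omega\equiv 0$ is precisely the obstruction — necessary \emph{and} sufficient — for $\cald$ to be a genuine codimension-$1$ foliation. This obstruction is vacuous exactly when $n=2$: then $\Omega\wedge d\Omega$ is a $3$-form and so vanishes identically on a surface, which is the analytic reason why on $(\C^2,0)$ every $1$-form with small singular set defines a foliation that is at once of dimension~$1$ and of codimension~$1$. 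For $n\geq 3$ one then makes the condition explicit: with $d\Omega=\sum_{i<j}\bigl(\partial f_j/\partial x_i-\partial f_i/\partial x_j\bigr)\,dx_i\wedge dx_j$ one finds
\begin{align*}
\Omega\wedge d\Omega &= \sum_{i<j<k} c_{ijk}\, dx_i\wedge dx_j\wedge dx_k, \\
c_{ijk} &= f_i\Bigl(\tfrac{\partial f_k}{\partial x_j}-\tfrac{\partial f_j}{\partial x_k}\Bigr)
- f_j\Bigl(\tfrac{\partial f_k}{\partial x_i}-\tfrac{\partial f_i}{\partial x_k}\Bigr)
+ f_k\Bigl(\tfrac{\partial f_j}{\partial x_i}-\tfrac{\partial f_i}{\partial x_j}\Bigr),
\end{align*}
so the Frobenius condition is the system $c_{ijk}=0$ of $\binom{n}{3}$ first-order partial differential equations, each quadratic in the $f_i$ and their first derivatives. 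For $n=3$ it reduces to the single scalar equation $c_{123}=0$, i.e. $\langle F,{\rm rot}\,F\rangle=0$ with $F=(f_1,f_2,f_3)$ — plainly a non-trivial constraint, in contrast with the complete freedom enjoyed by the $f_i$ when one forms the vector field $Y$.

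\emph{Main obstacle.} There is none of substance; the argument is entirely classical. Only two points call for a little care: (i) producing the representative with codimension-$\geq 2$ singular set, so that Definitions~\ref{1-dimensional_foliation_definition} and~\ref{codimension-1_foliation_definition} apply verbatim; and (ii) stressing that $\Omega\wedge d\Omega\equiv 0$ is not merely necessary but sufficient — that it is the \emph{whole} obstruction — which is exactly where the Frobenius theorem, as opposed to the expansion displayed above, enters.
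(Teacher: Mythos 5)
Your writeup is correct and follows exactly the line the paper intends: the remark is stated without proof, and what you supply — automatic involutivity of a rank-$1$ distribution, the Frobenius theorem for $\ker\Omega$ away from the (codimension-$\geq 2$) singular set, and the explicit expansion of $\Omega\wedge d\Omega$ into the $\binom{n}{3}$ quadratic first-order equations $c_{ijk}=0$ (reducing to $\langle F,{\rm rot}\,F\rangle=0$ for $n=3$, and vacuous for $n=2$) — is precisely the classical justification the authors are alluding to. No gaps; the only optional refinement is the GCD reduction, which the remark itself does not insist on but which correctly aligns the construction with Definitions~\ref{1-dimensional_foliation_definition} and~\ref{codimension-1_foliation_definition}.
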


\begin{obs}
{\rm In general, foliations of dimension~$1$ are very abundant, at least in algebraic manifolds, and they may have an
extremely complicated dynamical behavior, more on this in Section~\ref{core_dynamics}, see also \cite{classicalilyashenko},
\cite{lorayandjulio}. This contrasts with the case of codimension~$1$ foliations that are far more rigid and in several cases
amenable to classification, at least at conjectural level, all codimension~$1$ foliations on, say, $\CP^n$ should
be transversely homogeneous or can be obtained as a suitable pull-back of a foliaion defined on a surface. For an interesting
discussion of several of global aspects of codimension~$1$ foliations, we refer the reader to \cite{touzet}.}
\end{obs}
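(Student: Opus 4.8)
Since the statement to be justified is a remark that bundles together assertions of quite different logical status --- some are theorems, one is a famous conjecture, and the rest are heuristic --- the plan is to substantiate each in turn while making explicit which is which. First I would make ``abundant'' precise: by Definition~\ref{1-dimensional_foliation_definition}, a $1$-dimensional foliation on a complex manifold $M$ is exactly a saturated rank-$1$ subsheaf of the tangent sheaf $TM$, equivalently --- away from a subset of codimension $\geq 2$ --- a nonzero section of $TM \otimes \mathcal{L}$ for some line bundle $\mathcal{L}$, the requirement that the zero set of the section have codimension $\geq 2$ being an open and dense condition. When $M$ is projective and $\mathcal{L}$ is sufficiently ample, $h^0(M, TM \otimes \mathcal{L})$ can be made arbitrarily large, so a dimension count produces positive- (indeed rapidly growing) dimensional families; for $M = \CP^n$ this is completely explicit, the foliations of a fixed degree $d$ forming an irreducible projective variety whose dimension grows polynomially in $d$.

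For the claim that such foliations may exhibit extremely complicated dynamics I would not look for a general theorem but rather display families, the structural point being that a $1$-dimensional foliation is subject to no integrability constraint, so its holonomy pseudogroup can essentially realize any finitely generated pseudogroup of germs of biholomorphisms of $(\C^{n-1},0)$. Concrete incarnations I would cite are linear foliations on $\CP^n$ with multiplicatively independent eigenvalue ratios (dense, non-linearizable leaves), suspensions of representations of surface groups into $\psl$, Riccati-type foliations, and the examples recorded in \cite{classicalilyashenko} and \cite{lorayandjulio} (leaves with exotic closures, exceptional minimal sets, ergodicity and rigidity) --- this being precisely the ``core dynamics'' thread resumed in Section~\ref{core_dynamics}.

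For the contrasting rigidity of codimension~$1$ foliations I would invoke the Frobenius constraint $\Omega \wedge d\Omega \equiv 0$ highlighted in the preceding remark, which confines such foliations to a proper subvariety of the space of hyperplane fields, together with the structural results available in that regime: Malgrange's singular Frobenius theorem, the description of the low-degree irreducible components of the space of codimension~$1$ foliations on $\CP^n$ (rational pull-back and logarithmic components), and global stability theorems of Reeb type. The genuinely conjectural clause --- that every codimension~$1$ foliation on $\CP^n$ is transversely homogeneous or a pull-back of a foliation on a surface --- I can only ``prove'' by surveying the supporting evidence: the low-degree classifications and Touzet's structure theory for the transversely homogeneous case \cite{touzet}. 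The hard part, and the honest obstacle, lies exactly here: there is no known mechanism forcing a general codimension~$1$ foliation into one of these two families, so the assertion must be read as a conjecture rather than a theorem, and the ``proof'' of the remark is in the end a guided passage through the literature with the theorem/conjecture boundary drawn explicitly.
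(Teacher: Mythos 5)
This statement is a \emph{Remark} with no proof in the paper: the authors simply assert it and point to \cite{classicalilyashenko}, \cite{lorayandjulio}, \cite{touzet} and to Section~\ref{core_dynamics}, where the abundance and wild dynamics of $1$-dimensional foliations are later made precise via Theorem~\ref{FrankandJulio}. Your elaboration is consistent with that intent and takes essentially the same route --- a dimension count for abundance, the Loray--Rebelo examples for complicated dynamics, the Frobenius constraint for rigidity --- and you are right to flag that the classification of codimension~$1$ foliations on $\CP^n$ is only conjectural, so nothing further is needed.
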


A basic object in the local theory of foliations that has largely motivated its early development is the notion of
{\it separatrix}. Although the definition of {\it separatrix}\, depends on the dimension of the foliation, the cases
of foliations of dimension~$1$ and of codimension~$1$ can naturally be formulated together.

\begin{defi}\label{definition_separatrix}
Let $\fol$ (resp. $\cald$) be a foliation of dimension~$1$ (resp. codimension~$1$) on $(\C^n , 0)$. A separatrix $S$ for $\fol$
(resp. $\cald$) is the germ of an irreducible analytic set of dimension~$1$ (resp. codimension~$1$) containing $0 \in \C^n$ and
invariant by $\fol$ (resp. $\cald$).
\end{defi}

Separatrices are objects of natural interest since they fit the framework of ``invariant manifolds'' in dynamical systems. In
particular, their presence may enable one to reduce the dimension of the phase space of the system in question. Yet, in the local
theory of foliations as discussed here, the notion of separatrix first appeared in the classical work of Briot and Bouquet
\cite{briotbouquet} where it was claimed that every foliation on $(\C^2,0)$ admits at least one separatrix. Much later, R.
Thom has sought to generalize the existence of separatrices for every codimension~$1$ foliation on $(\C^3,0)$. The first
example of a codimension~$1$ foliation on $(\C^3,0)$ without separatrix was, however, found by J.-P. Jouanolou \cite{jouanolou}.
As it will be seen in the next section, Jouanolou's counterexample relies on the core dynamics of certain $1$-dimensional foliations
on $(\C^3,0)$. Let us also point out that a gap in the arguments of Briot and Bouquet was later found and the existence
of separatrices for foliations on $(\C^2,0)$ was firmly established by Camacho and Sad in \cite{camachosad}.

Another fundamental notion in the theory of singularities of foliations is the notion of {\it eigenvalues of a foliation at
a singular point}. To abridge the discussion, for the time being let us restrict ourselves to the case of {\it foliations of
dimension~$1$} (see Section~\ref{resolution_blowups} for a more general discussion). Without loss of generality, it suffices to
consider the case of foliations $\fol$ of dimension~$1$ defined on $(\C^n,0)$. Given $\fol$ as above, up to reducing the neighborhood
of the origin, there is a holomorphic vector field $Y$ whose zero-set has codimension at least~$2$ and such that $\fol$ is nothing but
the foliation induced by the local orbits of $Y$. As mentioned, $Y$ is said to be a representative of $\fol$ and, while $Y$ is not
unique, two representative vector fields for the same foliation $\fol$ differ by multiplication by an invertible holomorphic function.

\begin{defi}
Let $\fol$ be a $1$-dimensional holomorphic foliation on $(\C^n ,0)$ and let $Y$ denote a representative vector field for $\fol$.
Assume that $\fol$ is singular at the origin, i.e., $Y(0) =0$. Then
the eigenvalues of $\fol$ at $0 \in \C^n$ are the eigenvalues of the Jacobian matrix $D_0Y$.
\end{defi}

Since $Y$ is well defined up to
multiplication by an invertible holomorphic function, there follows that the eigenvalues of $\fol$ at $0 \in \C^n$ are well defined
only up to simultaneous multiplication by a non-zero constant.

\subsection{Blow-ups and dicritical singularities} Blow-ups are a standard tool to produce non-trivial birational maps and to
understand the local structures of singular points, whether these are ``singularities of the ambient space'' or ``singularities
of a foliation on a smooth space''. The transform of a foliation under a blow-up map is called the {\it blow-up}\, of the foliation.
The blown-up space, however, contains an exceptional divisor which may or may not be invariant by the transformed foliation. This
issue gives rise to the notion of {\it dicritical foliation at a singular point}.

\begin{defi}
	\label{dicritical_foliation}
Let $M$ be a complex manifold equipped with a holomorphic foliation $\mathcal{H}$ and consider a blow-up map $\pi \, : \widetilde{M}
\rightarrow M$ centered at $C \subset {\rm Sing} (\mathcal{H})$, where ${\rm Sing} (\mathcal{H})$ stands for the singular set of
$\mathcal{H}$. The foliation $\mathcal{H}$ is said to be dicritical with respect to $\pi$ if its corresponding blow-up
$\widetilde{\mathcal{H}}$ does not leave the exceptional divisor $\pi^{-1} (C)$ invariant.
\end{defi}

Whenever no misunderstanding is possible, we will simply say that a given foliation is, or is not, dicritical without specifically mentioning
to the blow-up map. Also, for most of our discussion, it will suffice to consider blow-ups centered at single points (sometimes called
one-point blow-ups). For this type of blow-up, the characterization of $1$-dimensional dicritical foliations is very simple. More precisely,
let $\fol$ be a $1$-dimensional foliation on $(\C^n ,0)$ and fix a representative vector field $Y$ of $\fol$. Denote by $Y_k$ the
non-zero homogeneous component of least degree in the Taylor series of $Y$ based at $0 \in \C^n$. Then, we have:

\begin{lemma}
	\label{lemma1_dicritical}
The foliation $\fol$ is dicritical with respect to the blow-up centered at $0 \in \C^n$ if and only if $Y_k$ is a multiple
of the radial vector field $R = x_1 \partial /\partial x_1 + \cdots + 	x_n \partial /\partial x_n$.
\end{lemma}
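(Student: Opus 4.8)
The plan is to reduce everything to an explicit computation of the blow-up of a representative vector field in a single affine chart of the exceptional divisor, and to read off invariance of that divisor directly from the leading terms. Fix a representative $Y$ of $\fol$ and expand $Y = \sum_{j \geq k} Y_j$ into homogeneous components at $0$, writing $Y_j = \sum_{i} P_{i,j}\,\partial/\partial x_i$ with $P_{i,j}$ homogeneous of degree $j$ and $Y_k \neq 0$ (if $\fol$ is singular at $0$ then $k \geq 1$; the case $Y(0)\neq 0$, $k=0$, is immediate, since a nonzero constant vector field is never a multiple of $R$ and its one-point blow-up is plainly dicritical). Let $\pi\colon \widetilde{M} \to (\C^n,0)$ be the one-point blow-up, with exceptional divisor $E = \pi^{-1}(0) \simeq \CP^{n-1}$, and work in the chart $(x_1,t_2,\dots,t_n)\mapsto(x_1,\,x_1t_2,\,\dots,\,x_1t_n)$, in which $E = \{x_1 = 0\}$. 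A direct computation from the defining relation $d\pi\,\widetilde Y = Y\circ\pi$ of the lifted field $\widetilde Y$ gives, with $t=(t_2,\dots,t_n)$,
\begin{align*}
\widetilde Y(x_1) &= \sum_{j \geq k} x_1^{\,j}\, P_{1,j}(1,t), \\
\widetilde Y(t_i) &= \sum_{j \geq k} x_1^{\,j-1}\bigl(P_{i,j}(1,t) - t_i\, P_{1,j}(1,t)\bigr), \qquad i \geq 2.
\end{align*}
Hence every coefficient of $\widetilde Y$ is divisible by $x_1^{\,k-1}$, and the blow-up $\widetilde{\fol}$ of $\fol$ is represented near $E$ by $\widetilde Y$ divided by the maximal common factor of its coefficients.

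The first genuinely substantive step is an algebraic lemma: the functions $P_{i,k}(1,t) - t_i\,P_{1,k}(1,t)$ vanish identically for all $i$ if and only if $Y_k$ is a multiple of $R$. After clearing $x_1$, the vanishing condition reads $x_1 P_{i,k} = x_i P_{1,k}$ for all $i$, i.e.\ $Y_k \wedge R = 0$; if this holds, coprimality of the $x_i$ forces $x_1 \mid P_{1,k}$, and with $Q := P_{1,k}/x_1$ (homogeneous of degree $k-1$) one gets $P_{i,k} = x_i Q$, that is $Y_k = Q\,R$; conversely $Y_k = Q\,R$ yields $P_{i,k}(1,t)=t_i\,Q(1,t)$ and the brackets vanish. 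Note that $P_{1,k}\equiv 0$ is excluded here, as it would drag all $P_{i,k}$ to zero against $Y_k\neq 0$; in particular, when $Y_k = Q\,R$ the polynomial $Q$ is nonzero, so $Q(1,t)\not\equiv 0$.

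With this in hand the dichotomy is transparent. If $Y_k$ is \emph{not} a multiple of $R$, then some bracket $P_{i_0,k}(1,t) - t_{i_0}P_{1,k}(1,t)$ is not identically zero, so the $t_{i_0}$-coefficient of $x_1^{-(k-1)}\widetilde Y$ is nonzero along $E$; hence $x_1^{\,k-1}$ is exactly the largest power of $x_1$ dividing all coefficients of $\widetilde Y$, and the representative of $\widetilde{\fol}$ near $E$ equals $x_1^{-(k-1)}\widetilde Y$ up to a factor prime to $x_1$. Its $x_1$-component is $x_1^{-(k-1)}\widetilde Y(x_1) = x_1\bigl(P_{1,k}(1,t)+O(x_1)\bigr)$, divisible by $x_1$, so the representative is tangent to $\{x_1=0\}$: thus $E$ is invariant and $\fol$ is non-dicritical. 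If instead $Y_k = Q\,R$, all brackets vanish, so \emph{every} coefficient of $\widetilde Y$ is divisible by $x_1^{\,k}$ (and $\widetilde Y(x_1)$ by no higher power, since $Q(1,t)\not\equiv 0$); dividing by $x_1^{\,k}$ produces $x_1$-component $Q(1,t) + O(x_1)$, not divisible by $x_1$. Hence the representative of $\widetilde{\fol}$ near $E$ is transverse to $E$ along a dense open set, so $E$ is not invariant and $\fol$ is dicritical. As the two cases are exhaustive, the equivalence follows.

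I do not anticipate a serious obstacle. The only points needing a little care are: (i) the bookkeeping of powers of $x_1$ when passing from $\widetilde Y$ to an honest representative of $\widetilde{\fol}$ — one must divide by the \emph{maximal} power of $x_1$, and then observe that any residual common factor is automatically coprime to $x_1$ and hence irrelevant to whether the field is tangent to $\{x_1=0\}$; and (ii) the reduction to one chart, which is legitimate because both the hypothesis ``$Y_k$ is a multiple of $R$'' and the conclusion ``$E$ is invariant'' are coordinate-symmetric, and invariance of the irreducible divisor $E$ is detected on any dense open subset of it, such as the part of $E$ visible in the chosen chart.
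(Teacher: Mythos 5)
Your proof is correct and is exactly the computation that the paper's one-line proof alludes to (pull back $Y$ in the chart $\pi(x_1,u_2,\ldots,u_n)=(x_1,x_1u_2,\ldots,x_1u_n)$ and compare leading terms); you have simply written out in full the details the paper delegates to the references. One caveat: your parenthetical treatment of the case $k=0$ is wrong. The blow-up of a foliation at a \emph{regular} point is non-dicritical, not ``plainly dicritical'': for $Y=\partial/\partial x_1$ your own formulas give $\widetilde Y=\partial/\partial x_1-x_1^{-1}\sum_i t_i\,\partial/\partial t_i$, whose holomorphic representative $x_1\,\partial/\partial x_1-\sum_i t_i\,\partial/\partial t_i$ is tangent to $\{x_1=0\}$, so $E$ is invariant. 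As stated, your remark would actually contradict the lemma (a $Y_0$ not proportional to $R$ yet allegedly dicritical). Fortunately this case lies outside the scope of the statement, since Definition~\ref{dicritical_foliation} only defines dicriticality for blow-ups centered in the singular set, so $k\geq 1$ throughout; your argument for $k\geq 1$ is complete and correct.
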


\begin{proof}
	It suffices to compute the pull-back of $Y$ in the coordinates $(x_1, u_2, \ldots ,u_n)$ for the blow-up of $\C^n$ where
	the blow-up map $\pi$ is given by $\pi (x_1, u_2, \ldots ,u_n) = (x_1, x_1 u_2, \ldots , x_1u_n)$, cf. for example \cite{IY}
	or \cite{lectureNotes_RR}.
\end{proof}

In more general terms, a foliation $\fol$ is said to be {\it dicritical}\, at a center $C$ if there exists a sequence
of blow-ups beginning at $C$ and leading to a foliation which does not leave all the irreducible components of the global
exceptional divisor invariant (for details see Section~\ref{resolution_blowups}). Since a blow-up map is proper, and therefore
so is a composition of blow-up maps, there follows from Remmert's theorem that a leaf of the foliation $\widetilde{\fol}$ transverse
to (a component of) the exceptional divisor must project to a separatrix for the initial foliation $\fol$. Thus we obtain the following
simple characteristic of $1$-dimensional dicritical foliations:

\begin{lemma}
	\label{lemma2_dicritical}
	If the foliation $\fol$ at the center $C$ is dicritical, then the union of separatrices of $\fol$ through points of $C$
	yields a set with non-empty interior.\qed
\end{lemma}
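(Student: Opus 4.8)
The plan is to translate the definition of a dicritical center into a statement about a leaf of $\widetilde{\fol}$ that is transverse to the global exceptional divisor, and then push that leaf down via the (proper) composition of blow-up maps. Concretely, suppose $\fol$ is dicritical at the center $C$, and let $\pi = \pi_1 \circ \cdots \circ \pi_r \colon \widetilde{M} \to M$ be a composition of blow-ups, beginning with a blow-up centered at $C$, for which the transformed foliation $\widetilde{\fol}$ does not leave all irreducible components of the total exceptional divisor $E = \pi^{-1}(C)$ invariant. Pick an irreducible component $E_0 \subseteq E$ that is \emph{not} invariant by $\widetilde{\fol}$. Since $\widetilde{\fol}$ has dimension~$1$, at a generic (smooth, non-singular for $\widetilde{\fol}$) point $p$ of $E_0$ the leaf $L_p$ of $\widetilde{\fol}$ through $p$ is transverse to $E_0$, so $L_p \cap E_0 = \{p\}$ locally and $L_p$ is not contained in $E$.

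**Next I would** analyze the image of such a transverse leaf under $\pi$. The key point is that $\pi(L_p)$ passes through the point $\pi(p) \in C$: indeed, since the first blow-up in the sequence is centered at $C$, every point of the first exceptional divisor maps into $C$, and the subsequent blow-ups are centered inside the successive total transforms, so $\pi(E) \subseteq C$, hence $\pi(p) \in C$. Because $\pi$ is proper (a finite composition of proper maps), Remmert's proper mapping theorem guarantees that the image of a small piece of the analytic leaf $L_p$ through $p$ is an analytic subset of a neighborhood of $\pi(p)$ in $M$, of dimension~$1$. This image is invariant by $\fol$ (away from the exceptional set $\pi$ is a biholomorphism carrying $\widetilde{\fol}$ to $\fol$), contains $\pi(p)\in C$, and since it is $1$-dimensional it is not contained in the exceptional set — i.e., it genuinely ``descends'' rather than collapsing. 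Shrinking and taking an irreducible component through $\pi(p)$, one obtains a separatrix of $\fol$ through the point $\pi(p)$ of $C$.

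**The remaining step** is the ``non-empty interior'' conclusion. As $p$ ranges over the (open, dense) set of generic points of $E_0$, the base points $\pi(p)$ range over a subset of $C$ whose image under $\pi$ fills up $\pi(E_0)$, and the corresponding transverse leaves $L_p$, together with $E_0$, locally fill up a neighborhood of $p$ in $\widetilde{M}$ — because transversality of $L_p$ to the hypersurface $E_0$ means the union $\bigcup_p L_p$ contains a neighborhood of a generic point of $E_0$, hence has non-empty interior in $\widetilde{M}$. Applying $\pi$ and using that $\pi$ is a biholomorphism off the exceptional divisor, the union of the images $\pi(L_p)$ — each of which is a separatrix of $\fol$ through a point of $C$ — contains the $\pi$-image of an open set not meeting the exceptional divisor, which is open in $M$. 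Therefore the union of separatrices of $\fol$ through points of $C$ has non-empty interior.

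**The main obstacle** I anticipate is making the passage from ``one transverse leaf descends to one separatrix'' to ``the union of separatrices has non-empty interior'' fully rigorous: one must be careful that the transversality to $E_0$ holds on an open dense subset of $E_0$ (this uses that $\widetilde{\fol}$ is $1$-dimensional and that $E_0$ is \emph{not} $\widetilde{\fol}$-invariant, so the tangency locus is a proper analytic subset of $E_0$), and that the local saturation of such an open dense piece of $E_0$ by leaves of $\widetilde{\fol}$ is genuinely open in $\widetilde{M}$, dimensions adding up as $\dim E_0 + \dim(\text{leaf}) = (n-1) + 1 = n$. Everything else — properness of $\pi$, the invariance of images, Remmert's theorem, the biholomorphism off the divisor — is standard and already invoked in the surrounding text.
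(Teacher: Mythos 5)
Your argument is correct and is essentially the same as the one the paper sketches in the sentence immediately preceding the lemma: a leaf of $\widetilde{\fol}$ transverse to a non-invariant component of the exceptional divisor projects, via properness of the blow-up composition and Remmert's theorem, to a separatrix through a point of $C$, and the saturation of a generic open piece of that hypersurface by such transverse leaves is open. Your write-up merely makes explicit the transversality-on-a-dense-open-set and dimension-count details that the paper leaves implicit.
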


The converse to Lemma~\ref{lemma2_dicritical} is known to hold for ambient spaces of dimension up to~$3$, and it is a simple
consequence of ``resolution theorems'', cf. Section~\ref{resolution_blowups}. Whereas the result is likely to hold in general,
a proof of this statement dispensing with ``resolution'' results seems to be still lacking in the literature.

The above lemmas show that $1$-dimensional dicritical foliations are, somehow, {\it very special}. In particular,
a ``generic foliation'' is not dicritical at their singular points. Also, owing to Lemma~\ref{lemma2_dicritical},
for most of the problems discussed here involving $1$-dimensional foliations, we can assume without loss of generality
that the foliation in question is not dicritical.

\begin{remark}
{\rm Examples of dicritical foliations are far more abundant when we consider {\it codimension~$1$} foliation in ambient spaces of
{\it dimension~$3$},. In particular,
it is unclear if there is any reasonable sense in claiming that a ``generic foliation'' is not dicritical. In fact, a good source
of examples of dicritical foliations consists of exploiting the affine Lie algebra generated by a homogeneous polynomial
vector field and by the radial vector field, see Section~\ref{core_dynamics}.}
\end{remark}

\subsection{Singularities of foliations on $(\C^2,0)$}\label{discussingdimension2}
As mentioned, singularities of foliations on $(\C^2, 0)$ are the object of a highly
developed theory, at least in the very general setting of {\it non-dicritical foliations}. In this paragraph, we shall collect some
reasons that allowed so much progress in this topic and compare them with the situation of foliations on $(\C^3, 0)$.

\bigbreak

\noindent {\bf (A) Seidenberg theorem}. It is commonly accepted that no general theorem in singularity theory can be proved
without relying on a suitable ``desingularization theorem''. In the theory of foliations, however, it is not possible in general
to actually {\it desingularize a foliation}, i.e., to obtain a non-singular model of the foliation up to
birational transformations. In fact,
whereas the phrase {\it desingularization theorem} is sometimes used as an abuse of language, a more accurate terminology would
be {\it reduction of singularities theorem}. In other words, rather than looking for a non-singular foliation, we look for a foliation
whose singular points are as ``well behaved as possible''. Typically, we will look for a foliation all of whose singular points are
{\it elementary}, i.e., all of them have {\it at least one eigenvalue different from zero}.

Seidenberg theorem \cite{seidenberg} provides a suitable procedure to reduce the singularities of holomorphic foliations
on $(\C^2, 0)$. Let $\fol$ denote a singular holomorphic foliation defined
on a neighborhood of $(0,0) \in \C^2$.
Seidenberg theorem asserts the existence of a finite sequence of
blow-up maps, along with transformed foliations $\fol_i$ ($i=1, \ldots, n$)
$$
\fol=\fol_0 \stackrel{\pi_1}\longleftarrow \fol_1 \stackrel{\pi_2}\longleftarrow \cdots
\stackrel{\pi_n}\longleftarrow \fol_n
$$
such that the following holds:
\begin{itemize}
	\item Each blow-up map $\pi_i$ ($i=1, \ldots, n$) is centered at a singular point of $\fol_{i-1}$.
	
	\item All singular points of $\fol_n$ are elementary, i.e., the foliation $\fol_n$ possesses at least one eigenvalue
	different from zero at each of them.
\end{itemize}

Denote by $D_1, \ldots , D_n$ the irreducible components of the total exceptional divisor associated with $\fol_n$.
Each $D_i$ is therefore a rational curve with strictly negative self-intersection and the corresponding Dynkin diagram is
a tree.

\bigbreak

\noindent {\bf (B) A global pseudogroup - Mattei-Moussu technique}.
Assume next that the foliation $\fol$ is not dicritical. Then, for each $i=1, \ldots, n$, $D_i \setminus {\rm Sing}\, (\fol_n)$
is a regular leaf of $\fol_n$, where ${\rm Sing}\, (\fol_n)$ stands for the singular set of $\fol_n$. In particular, all non-trivial
dynamics associated with the foliation $\fol_n$ is of transverse nature. Moreover this transverse dynamics naturally arises from the
holonomy representations of each of the leaves $D_i \setminus {\rm Sing}\, (\fol_n)$, $i=1, \ldots, n$. In turn, at least to a
considerable extent, the dynamics of these representations can be merged together through the argument of ``passage of corners''
(a.k.a. ``Dulac transform''), whenever $D_i \cap D_j \neq \emptyset$.

The preceding can be summarized by saying that all singular points of $\fol_n$ are ``dynamically connected''
in the sense that their local dynamics blend together in a nice pseudogroup of maps of $(\C,0)$. Furthermore,
the dynamics of this pseudogroup encodes virtually
all the information on the local structure of the initial foliation $\fol$.

The method described above to investigate the singularities of foliations on $(\C^2, 0)$ was very much set up in the seminal
paper by Mattei and Moussu \cite{mattei-moussu}. This technique has proven time and again to be extremely effective in a
variety of situations in dimension~$2$, see \cite{camachosad} and \cite{Guillot-Rebelo} for two examples of
problems whose solutions have involved this type of setup. In the next section we will discuss how far this approach can be
generalized to higher dimensions.

\bigbreak

\noindent {\bf (C) Dynamics of pseudogroups acting on $(\C,0)$}. Although for many problems this issue plays a relatively minor role,
let us still point out that the dynamics of pseudogroups acting on $(\C,0)$ is itself a highly developed topic. This type of dynamics was
first investigate by Huddai-Verenov \cite{Hudaiverenenov} and then by Il'yashenko in \cite{classicalilyashenko} where a ``generic situation''
of groups generated by hyperbolic diffeomorphisms was considered. In contrast, in \cite{mattei-moussu}, the authors have dealt with subgroups
all of whose orbits are finite. An absolute breakthrough then came with the works of Shcherbakov and of Nakai about general non-solvable
subgroups, see \cite{Sh1}, \cite{Sh2}, \cite{Na}. The reader may consult
\cite{Rebelo_ETDS} and references therein for a more complete
account of these dynamics in the non-solvable case whereas solvable pseudogroups are discussed in detail in \cite{rus}.

\begin{remark}
It should be pointed out that much progress in terms of construction of moduli spaces
for foliations on $(\C^2,0)$ and in describing the topology of leaves has
been made in recent years, chiefly by Mar\'{\i}n, Mattei, and Salem. While these aspects will not be discussed in this survey
which is mostly devoted to higher dimensional situations. Yet, the reader interested in the topology of leaves
will find more up-to-date information in \cite{mattei-1}, \cite{moremattei}, and \cite{Loic}. As to the construction
of moduli spaces, we refer to \cite{mattei-2} and to the preprints \cite{matteipreprints} and \cite{matteipreprintsagain}.
\end{remark}

\section{Splitting the problem: core dynamics and resolution}\label{core_dynamics}

The main object of this section are $1$-dimensional foliations defined around the origin of $\C^n$,
for $n \geq 3$. Yet, most of the discussion can be conducted without loss of generality in the case $n=3$.

It is useful to begin by recalling some well known facts about foliations on complex projective spaces. Let $\CP^n$ be viewed
as the space of radial lines through the origin of $\C^{n+1}$ and denote by $\Pi : \C^{n+1} \setminus \{ 0\} \rightarrow \CP^n$
the canonical projection. Also, for $\lambda \in \C^{\ast}$, denote by $h_{\lambda} : \C^{n+1}
\rightarrow \C^{n+1}$ the homothety defined by $h_{\lambda} (x_1, \ldots ,x_{n+1}) = (\lambda x_1, \ldots , \lambda x_{n+1})$.
Finally let $R$ denote the radial vector field $R = x_1 \partial /\partial x_1 + \cdots + x_{n+1} \partial /\partial x_{n+1}$ and
consider a homogeneous polynomial vector field
$$
X = P_1 \frac{\partial}{\partial x_1} + \cdots + P_{n+1} \frac{\partial}{\partial x_{n+1}}
$$
of degree~$d$ on $\C^{n+1}$. In other words, each $P_i$ is a degree~$d$ homogeneous polynomial, for every $i=1, \ldots ,n+1$. In
what follows $X$ is always assumed to satisfy the following conditions:
\begin{itemize}
	\item[(1)] The singular set of $X$ on $\C^{n+1}$ has codimension at least~$2$.
	
	\item[(2)] The vector fields $X$ and $R$ are linearly independent at generic points.
\end{itemize}
Next note that we have
$$
h_{\lambda}^{\ast} X = \lambda^{d-1} X
$$
so that the vector fields $h_{\lambda}^{\ast} X$ and $X$ are everywhere parallel for any fixed value of $\lambda \in \C^{\ast}$.
In particular, if $p \in \C^{n+1}$ is a point at which $X(p)$ and $R(p)$ are linearly independent, then $X(p)$ induces a direction
in $T_{q=\Pi (p)} \CP^n$ which is well defined in the sense that it does not depend on $p \in \Pi^{-1} (q)$. From this, it easily
follows that $X$ induces a singular holomorphic foliation $\fol$ on $\CP^n$ in the sense of Definition~\ref{1-dimensional_foliation_definition}.
A standard application of Serre's GAGA principle yields a type of converse for the above construction, namely
the following proposition holds, cf. for example \cite{IY}, \cite{lectureNotes_RR}.

\begin{prop}
\label{Naturefoliations_CPn}
Let $\fol$ denote a singular holomorphic foliation on $\CP^n$. Then, there exists a homogeneous polynomial vector field $X$
on $\C^{n+1}$ having singular set of codimension at least~$2$ and inducing the foliation $\fol$ on $\CP^n$ by means of the
above described construction.\qed
\end{prop}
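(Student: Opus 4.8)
The plan is to reverse the construction preceding the statement. A singular holomorphic foliation $\fol$ on $\CP^n$ is given by local vector fields $Y_k$ on charts $U_k$, transforming into one another by nowhere-vanishing holomorphic factors; equivalently, $\fol$ corresponds to a line subsheaf $L \hookrightarrow T\CP^n$, i.e. an invertible sheaf $L$ together with a morphism $L \to T\CP^n$ that is injective as a sheaf map (the saturation condition guaranteeing that the cokernel has no divisorial torsion is exactly the requirement that $\fol$ has singular set of codimension $\geq 2$). Since $\mathrm{Pic}(\CP^n) = \Z$, one has $L = \mathcal{O}(1-d)$ for some integer $d$, and the defining morphism becomes a global section of $T\CP^n \otimes \mathcal{O}(d-1) = \mathcal{H}om(\mathcal{O}(1-d), T\CP^n)$; the integer $d$ so obtained is the \emph{degree} of the foliation and is $\geq 0$ under hypothesis (2) (otherwise $\fol$ would be pulled back from a point, which is impossible once $n\geq 1$). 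The first step, then, is to identify $H^0(\CP^n, T\CP^n \otimes \mathcal{O}(d-1))$ with the space of homogeneous vector fields of degree $d$ on $\C^{n+1}$ modulo multiples of $R$.

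To make that identification explicit I would invoke the Euler sequence
\[
0 \longrightarrow \mathcal{O}_{\CP^n} \longrightarrow \mathcal{O}_{\CP^n}(1)^{\oplus (n+1)} \longrightarrow T\CP^n \longrightarrow 0,
\]
in which the first map sends $1$ to the class of the radial field $R$. Twisting by $\mathcal{O}(d-1)$ and taking the long exact sequence in cohomology gives
\[
0 \to H^0(\mathcal{O}(d-1)) \to H^0(\mathcal{O}(d))^{\oplus(n+1)} \to H^0(T\CP^n\otimes\mathcal{O}(d-1)) \to H^1(\mathcal{O}(d-1)).
\]
Here $H^0(\mathcal{O}(d))^{\oplus(n+1)}$ is precisely the space of $(n+1)$-tuples $(P_1,\dots,P_{n+1})$ of homogeneous degree-$d$ polynomials, i.e. homogeneous degree-$d$ vector fields $X = \sum P_i\,\partial/\partial x_i$ on $\C^{n+1}$, and the subspace $H^0(\mathcal{O}(d-1))$ that gets killed consists exactly of the multiples $g\cdot R$ with $g$ homogeneous of degree $d-1$. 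Since $H^1(\CP^n, \mathcal{O}(d-1)) = 0$ for $n \geq 2$ (Bott vanishing / direct \v{C}ech computation), the arrow onto $H^0(T\CP^n\otimes\mathcal{O}(d-1))$ is \emph{surjective}. This is the Serre–GAGA input the statement alludes to: a holomorphic (hence, by GAGA, algebraic) global section of the twisted tangent bundle lifts to an algebraic, i.e. polynomial, homogeneous vector field $X$ on the affine cone. Unwinding the Euler map shows that the direction $X(p)$ at a point $p$ with $X(p),R(p)$ independent projects to the direction of $\fol$ at $\Pi(p)$, which is the asserted compatibility.

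It remains to arrange that $X$ itself — not merely $\fol$ — has singular set of codimension $\geq 2$ on $\C^{n+1}$. The component $(P_1,\dots,P_{n+1})$ produced above is only well defined modulo $R$, so I would use this freedom: if the $P_i$ share a nontrivial common factor $q$, then because $\fol$ has codimension-$\geq 2$ singular set the zero divisor $\{q=0\}$ cannot be $\fol$-describing, and subtracting a suitable multiple $g\cdot R$ (choosing $g$ to cancel the part of the common factor coming from the ``radial direction'') strictly lowers the common factor; iterating, after finitely many steps one reaches a representative with no common factor, hence with $\mathrm{Sing}(X)$ of codimension $\geq 2$. The main obstacle, and the point deserving the most care, is precisely this last normalization step together with checking that the lift remains genuinely $n{+}1$-dimensional, i.e. that $X$ is not itself a multiple of $R$ — which is guaranteed by hypothesis (2), since $\fol$ is a bona fide foliation on $\CP^n$ and not the (empty) ``foliation'' cut out by $R$ alone. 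The cohomological vanishing $H^1(\CP^n,\mathcal{O}(d-1))=0$ and the algebraicity supplied by GAGA are routine; the bookkeeping of common factors modulo $R$ is where one must be attentive.
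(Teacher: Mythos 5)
The paper itself gives no proof of Proposition~\ref{Naturefoliations_CPn}: it is stated with a \qed and a pointer to Serre's GAGA principle and to \cite{IY}, \cite{lectureNotes_RR}. Your argument is exactly the standard one being alluded to there: identify $\fol$ with a saturated invertible subsheaf $\mathcal{O}(1-d)\to T\CP^n$, i.e.\ with a global section $s$ of $T\CP^n\otimes\mathcal{O}(d-1)$ whose zero set has codimension at least~$2$, then lift $s$ through the twisted Euler sequence using $H^1(\CP^n,\mathcal{O}(d-1))=0$, with GAGA supplying algebraicity. That part is correct and is the intended route.

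The only soft spot is the final normalization step, which you single out as the delicate point; in fact it is both misargued and unnecessary. The iteration you propose (``choose $g$ so that subtracting $g\cdot R$ cancels the radial part of the common factor, and repeat'') is not justified as written: it is not clear that such a $g$ exists at each stage, nor that the process terminates with trivial common factor. But no such procedure is needed. If a homogeneous $q$ with $\deg q\geq 1$ divided every $P_i$, then $X$ would vanish identically on the cone $\{q=0\}$, so the induced section $s$ of $T\CP^n\otimes\mathcal{O}(d-1)$ would vanish along the hypersurface $\{q=0\}\subset\CP^n$; since $d$ is by definition the twist for which $s$ is saturated (zero locus of codimension $\geq 2$), this is impossible. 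Hence \emph{every} lift of $s$ already has singular set of codimension at least~$2$, and the proof closes in one line. With that replacement your write-up is a complete and faithful version of the argument the paper cites.
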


Whereas, given $\fol$, the mentioned homogeneous vector field $X$ of Proposition~\ref{Naturefoliations_CPn} is not uniquely
defined, two homogeneous polynomial vector fields having singular set of codimension at least~$2$ and inducing the same
foliation on $\CP^n$ must have the same degree. Thus we can talk about the {\it degree of a foliation on $\CP^n$}\, as follows:

\begin{defi}
The degree of a foliation $\fol$ on $\CP^n$ is the degree of a homogeneous polynomial vector field on $\C^{n+1}$ having singular
set of codimension at least~$2$ and inducing $\fol$ in $\CP^n$ viewed as the space of radial lines of $\C^{n+1}$.
\end{defi}

Naturally blow-ups provide an alternative
way to realize the foliation induced on $\CP^n$ by a homogeneous polynomial vector field $X$ on $\C^{n+1}$.
Let $\widetilde{\C}^{n+1}$ stand for the blow-up of $\C^{n+1}$ at the origin and consider a homogeneous vector field
$X$ as above on $\C^{n+1}$. The blow up $\widetilde{X}$ of $X$ induces on $\widetilde{\C}^{n+1}$ the blow up $\tilf$ of the
the foliation $\fol$ induced by $X$ on $\C^n$.
Since, by assumption, $X$ is not everywhere parallel to the radial vector field $R$, there follows that the foliation
$\tilf$ leaves invariant the exceptional divisor $\pi^{-1} (0) \simeq \CP^n$. The restriction of $\tilf$ to the exceptional
divisor $\pi^{-1} (0) \simeq \CP^n$ can then naturally be identified with the foliation induced by $X$ on $\CP^n$ - viewed as
the space of radial lines of $\C^{n+1}$ - by means of the preceding construction.

Note that the blow up construction does not really requires the vector field to be homogeneous. In fact, as in Lemma~\ref{lemma1_dicritical},
consider a holomorphic vector field $Y$ defined around the origin of $\C^{n+1}$ whose Taylor series takes on the form
$Y = \sum_{i=k}^{\infty} Y_i$, where $Y_i$ stands for the homogeneous component of degree~$i$ of this Taylor series and $Y_k$
is not identically zero. As in Lemma~\ref{lemma1_dicritical}, we assume that $Y_k$ is not everywhere parallel to the radial vector field $R$.
The blow up of $Y$ induces a holomorphic foliation $\tilf$ on a neighborhood of the exceptional divisor $\pi^{-1} (0) \subset \widetilde{\C}^{n+1}$.
Moreover, since $Y_k$ is not a multiple of $R$, this foliation leaves $\pi^{-1} (0) \simeq \CP^n$ invariant and, in addition,
it is immediate to check that the restriction of $\tilf$ to $\pi^{-1} (0)$ coincides with the restriction to $\pi^{-1} (0)$ of
the foliation induced on $\widetilde{\C}^{n+1}$ by the blow up of $Y_k$ (alone). In particular,
the restriction of $\tilf$ to $\pi^{-1} (0)$
is identified with the foliation induced by the homogeneous vector field $Y_k$ on $\CP^n$ viewed
as the space of lines of $\C^{n+1}$.

The preceding motivates the following definition.

\begin{defi}\label{core_foliation}
	Let $\fol$ be a $1$-dimensional holomorphic foliation defined around the origin of $\C^n$ and assume that the blow up
	$\tilf$ of $\fol$ at the origin leaves the exceptional divisor $\pi^{-1} (0)$ invariant.
	Then the foliation induced on $\CP^{n-1} \simeq \pi^{-1} (0)$
	by the restriction of $\tilf$ is called the {\it core foliation}\, of $\fol$ and its global dynamics is referred to as the
	{\it core dynamics}\, of $\fol$.
\end{defi}

Again, if $Y$ is a representative of $\fol$ and $Y_k$ is as above ($Y = \sum_{i=k}^{\infty} Y_i$), the preceding then shows that the
core foliation of $\fol$ is nothing but the foliation induced on $\CP^{n-1}$ by the homogeneous vector field $Y_k$.

\subsection{$1$-dimensional foliations and dicritical codimension~$1$ foliations on $\C^3$}
The preceding discussion about foliations on projective spaces also applies to the case of codimension~$1$ {\it dicritical}\,
foliations on $(\C^n,0)$. To be more precise, codimension~$1$ dicritical foliation $\cald$ on $(\C^n,0)$ also induces through
the one-point blow-up centered at the origin a foliation on $\CP^{n-1}$, that will also be called the core foliation of $\cald$.
It is fair to say that this phenomenon and the corresponding dynamics were first exploited by Jouanolou \cite{jouanolou}
in his famous counterexample to a question posed by R. Thom. We shall review this issue below and go somewhat further by
exploiting the results in \cite{lorayandjulio} to see how difficult the situation may become.

In the sequel, we set $n=3$ to abridge notation.
First, let us characterize codimension~$1$ foliations that are {\it dicritical} for the blow-up of $\C^3$ centered
at the origin. Since the lemma below does not seem to be accurately stated in the literature, a detailed - albeit
straightforward - proof is included below.

\begin{lemma}
\label{invpunctual}
Assume that $\cald$ is a singular codimension~$1$ foliation defined on $(\C^3,0)$ and denote by $\tf$ its blow-up
centered at the origin. Then the exceptional divisor $\pi^{-1} (0) \simeq \CP^2$ is invariant under $\tf$ if and only
if no holomorphic vector field $Z$ tangent to $\cald$ admits a first non-zero homogeneous component (at the origin) that is
a multiple of the radial vector field $R$.
\end{lemma}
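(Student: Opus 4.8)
The plan is to translate the invariance of the exceptional divisor into a condition on the lowest-order jet of a defining $1$-form and then dispatch the two implications separately. Fix a representative $1$-form $\om = \om_m + \om_{m+1} + \cdots$ for $\cald$ near $0 \in \C^3$, where $\om_j = a_j\,dx + b_j\,dy + c_j\,dz$ is the homogeneous component of degree $j$ of the Taylor expansion and $\om_m \not\equiv 0$; write $R = x\,\partx + y\,\party + z\,\partz$ for the radial vector field. In the standard blow-up chart $\pi(x,u,v) = (x,xu,xv)$, using that a homogeneous degree-$j$ function $g$ satisfies $g(x,xu,xv) = x^j g(1,u,v)$, one computes
$$
\pi^{\ast}\om_j \;=\; x^j\Bigl[\, (i_R\om_j)(1,u,v)\,dx \;+\; x\bigl(b_j(1,u,v)\,du + c_j(1,u,v)\,dv\bigr)\Bigr],
$$
where $i_R\om_j = \om_j(R)$ is homogeneous of degree $j+1$. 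Summing over $j\ge m$, extracting the largest power of $x$ dividing $\pi^{\ast}\om$, and restricting the resulting transform to $E := \pi^{-1}(0) \simeq \CP^2$ (a possible common factor of the coefficients cannot vanish on $E$, so it is harmless), one finds: if $i_R\om_m \not\equiv 0$ the transform restricts along $E$ to a nonzero multiple of $dx$, hence $E$ is a leaf of $\tf$; if $i_R\om_m \equiv 0$ it restricts along $E$ to $b_m(1,u,v)\,du + c_m(1,u,v)\,dv$, which is not identically zero (were $b_m$ and $c_m$ both to vanish, the relation $i_R\om_m = x a_m + y b_m + z c_m \equiv 0$ would force $a_m \equiv 0$, contradicting $\om_m\ne 0$), and $\tf$ is generically transverse to $E$. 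This yields the intermediate statement: \emph{$E$ is invariant under $\tf$ if and only if $i_R\om_m \not\equiv 0$.}

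One implication of the lemma is then immediate. Suppose $Z$ is a holomorphic vector field tangent to $\cald$ whose first nonzero homogeneous component $Z_\ell$ is a multiple of $R$, say $Z_\ell = q\cdot R$ with $q$ a nonzero homogeneous polynomial. Taking the lowest-degree part of the identity $i_Z\om \equiv 0$ gives $0 = i_{Z_\ell}\om_m = q\,(i_R\om_m)$, hence $i_R\om_m \equiv 0$ because the polynomial ring is a domain; by the intermediate statement $E$ is not invariant. Therefore, if $E$ is invariant, no such $Z$ exists.

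For the converse one must produce such a $Z$ under the hypothesis $i_R\om_m \equiv 0$. Since the identity $i_R\om_m = x a_m + y b_m + z c_m \equiv 0$ is a syzygy of $(x,y,z)$, the Koszul relations give $\om_m = \sum_{i<j} p_{ij}\,(x_i\,dx_j - x_j\,dx_i)$ with $p_{ij}$ homogeneous of degree $m-1$; in particular $R$ lies in the kernel of $\om_m$ and, by Cartan's formula together with $i_R\om_m = 0$, one has $i_R\,d\om_m = L_R\om_m = (m+1)\,\om_m$. I would then construct $Z = q_0 R + (\text{terms of higher order})$, for a suitable homogeneous polynomial $q_0$, by solving $i_Z\om = 0$ degree by degree: once the terms of $Z$ through degree $j$ have been fixed so that $i_Z\om$ vanishes up to degree $m+j$, the next homogeneous term $W_{j+1}$ must satisfy $i_{W_{j+1}}\om_m = -\sigma_{j+1}$, where $\sigma_{j+1}$ is the lowest surviving homogeneous part of $i_Z\om$ for the current partial $Z$; such a $W_{j+1}$ exists exactly when $\sigma_{j+1}$ belongs to the ideal $(a_m, b_m, c_m)$ generated by the coefficients of $\om_m$. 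This is where the Frobenius condition enters: contracting $\om \wedge d\om \equiv 0$ with $R$ gives $(i_R\om)\,d\om = \om \wedge i_R d\om$, and since every coefficient of $\om \wedge (\text{a } 1\text{-form})$ lies in the ideal generated by the coefficients of $\om$, this identity — combined with the expansion $i_R d\om = (m+1)\om + (\text{higher order})$ coming from Cartan's formula — propagates the membership $\sigma_{j+1} \in (a_m, b_m, c_m)$ from each order to the next. Finally one checks that the formal vector field so obtained converges.

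The step I expect to be the main obstacle is precisely this last construction: one must make the propagation argument airtight even though the ideal $(a_m,b_m,c_m)$ need not be a complete intersection — which is where a preliminary generic linear change of coordinates normalizing $\om_m$, and a judicious choice of $q_0$ (for instance a sufficiently high power of a generic linear form, so that the solvability at each step becomes automatic by an Artin--Rees estimate), are likely to be needed — together with the convergence of the resulting series.
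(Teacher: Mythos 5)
Your reduction of the invariance of $\pi^{-1}(0)$ to the condition $i_R\omega_m\not\equiv 0$ is correct and is exactly the paper's starting point (its Equation~(\ref{tang1revision})), and your proof of the implication ``such a $Z$ exists $\Rightarrow$ the divisor is not invariant'' by taking lowest-degree parts of $i_Z\omega=0$ is the same as the paper's. The genuine gap is in the converse: you must actually \emph{produce} a holomorphic vector field tangent to $\cald$ whose first non-zero homogeneous component is a multiple of $R$, and your degree-by-degree scheme is left unfinished at precisely the hard points you yourself flag (propagating membership of $\sigma_{j+1}$ in the ideal $(a_m,b_m,c_m)$ when that ideal is not a complete intersection, and convergence of the resulting formal series). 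As written, this direction is not proved.

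The paper avoids all of this with a closed-form construction. Writing $\Omega=F\,dx+G\,dy+H\,dz$ and using the ``cross product'' $\wedge$ on $\C^3$, set $v(p)=R(p)\wedge(F,G,H)(p)$ and $Z(p)=v(p)\wedge(F,G,H)(p)$. Then $Z$ is holomorphic by construction and tangent to $\cald$ since $\Omega\cdot Z=\det\bigl(v,(F,G,H),(F,G,H)\bigr)=0$; no integrability, ideal-membership, or convergence argument is needed. A direct computation shows the homogeneous component of $Z$ of degree $2k+1$ equals
$$
(xF^k+yG^k+zH^k)\,(F^k,G^k,H^k)\;-\;\bigl((F^k)^2+(G^k)^2+(H^k)^2\bigr)\,(x,y,z),
$$
and the dicriticality hypothesis $xF^k+yG^k+zH^k\equiv 0$ kills the first term, leaving a multiple of $R$. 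The only residual point --- nontrivial over $\C$, where a sum of squares can vanish without each term vanishing --- is that $(F^k)^2+(G^k)^2+(H^k)^2\not\equiv 0$; the paper checks this by combining that identity with the dicriticality relation in the chart $(x,t,u)\mapsto(x,xt,xu)$ and deriving that $G^k$ would have to be a non-polynomial algebraic function. If you want to keep your own route, this is the construction you should replace your inductive scheme with; otherwise you would need to supply the solvability and convergence arguments in full, which is substantially harder than the lemma warrants.
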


\begin{proof}
Let $\cald$ be given by a holomorphic $1$-form $\Omega = F \, dx + G \, dy + H \, dz$ whose singular set has codimension at least~$2$.
Denote by $\Omega_k$ the first non-zero homogeneous component of $\Omega$ at the origin, where $k$ stands for the degree
of $\Omega_k$. Next, let $\Omega_k = F^k  dx + G^k dy + H^k dz$. A direct inspection shows that $\pi^{-1} (0)$ {\it is not invariant}\,
by $\tf$ if and only if
\begin{equation}
xF^k + y G^k + z H^k =0 \, . \label{tang1revision}
\end{equation}
Now, note that if $Z$ is any vector field tangent to $\cald$, and whose first non-zero homogeneous component is denoted by $Z^{l}$, then
$Z^l$ naturally provides a solution for $\{ \Omega_k =0\}$, i.e., we have $\Omega_k . Z^l = 0$. However, if $Z^l$ happens to be a multiple
of the radial vector field, then $\Omega_k . Z^l = 0$ is tantamount to Equation~(\ref{tang1revision}) which is thus satisfied.
Hence, the exceptional divisor is not invariant by $\tf$.

To show that the existence of a vector field $Z$ satisfying the above mentioned conditions is also necessary, we proceed as
follows. Assume that $\cald$ is dicritical, i.e., that Equation~(\ref{tang1revision}) holds and denote
by $\wedge$ the standard exterior power of two vectors on $\C^3$. Next define
a vector field $v$ by letting $v(p) = R(p) \wedge (F(p), G(p), H(p))$ and then set $Z(p) = v(p) \wedge (F(p), G(p), H(p))$.
Clearly $Z$ is tangent to the foliation $\cald$. To complete the proof of the lemma,
it suffices to check that the first non-zero homogeneous component of
$Z$ at the origin is a multiple of the radial vector field. For this, note that we have
\begin{equation}
Z = ( zFH + yFG -x(H^2 + G^2) \, ,  xFG +zHG -y(F^2 + H^2) \, ,  yHG +xFH -z(G^2 + F^2)) \label{tang2revision}
\end{equation}
In particular the order of
$Z$ at the origin is at least $2k+1$. The homogeneous component of degree $2k+1$ is, in turn, given in vector notation by
$$
(xF^k + yG^k + zH^k) (F^k  , G^k  ,  H^k ) - ((F^k)^2 + (G^k)^2 + (H^k)^2)
(x ,   y  , z ) \; .
$$
In view of Equation~(\ref{tang1revision}), we conclude that the component of degree $2d+1$ of $Z$ at the origin
is given by $-((F^k)^2 + (G^k)^2 + (H^k)^2) R$.

To finish the proof of the lemma it suffices to show that the polynomial $(F^k)^2 + (G^k)^2 + (H^k)^2$ cannot vanish
identically. This, however, can easily be done by using the variables $(x,t,u)$ where the blow-up map becomes
$\Pi (x,t,u) = (x,xt,xu)$. In these variables, the dicritical condition (i.e. Equation~(\ref{tang1revision})) means that
$F^k (1,t,u) + tG^k(1,t,u) + uH^k(1,t,u)$ must vanish identically. Now, suppose for a contradiction that
$(F^k)^2 + (G^k)^2 + (H^k)^2$ is also identically zero. Then the two equations taken together imply that
$(t^2+1) (G^k)^2 + 2tu (G^k) (H^k) + (u^2+1) (H^k)^2$ vanishes identically as well. By solving the corresponding last equation for $G^k$,
we derive a contradiction with the fact that $G^k$ is itself a polynomial in the variables $t,u$. The lemma is proved.
\end{proof}

Next, let us consider again a homogeneous polynomial vector field $X$ on $\C^3$ satisfying conditions (1) and (2) in the previous
subsection, i.e. the singular set of $X$ has codimension at least~$2$ and the vector fields $X$ and $R$ are linearly independent
at generic points (note that in the case of homogeneous vector fields of degree at least $2$, conditions (1) and (2) are equivalent).
Since $X$ is homogeneous, we have
$$
[R,X] = (d-1) X
$$
where $d$ stands for the degree of~$X$. Thus the pair $X$ and $R$ generates the Lie algebra of the affine group. In particular
the distribution of planes (of dimension~$2$) spanned by $X$ and $R$ is involutive and hence integrable. Let us then denote by
$\cald$ the codimension~$1$ foliation spanned by $X$ and $R$.

Let $\tf$ stands for the blow-up of $\cald$ centered at the origin so that $\tf$ is defined on $\widetilde{\C}^3$. Owing to
Lemma~\ref{invpunctual}, the exceptional divisor $\pi^{-1} (0) \simeq \CP^2$ {\it is not invariant}\, under $\tf$. Furthermore,
the structure of the foliation $\tf$ (and hence that of $\cald$) is essentially as complicated as the structure of the blow-up
$\tilf$ of $\fol$, where $\fol$ denotes the foliation induced by $X$. This observation deserves further comments.

To begin with, recall that $\widetilde{\C}^3$ can also be seen as the tautological line bundle over $\pi^{-1} (0) \simeq \CP^2$.
The bundle projection will be denoted by $\widetilde{\Pi} : \widetilde{\C}^3 \rightarrow \pi^{-1} (0)$ since it can naturally
be identified with the canonical projection $\Pi : \C^{3} \setminus \{ (0,0,0)\} \rightarrow \CP^2$. Next, recall that, unlike
$\tf$, the foliation $\tilf$ leaves the exceptional divisor invariant. In particular, at a point $p$ of $\pi^{-1} (0) \simeq \CP^2$
that is regular for $\tilf$, this foliation defines a direction $u_p \in T_p \pi^{-1} (0)$. Next, for $p$ ``sufficiently generic'',
the leaf of $\tf$ intersects transversely $\pi^{-1} (0)$. This transverse intersection naturally defines a direction
$v_p \in T_p \pi^{-1} (0)$. It is immediate to check that the directions of $v_p$ coincides with the one defined by
$\tilf$. Denoting by $\tilf_{\vert \pi^{-1} (0)}$ the foliation on $\pi^{-1} (0)$ obtained by restriction of $\tilf$,
we have the following:

\begin{lemma}\label{example_homogeneousvectorfield+radial}
The leaves of the foliation $\tf$ are of the form $\widetilde{\Pi}^{-1} (L)$ where $L$ is a leaf of $\tilf_{\vert \Pi^{-1} (0)}$.
Similarly, every leaf of $\tf$ is invariant by the foliation $\tilf$.\qed
\end{lemma}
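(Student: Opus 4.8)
The plan is to show that $\tf$ is, up to the identifications already in place, nothing but the pull-back under $\widetilde{\Pi}$ of the core foliation; once this is established, both assertions follow at once. Write $\fol_0 := \tilf_{\vert \pi^{-1}(0)}$ for the core foliation, that is, the foliation induced by $X$ on $\CP^2 \simeq \pi^{-1}(0)$, and recall that in the present subsection $\fol$ denotes the $1$-dimensional foliation defined by the orbits of $X$ on $\C^3$.

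First I would argue over $\C^3 \setminus \{0\}$, identified as usual with $\widetilde{\C}^3 \setminus \pi^{-1}(0)$; under this identification $\widetilde{\Pi}$ corresponds to the canonical projection $\Pi : \C^3 \setminus \{0\} \to \CP^2$, while $\tf$ and $\tilf$ correspond respectively to $\cald$ and $\fol$. At a point $p$ where $X(p)$ and $R(p)$ are linearly independent, one has $T_p \cald = {\rm span}\, \{R(p), X(p)\}$. Since $\ker d\Pi_p$ is spanned by the radial direction $R(p)$, while, by the very definition of the foliation induced by a homogeneous vector field on $\CP^2$, the differential $d\Pi_p$ carries $X(p)$ onto the direction of $\fol_0$ at $\Pi(p)$, we get $d\Pi_p (T_p \cald) = T_{\Pi (p)} \fol_0$, and therefore $T_p \cald = (d\Pi_p)^{-1} (T_{\Pi (p)} \fol_0)$. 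In other words $\cald = \Pi^{\ast} \fol_0$ on $\C^3 \setminus \{0\}$, so the leaves of $\cald$ there are exactly the sets $\Pi^{-1} (L)$ with $L$ a leaf of $\fol_0$.

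Next I would propagate this identity across the exceptional divisor. The map $\widetilde{\Pi} : \widetilde{\C}^3 \to \pi^{-1}(0) \simeq \CP^2$ is a holomorphic submersion (it is the bundle projection of the tautological line bundle over $\CP^2$), so $\widetilde{\Pi}^{\ast} \fol_0$ is a genuine singular codimension~$1$ foliation on the whole of $\widetilde{\C}^3$, with singular set $\widetilde{\Pi}^{-1} ({\rm Sing}\, (\fol_0))$ of codimension~$2$; moreover, by Lemma~\ref{invpunctual} applied to the vector field $R$ (which is tangent to $\cald$ and equals its own lowest-order homogeneous component, a multiple of the radial vector field), the divisor $\pi^{-1}(0)$ is not invariant by $\tf$, so $\tf$ is the transform of $\cald$ under the blow-up in the usual sense and in particular coincides with $\cald$ on the dense open set $\widetilde{\C}^3 \setminus \pi^{-1}(0)$. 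By the previous paragraph, $\tf$ and $\widetilde{\Pi}^{\ast} \fol_0$ then agree on that dense open set, hence their local defining $1$-forms are proportional on a dense open set and therefore everywhere, so $\tf = \widetilde{\Pi}^{\ast} \fol_0$ on $\widetilde{\C}^3$. Because $\widetilde{\Pi}$ is a surjective submersion with connected fibres, the leaves of $\widetilde{\Pi}^{\ast} \fol_0$ are precisely the sets $\widetilde{\Pi}^{-1} (L)$ with $L$ a leaf of $\fol_0 = \tilf_{\vert \pi^{-1}(0)}$, which is the first assertion. (A concrete verification is also available: a defining $1$-form for $\cald$ is $\Omega = \iota_R \iota_X (dx \wedge dy \wedge dz)$, and pulling $\Omega$ back by $(x,t,u) \mapsto (x, xt, xu)$ and dividing out the appropriate power of $x$ yields a $1$-form involving only $dt$ and $du$, i.e. the $\widetilde{\Pi}$-pull-back of a $1$-form on $\CP^2$; the argument above is cleaner.) I expect this passage across $\pi^{-1}(0)$ — checking that $\tf$ develops no extra tangency with the divisor and matches there the manifestly global foliation $\widetilde{\Pi}^{\ast} \fol_0$ — to be the only genuinely delicate point; the rest is soft.

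For the second assertion it suffices to check that every leaf of $\tilf$ is contained in a leaf of $\tf = \widetilde{\Pi}^{\ast}\fol_0$, equivalently that the tangent line field of $\tilf$ is contained in the tangent plane field of $\tf$ at every point regular for both. On $\widetilde{\C}^3 \setminus \pi^{-1}(0)$ this is clear: there $\tilf$ is the foliation by orbits of $X$, and $X$ is tangent to $\Pi^{\ast} \fol_0$ precisely because $d\Pi (X)$ points along $\fol_0$. Since this tangency is a closed condition that holds on a dense open subset of the connected common regular locus of $\tilf$ and $\tf$, it holds throughout; hence each leaf of $\tilf$ lies inside a leaf of $\tf$, and so every leaf of $\tf$, being a union of leaves of $\tilf$, is invariant by $\tilf$. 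This completes the proof.
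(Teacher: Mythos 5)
Your proof is correct, and it is essentially a careful write-up of the argument the paper itself relies on: the lemma is stated with no formal proof, the authors treating it as immediate from the preceding paragraph in which the direction cut out on $\pi^{-1}(0)$ by the transverse leaves of $\tf$ is identified with that of the core foliation. Your identification $\tf = \widetilde{\Pi}^{\ast}\bigl(\tilf_{\vert \pi^{-1}(0)}\bigr)$, obtained off the divisor from $T_p\cald = (d\Pi_p)^{-1}(T_{\Pi(p)}\fol_0)$ and extended across $\pi^{-1}(0)$ by the identity principle for integrable $1$-forms with codimension-two singular sets, is exactly the rigorous form of that discussion, and both assertions do follow from it as you say.
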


Recalling that every foliation on a projective space is induced by a homogeneous polynomial vector field, the
interest of Lemma~\ref{example_homogeneousvectorfield+radial} is actually captured by the following slightly loose statement:
{\it every foliation on $\CP^2$ is naturally the core foliation for singularities of both dimension~$1$ and codimension~$1$
foliations on $(\C^3,0)$}.

Before considering some concrete applications of the previous remark, let us close this section by
pointing out that the above construction allows us to define the core of a dicritical codimension~$1$ foliation on $(\C^3,0)$
as follows.

\begin{defi}\label{core_foliation-codimension1_C3}
Let $\cald$ be a codimension~$1$ holomorphic foliation defined around the origin of $\C^3$ and assume that the blow up
$\tf$ of $\cald$ at the origin does not leave the exceptional divisor $\pi^{-1} (0)$ invariant.
Then the foliation induced on $\CP^{n-1} \simeq \pi^{-1} (0)$
by the restriction of $\tilf$ is called the {\it core foliation}\, of $\fol$ and its global dynamics is referred to as the
{\it core dynamics}\, of $\fol$.
\end{defi}


\subsection{Jouanolou's example, chaotic dynamics, and their meaning for singularity theory}\label{subsec:Jouanolou}
Let us go back to R. Thom's question
on the existence of separatrices for codimension~$1$ foliations on $(\C^3,0)$, cf. Definition~\ref{definition_separatrix}. As pointed
out in Section~\ref{basics}, it is not always easy to construct codimension~$1$ foliations due to Frobenius integrability condition
that has to be satisfied by the distribution of planes in question. Yet, the discussion revolving around
Lemma~\ref{example_homogeneousvectorfield+radial} also indicates a simple way to construct lots of {\it dicritical}\, codimension~$1$
foliations on $\C^3$. More precisely, every foliation on $\CP^2$ yields one such dicritical codimension~$1$ foliation.

Let then $\cald$ be a dicritical codimension~$1$ foliation as above and assume that $\cald$ admits separatrices. Let then $S$ denote a
germ of an irreducible separatrix for $\cald$. Since $S$ has codimension~$1$, there follows the existence of a germ of an irreducible
holomorphic function $f : (\C^3,0) \rightarrow (\C,0)$ such that $S$ coincides with the set $\{ f=0\}$. In terms of Taylor series, we
set $f= \sum_{i \geq l}^{\infty} f_i$ where $l$ is the degree of the first non-zero homogeneous component of $f$. Let $\mathcal{C}
\subset \CP^2$ be the curve defined on the projective plane by the homogeneous equation $\{ f_l = 0\}$ (the tangent cone to $S$).
If we denote by $\fol$ the {\it core}\, of $\cald$ (recall than that $\fol$ is a $1$-dimensional foliation on $\CP^2$), then the
following can be said:

\begin{lemma}\label{Invariantcone_remarkjouanolou}
	With the preceding notation, the curve $\mathcal{C} \subset \CP^2$ is invariant by $\fol$.
\end{lemma}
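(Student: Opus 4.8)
The plan is to reduce the statement to a short comparison of lowest-order homogeneous components. Fix a holomorphic $1$-form $\Omega = F\,dx + G\,dy + H\,dz$ representing $\cald$, with singular set of codimension at least~$2$, and let $\Omega_k = F^k\,dx + G^k\,dy + H^k\,dz$ be its first non-zero homogeneous component, of degree~$k$. First I would recall, exactly as in the computation preceding Lemma~\ref{invpunctual}, that dicriticalness of $\cald$ means $xF^k + yG^k + zH^k \equiv 0$, i.e. $\iota_R\Omega_k = 0$; hence $\Omega_k$ annihilates the radial direction, the plane field $\ker\Omega_k$ contains $R$ at each of its regular points, and $\ker\Omega_k$ therefore projectivizes to a $1$-dimensional foliation on $\CP^2$. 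An inspection of the blow-up chart $\pi(x,t,u) = (x,xt,xu)$ --- in which $\pi^{\ast}\Omega = x^{k+1}\,\Omega'$ for a $1$-form $\Omega'$ whose restriction to $\{x=0\}$ equals $G^k(1,t,u)\,dt + H^k(1,t,u)\,du$, the gain of one unit in the $dx$-coefficient being forced precisely by the dicritical relation --- identifies the foliation traced on the exceptional divisor, namely the core foliation $\fol$ of Definition~\ref{core_foliation-codimension1_C3}, with this projectivization of $\ker\Omega_k$.

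Next I would encode the invariance of $S = \{f=0\}$ algebraically. Since $S$ is invariant by $\cald$, at each smooth point of $S$ lying off ${\rm Sing}(\cald)$ the hyperplanes $\ker\Omega$ and $\ker df$ coincide, so $\Omega$ and $df$ are proportional there; thus the holomorphic $2$-form $\Omega\wedge df$ vanishes along $S$ and, $f$ being irreducible, is divisible by $f$:
$$
\Omega \wedge df \; = \; f\,\Theta
$$
for some holomorphic $2$-form $\Theta$. Setting $l = {\rm ord}(f)$, one has ${\rm ord}(\Omega\wedge df) \geq {\rm ord}(\Omega) + {\rm ord}(df) = k + (l-1)$, hence ${\rm ord}(\Theta) = {\rm ord}(\Omega\wedge df) - l \geq k-1$; write accordingly $\Theta = \Theta_{k-1} + \Theta_k + \cdots$, with $\Theta_{k-1}$ possibly zero. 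Equating the homogeneous components of degree $k+l-1$ in the identity above (recall $df = df_l + df_{l+1} + \cdots$, with $df_l$ of degree $l-1$) yields
$$
\Omega_k \wedge df_l \; = \; f_l\,\Theta_{k-1} \, .
$$
From this a routine factor-by-factor argument --- peeling off from $f_l$ the power of a given irreducible factor $g$ and using that $g$ is coprime to the remaining part --- shows that $g \mid \Omega_k\wedge dg$; equivalently, every irreducible component of the affine cone $\{f_l = 0\}\subset\C^3$ is invariant by the plane field $\ker\Omega_k$.

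Finally I would projectivize. The plane field $\ker\Omega_k$ contains $R$ and the cone $\{f_l=0\}$ is invariant under all homotheties, so they descend respectively to $\fol$ and to the curve $\mathcal{C}\subset\CP^2$. For a component $\{g=0\}$ of the cone and a point $p\in\C^3\setminus\{0\}$ lying over a smooth point $[p]$ of $\{g=0\}\subset\CP^2$ that is regular for $\fol$, one has $g(p)=0$, $dg(p)\neq 0$, $\Omega_k(p)\neq 0$, and the relation $g \mid \Omega_k\wedge dg$ forces $\Omega_k(p)\wedge dg(p) = 0$; hence $\ker\Omega_k(p) = \ker dg(p) = T_p\{g=0\}$, which upon passing to $\CP^2$ reads $\fol_{[p]} = T_{[p]}\{g=0\}$. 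Since such $[p]$ are dense in the component $\{g=0\}$ of $\mathcal{C}$, each component of $\mathcal{C}$ is invariant by $\fol$, and therefore so is $\mathcal{C}$.

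The step I expect to require the most care is the first one: verifying that in the dicritical codimension~$1$ setting the foliation cut out on $\pi^{-1}(0)$ genuinely is the projectivization of $\Omega_k$, rather than a form also incorporating the $dx$-coefficient of $\pi^{\ast}\Omega$. That coefficient has order exactly one larger than the $dt$- and $du$-coefficients in the chart $\pi(x,t,u)=(x,xt,xu)$, and this is exactly what the dicritical relation $xF^k+yG^k+zH^k\equiv 0$ provides; getting this bookkeeping right is what makes the identification of $\fol$ with the projectivization of $\ker\Omega_k$ legitimate. Once that is settled, the remaining steps are the elementary homogeneity computations indicated above.
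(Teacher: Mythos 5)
Your proof is correct and follows essentially the same route as the paper's: both arguments truncate to the lowest-degree homogeneous components, deduce that the tangent cone $\{f_l=0\}$ is invariant under the plane field $\ker\Omega_k$, and then pass to the projectivization on $\CP^2$. The only differences are presentational — the paper packages the last step via the explicit representative vector field $X(p)=R(p)\wedge(F^k(p),G^k(p),H^k(p))$ for the core foliation, while you work directly with $\ker\Omega_k$ and spell out the divisibility computation $\Omega\wedge df=f\,\Theta\Rightarrow\Omega_k\wedge df_l=f_l\,\Theta_{k-1}$ that the paper leaves as "one checks".
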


\begin{proof}
The foliation $\cald$ is defined by an integrable $1$-form $\Omega$ whose Taylor series takes on the form
$\Omega = \sum_{i=k}^{\infty} \Omega_i$ where $k$ stands again for the first non-zero homogeneous component of $\Omega$.
A simple argument based on degrees shows that the $1$-form $\Omega_k$ is integrable as well, i.e., it satisfies Frobenius equation
$\Omega_k \wedge d\Omega_k =0$. Similarly, one checks that the (homogeneous) surface defined by $\{ f_l =0\}$ yields a separatrix
for the codimension-$1$ foliation $\cald_k$ induced by $\Omega_k$. Set $\Omega_k = F^k  dx + G^k dy + H^k dz$ and, as usual, let $R$ denote
the radial vector field on $\C^3$.

Next recall that a homogeneous vector field of $\C^3$ representing $\fol$ is well defined only up to a multiplicative constant
and addition of a multiple of the radial vector field. Now, since $\Omega_k$ is homogeneous, the vector $R (p)$ is contained in the
plane defined by the kernel of $\Omega_k (p)$ at the point $p$. Hence, up to eliminating multiplicative factors, a representative
vector field $X$ for $\fol$ can be obtained by letting $X (p) = R(p) \wedge (F^k(p), G^k(p), H^k(p))$. In particular $X(p)$ lies
in the kernel of $\Omega_k (p)$, i.e., $X$ is tangent to the foliation $\cald_k$. Finally, since at regular points $p \in \{ f_l =0\}$
the tangent space at $\{ f_l =0\}$ and the kernel of $\Omega_k (p)$ coincide, we conclude that $X$ is tangent to the surface $\{ f_l =0\}$.
The lemma then follows immediately.
\end{proof}

In view of Lemma~\ref{Invariantcone_remarkjouanolou}, the basic remark of Jouanolou concerning Thom's conjecture was the following one:
if we can find a foliation $\fol$ on $\CP^2$ leaving invariant no algebraic curve, then the (dicritical) codimension~$1$
foliation $\cald$ arising from combining the radial vector field of $\C^3$ and a representative of homogeneous vector field for $\fol$ will
admit {\it no separatrix}.

Jouanolou's remark is possibly the first instance where the existence of the {\it core dynamics}\, actually impacts
the study of singularities of foliations. From this point of view, the main result of Jouanolou in \cite{jouanolou-2} can be stated
as follows:

\begin{theorem}\cite{jouanolou-2}
For every $d \geq 2$, the foliation induced on $\CP^2$ by the vector field	
$$
X_d = y^d \partx + z^d \party + x^d \partz \,
$$
leaves no algebraic curve invariant.
\end{theorem}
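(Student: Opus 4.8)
The plan is to argue by contradiction using the Camacho--Sad index formula for invariant algebraic curves on surfaces. Assume the foliation $\fol$ induced on $\CP^2$ by $X_d$ leaves some algebraic curve invariant. Passing to an irreducible component (every irreducible component of an invariant curve is again invariant), we obtain an irreducible invariant curve $C$ of some degree $e\geq1$. Since the singular set of $\fol$ is finite (it is described explicitly below), the Camacho--Sad formula reads
$$ \sum_{p\,\in\, C\,\cap\,{\rm Sing}\,(\fol)} {\rm CS}(\fol,C,p) \;=\; C\cdot C \;=\; e^2 \, , $$
and in particular $C$ must contain at least one singular point of $\fol$, since otherwise the left--hand side would be an empty sum while the right--hand side is $\geq1$. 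The crux of the proof is that the local Camacho--Sad indices of $\fol$ are too rigid to add up to a positive integer, and this follows from an explicit description of ${\rm Sing}\,(\fol)$ together with the eigenvalues of $\fol$ at those points.

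In the affine chart $\{x\neq0\}$, with coordinates $(v,w)=(y/x,z/x)$, the foliation $\fol$ is given by the vector field $\widetilde Z=(w^d-v^{d+1})\,\partial_v+(1-wv^d)\,\partial_w$ (the standard affine expression for the projectivization of a homogeneous polynomial vector field). Its singular points satisfy $wv^d=1$ and $w^d=v^{d+1}$; substituting $w=v^{-d}$ yields $v^{\,d^2+d+1}=1$, so ${\rm Sing}\,(\fol)$ consists of the $N:=d^2+d+1$ distinct points $p_\omega=[1:\omega:\omega^{-d}]$ with $\omega^N=1$; in particular $\fol$ has no singularity on the three coordinate lines, by the cyclic symmetry $x\to y\to z\to x$ of $X_d$. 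A direct computation of the Jacobian of $\widetilde Z$ at $p_\omega$ (reducing the resulting exponents of $\omega$ modulo $N$ via $d^2\equiv-(d+1)\pmod N$, equivalently $d^3\equiv1\pmod N$) gives trace $-(d+2)\,\omega^d$ and determinant $N\,\omega^{2d}$. Hence the two eigenvalues of $\fol$ at $p_\omega$ are $\omega^d\nu_1$ and $\omega^d\nu_2$, where $\nu_1,\nu_2$ are the roots of $t^2+(d+2)t+N=0$. Since the discriminant $(d+2)^2-4N=-3d^2$ is negative, the $\nu_i$ are nonzero, distinct and complex--conjugate; therefore every $p_\omega$ is a reduced (non--degenerate) singularity of $\fol$, and the eigenvalue ratio $\tau:=\nu_2/\nu_1$ is the same at all of them, lies on the unit circle, is different from $\pm1$ hence non--real, satisfies $\overline\tau=\tau^{-1}$, and
$$ \tau+\tau^{-1} \;=\; \frac{\nu_1^2+\nu_2^2}{\nu_1\nu_2} \;=\; \frac{(d+2)^2-2N}{N} \;=\; \frac{-d^2+2d+2}{d^2+d+1} \, . $$

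Now go back to $C$. At a point $p_\omega\in C\cap{\rm Sing}\,(\fol)$, since $p_\omega$ is a reduced singularity with non--rational eigenvalue ratio, $\fol$ has exactly two separatrices there, both smooth and transverse; as an invariant curve germ is a union of separatrices, either $C$ is smooth at $p_\omega$ with ${\rm CS}(\fol,C,p_\omega)\in\{\tau,\tau^{-1}\}=\{\tau,\overline\tau\}$, or $C$ has an ordinary node at $p_\omega$ formed by the two separatrices (a case treated at the end). Assume for the moment that $C$ is smooth at every singular point it meets, and let $a$ (resp.\ $b$) be the number of such points where the index equals $\tau$ (resp.\ $\overline\tau$). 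The Camacho--Sad formula becomes $a\tau+b\overline\tau=e^2$; comparing imaginary parts and using ${\rm Im}\,\tau\neq0$ forces $a=b$, and then comparing real parts gives $a(\tau+\tau^{-1})=e^2$, that is
$$ a\,(-d^2+2d+2) \;=\; e^2\,(d^2+d+1) \, , \qquad\text{with}\qquad 2a=\#\big(C\cap{\rm Sing}\,(\fol)\big)\in\{1,\ldots,N\} \, . $$
If $d\geq3$ then $-d^2+2d+2<0$, so the left--hand side is $\leq0$ whereas the right--hand side is $\geq d^2+d+1>0$: impossible. If $d=2$ the relation reads $2a=7e^2$, which forces $7\mid 2a$; but $2a\leq N=7$, so $e=1$ and $2a=7$, impossible over the integers. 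In every case $d\geq2$ we reach a contradiction, so $\fol$ admits no invariant algebraic curve.

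The two ingredients on which this argument turns are the ones to watch, and I expect the eigenvalue computation to be the main obstacle in writing it out carefully: there the delicate point is the systematic reduction of exponents of $\omega$ modulo $N$, and it is precisely the identity $d^3\equiv1\pmod{d^2+d+1}$ that collapses trace and determinant to the $\omega$--independent quadratic $t^2+(d+2)t+N$, giving the same eigenvalue ratio at all $N$ singular points. The second ingredient is the local theory at reduced singularities (exactly two smooth transverse separatrices, Camacho--Sad index equal to an eigenvalue ratio) together with the Camacho--Sad index formula itself; these are standard but should be cited precisely. Finally, the postponed node case: if $C$ is singular at some $p_\omega$, then $p_\omega$ is an ordinary node of $C$ whose branches are the two separatrices, with local index $\tau+\tau^{-1}+2$; inserting this extra term in the real--part computation and combining the resulting identity with the bound $\binom{e-1}{2}$ on the number of nodes of an irreducible plane curve of degree $e$ (from the genus formula, the geometric genus being nonnegative) again yields a contradiction for all $d\geq2$. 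As a consistency check, for $d=1$ the same computation gives $\tau+\tau^{-1}=1$, in accordance with the fact that the linear field $X_1$ does leave three lines invariant; so the hypothesis $d\geq2$ is genuinely used.
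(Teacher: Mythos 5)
The paper does not prove this statement at all: it is quoted with a citation to Jouanolou, so there is no internal proof to compare against. Your argument is a genuinely different route from Jouanolou's original one, which is algebraic (it works with the cofactor identity $X_d(F)=F\cdot G$ for a putative invariant homogeneous polynomial and exploits degree considerations together with the symmetries of $X_d$), whereas yours runs through the Camacho--Sad index theorem. I checked the computations and they are correct: the affine representative $(w^d-v^{d+1})\,\partial/\partial v+(1-wv^d)\,\partial/\partial w$ in the chart $(v,w)=(y/x,z/x)$; the $N=d^2+d+1$ singular points $[1:\omega:\omega^{-d}]$, $\omega^N=1$, which exhaust ${\rm Sing}\,(\fol)$; the trace $-(d+2)\omega^d$ and determinant $N\omega^{2d}$ (the reduction $-d^2+d-1\equiv 2d \pmod N$ being the key step), hence an $\omega$-independent, non-real, unimodular eigenvalue ratio $\tau$ with $\tau+\tau^{-1}=(-d^2+2d+2)/(d^2+d+1)$; and the final arithmetic, where $a(\tau+\tau^{-1})=e^2$ fails by sign for $d\geq 3$ and fails for $d=2$ because $2a=7e^2$ together with $2a\leq N=7$ would force $2a=7$. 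Two points should be made explicit in a written version. First, since the invariant curve may be singular at the points of ${\rm Sing}\,(\fol)$ it meets, the index formula you need is the Lins Neto--Suwa extension of the theorem of \cite{camachosad} to singular invariant curves, with the index at an ordinary node formed by the two separatrices equal to $\tau+\tau^{-1}+2$; this requires a precise reference. Second, the node case is only sketched, but it does close: writing $c$ for the number of nodes, the real part of the index relation gives $(a+c)(\tau+\tau^{-1})=e^2-2c$, so for $d\geq 3$ one gets $e^2\leq 2c\leq (e-1)(e-2)$, which is impossible, while for $d=2$ the identity $2a+16c=7e^2$ combined with $2a+c\leq 7$ and $2c\leq (e-1)(e-2)$ yields $21e\leq 28$, hence $e=1$ and $c=0$, a contradiction. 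What your approach buys is that the whole obstruction becomes visible in the eigenvalues at the $N$ singular points, and it fits the spirit of this survey (local analysis of reduced singularities plus a global index theorem) better than Jouanolou's algebraic argument; what it costs is the reliance on the index machinery, including its version for singular curves.
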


Jouanoulou theorem implies, in particular, that for every fixed $d \geq 2$, there exist foliations of degree~$d$ that are not
tangent to any algebraic curve of $\CP^2$.

Armed with the above theorem, there follows from what precedes that the codimension~$1$ {\it Jouanolou foliation $J_d$}, $d \geq 2$,
of $\C^3$ which is defined as the singular foliation spanned by $X_d$ and the radial vector field is a {\it counterexample
to Thom's question}. The well-known explicit $1$-form $\Omega$,
$$
\Omega = (yx^d -z^{d+1}) \, dx \; + \; (zy^d - x^{d+1}) \, dy \; + \;
(xz^d - y^{d+1}) \, dz \; ,
$$
defining the foliation $J_d$ can promptly be obtained by taking the vector product of $X_d$ and $R$.

\bigbreak

The next question is to wonder how far the {\it core dynamics} can influence the study of singularities of foliations,
say of dimension~$1$ on $\C^n$, $n\geq 3$. In other words, owing to the discussion in this section, the detailed understanding
of the local structure of one such foliation arguably passes through the global description of its {\it core foliation}.
This understanding would require, in particular, a (global) control of the dynamics of the core foliation. At this point, we might
wonder whether it is possible to obtain such an accurate local description of all $1$-dimensional foliations on, say, $(\C^3,0)$.
From the standpoint emphasized above, an easier question would be to provide a reasonable global description of all or nearly all foliations
on $\CP^2$. Unfortunately, the latter question does not seem to admit an affirmative answer as it follows
from Loray-Rebelo theorem \cite{lorayandjulio} as stated below.

Fix positive integers $n$ and $d$, with $\min \{ n,d \} \geq 2$. A straightforward counting of parameters shows that the space ${\rm Fol}^{(d)(n)}_{\CP}$
of degree~$d$ foliations on $\CP^n$ can be identified with a Zariski-open set of the complex projective space of dimension
$$
(d+n+1){(d+n-1)!\over d!(n-1)!}-1.
$$
This space of foliation can then be furthered moduled out by the action of the automorphism group
${\rm PSL}\, (n+1, \C)$ of $\CP^n$ but this will not be needed in the sequel.
The main upshot here is that ${\rm Fol}^{(d)(n)}_{\CP}$ can be parameterized by a finite dimensional complex manifold and, in particular,
inherits of a natural topology. With this notation, the main result of \cite{lorayandjulio} reads as follows:

\begin{theorem}\cite{lorayandjulio}\label{FrankandJulio}
Fixed $n,d \geq 2$, there exists a non-empty open subset $\mathcal{U} \subset {\rm Fol}^{(d)(n)}_{\CP}$ such that every foliation
$\fol$ lying in $\mathcal{U}$ satisfies all the conditions below:
\begin{enumerate}
	\item All singular points of $\fol$ are hyperbolic. In particular, they form a finite set.
	
	\item Minimality: Every leaf of $\fol$ is dense in $\CP^n$.
	
	\item Ergodicity: Every measurable set of leaves has either zero or total
	Lebesgue measure.
	
	\item Rigidity: If $\fol' \in {\rm Fol}^{(d)(n)}_{\CP}$ is conjugate to $\fol$ by a homeomorphism
	$h : \CP^n \rightarrow \CP^n$ that is close to the identity, then $\fol$ and $\fol'$ are also conjugate by an element of
	${\rm PSL}\, (n+1, \C)$.
\end{enumerate}
\end{theorem}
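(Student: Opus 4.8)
\medskip
\noindent\emph{Sketch of the argument.}
The plan is to exhibit a single foliation $\fol_0 \in {\rm Fol}^{(d)(n)}_{\CP}$ satisfying a strong form of (1)--(4), together with enough stability, and then to take $\mathcal{U}$ to be a suitable neighbourhood of $\fol_0$. The backbone of the construction is an invariant projective line carrying a wild holonomy group. First I would produce $\fol_0$ leaving a line $\ell \simeq \CP^1$ invariant, with $\mathrm{Sing}\,(\fol_0)\cap\ell=\{p_1,\dots,p_{d+1}\}$ (the number of singular points forced on an invariant line by the index/degree relation), each $p_i$ a hyperbolic singular point whose linear part is diagonal with eigenvalues of pairwise distinct moduli, chosen generically subject only to the one multiplicative relation among holonomy multipliers imposed by the invariance of $\ell$. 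Fixing a small transversal $\Sigma\simeq(\C^{n-1},0)$ to $\ell$, the leaf $\ell\setminus\mathrm{Sing}\,(\fol_0)$ carries a holonomy representation $\rho:\pi_1(\CP^1\setminus\{p_i\})\to\mathrm{Diff}\,(\Sigma,0)$ with image $G=\langle h_1,\dots,h_{d+1}\rangle$, the $h_i$ being hyperbolic germs subject to $h_1\cdots h_{d+1}=\id$. A first technical point is to check that, for a Zariski-generic choice of the linear data, $G$ is non-solvable (and, when $n\geq 3$, sufficiently non-degenerate for the several-variable dynamics below).

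\medskip
\noindent
Granting non-solvability, I would invoke the dynamics of non-solvable pseudogroups of germs of biholomorphisms --- the one-variable Shcherbakov--Nakai theory for $n=2$ and its several-variable counterpart, which one has to develop for $n\geq 3$ (cf. \cite{lorayandjulio}, \cite{Rebelo_ETDS}): such a $G$ has every orbit dense near $0$ and acts ergodically on $\Sigma$, every $G$-saturated measurable set having zero or full Lebesgue measure. The next step is geometric: to globalise these transverse statements. One shows that $\fol_0$ admits no invariant algebraic subset besides $\ell$ and its singular points; from this (together with the minimality of $G$) one deduces that every leaf of $\fol_0$ accumulates onto $\ell$, so that density of the $G$-orbits on $\Sigma$ propagates along the foliation and yields minimality of $\fol_0$ on all of $\CP^n$, i.e.\ (2). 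A parallel argument, using the local product structure of $\fol_0$ near $\ell$ to compare Lebesgue measure on $\CP^n$ with Lebesgue measure on $\Sigma$, transports ergodicity of $G$ to ergodicity of $\fol_0$, i.e.\ (3). Condition (1) is built into the construction.

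\medskip
\noindent
For rigidity, the mechanism is that hyperbolicity makes the $h_i$, hence $G$, hence the whole holonomy pseudogroup of $\ell$, holomorphically rigid: a topological conjugacy between two hyperbolic germs is automatically analytic (up to the evident normalisation), and this upgrades, using the density already obtained, to the statement that any pseudogroup topologically conjugate to the holonomy pseudogroup of $\ell$ by a conjugacy close to the identity is in fact holomorphically conjugate to it. Now let $h:\CP^n\to\CP^n$ be a homeomorphism close to the identity with $h_{\ast}\fol_0=\fol'$. Then $h$ carries $\mathrm{Sing}\,(\fol_0)$ to $\mathrm{Sing}\,(\fol')$, and near each $p_i$ the topological conjugacy of hyperbolic singularities is holomorphic; combining this with the persistence of the invariant line one sees that $\fol'$ leaves a line $\ell'$ close to $\ell$ invariant and that $h$ conjugates the holonomy pseudogroup of $\ell$ to that of $\ell'$. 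By the rigidity above this conjugacy is holomorphic on a transversal, holomorphy propagates along the leaves, and one concludes that $h$ coincides with a biholomorphism of $\CP^n$, hence with an element of $\mathrm{PSL}\,(n+1,\C)$; this is (4).

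\medskip
\noindent
Finally, one produces $\mathcal{U}$. Hyperbolicity of singular points is open; since $\ell$ is a rational curve with ample normal bundle $\mathcal{O}(1)^{\oplus(n-1)}$ in $\CP^n$ the relevant first-order obstruction vanishes, so every foliation near $\fol_0$ still leaves invariant a line close to $\ell$, whose holonomy group stays non-solvable; hence minimality and ergodicity persist, and since the underlying hyperbolic pseudogroup is structurally stable the rigidity conclusion persists as well, so (1)--(4) hold on a neighbourhood $\mathcal{U}$ of $\fol_0$. The steps I expect to be hardest are, on the one hand, the realisation problem --- packing a genuinely wild, non-solvable, rigid holonomy group into the finite-dimensional and rather rigid family ${\rm Fol}^{(d)(n)}_{\CP}$, where only $d+1$ singular points and correspondingly few moduli are available on $\ell$ and where one must simultaneously rule out spurious invariant algebraic sets --- and, on the other hand, the rigidity step, where a purely topological conjugacy of $\CP^n$ must be upgraded to a holomorphic one, resting on the delicate local rigidity of hyperbolic germs together with a propagation/globalisation argument in the spirit of GAGA; for $n\geq 3$ there is the extra, non-routine burden of building the dynamics of non-solvable pseudogroups of $(\C^{n-1},0)$ essentially from scratch.
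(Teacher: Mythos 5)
Your overall architecture --- an invariant line $\ell$ carrying $d+1$ hyperbolic singular points, a wild holonomy pseudogroup on a transversal, transverse density/ergodicity/rigidity promoted to global statements, then stability --- is indeed the skeleton of the proof in \cite{lorayandjulio}. But two of your steps have genuine gaps.

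The most serious one is the final openness argument. For $d\geq 2$, possessing an invariant line is a \emph{closed, positive-codimension} algebraic condition in ${\rm Fol}^{(d)(n)}_{\CP}$: invariance of a line forces the identical vanishing of a degree-$d$ binary form (the tangency divisor), i.e.\ $d+1$ conditions, while lines move in a $2(n-1)$-parameter family; already on $\CP^2$ a generic degree-$2$ foliation leaves no line invariant (Jouanolou's examples leave no algebraic curve whatsoever invariant). The unobstructedness of deformations of $\ell$ as a rational curve with normal bundle $\mathcal{O}(1)^{\oplus (n-1)}$ concerns deforming $\ell$ as a subvariety of $\CP^n$; it says nothing about whether the \emph{perturbed foliation} leaves some nearby line invariant, and in general it does not. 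So the set $\mathcal{U}$ you construct would sit inside a proper subvariety and have empty interior. What actually persists under perturbation is not the algebraic curve $\ell$ but a \emph{hyperbolic fixed point of the holonomy pseudogroup}: a loop in a leaf with contracting hyperbolic holonomy survives small perturbations together with its basin, and in \cite{lorayandjulio} the entire dynamical apparatus is built on subgroups of ${\rm Diff}\,(\C^{n-1},0)$ that are non-discrete \emph{and contain a hyperbolic contraction}, precisely because these features, unlike the invariant line, are open. Relatedly, you should not hang the transverse dynamics on non-solvability: for pseudogroups of $(\C^{k},0)$ with $k\geq 2$, non-solvability is not known to force dense orbits or ergodicity, and the results of \cite{lorayandjulio} genuinely require the hyperbolic contraction (see item (C') in Section~3 of this survey).

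The second gap is the globalisation of minimality. Knowing that $\fol_0$ admits no invariant algebraic set besides $\ell$ does not yield that every leaf accumulates on $\ell$: the obstruction is a possible exceptional (non-algebraic) minimal set, and ruling these out for foliations of $\CP^2$ is a well-known open problem in general. In \cite{lorayandjulio} this is handled by an argument tailored to the examples, showing that the foliated saturation of the basin of the hyperbolic contraction is open and closed in the complement of ${\rm Sing}\,(\fol_0)$ and hence exhausts it --- not by the algebraic-invariant-set dichotomy you invoke. Your rigidity step, by contrast, is essentially the right mechanism (topological conjugacy of the non-discrete pseudogroup upgraded to holomorphic, then propagated along dense leaves), once it is rebuilt on the persistent hyperbolic contraction rather than on the non-persistent invariant line.
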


The level of dynamical complication exhibited by the foliations indicated above puts any accurate description of them basically
out of reach. Moreover, even up to topological conjugation, it is not possible to achieve a reasonable list of ``models'' or
``normal forms'' owing to the above indicated rigidity phenomenon.

It is convenient to expound a bit on the consequences of Theorem~\ref{FrankandJulio} from the point of view of singularity
theory for $1$-dimensional foliations on dimensions~$3$ and greater. Consider then a foliation lying in the set
$\mathcal{U} \subset {\rm Fol}^{(d)(n)}_{\CP}$ provided by Theorem~\ref{FrankandJulio}. As a foliation defined
on $\CP^n$, it can be represented by some homogeneous polynomial vector field $X$ on $\C^{n+1}$. In other words,
if $\fol$ is the foliation on $\C^{n+1}$ induced by the local orbits of $X$ then $\tilf$, its (one-point)
blow-up at the origin, leaves the exceptional divisor $\pi^{-1} (0) \simeq \CP^n$ invariant and is such that
the restriction of $\tilf$ to $\pi^{-1} (0) \simeq \CP^n$ is naturally identified with the initial foliation
in $\mathcal{U} \subset {\rm Fol}^{(d)(n)}_{\CP}$.

Now recall that the vector field $X$ is not uniquely defined: most notably, we can add to $X$ any multiple of
the radial vector field by a homogeneous polynomial of degree~$d-1$. Since, in addition,
the singularities of the initial foliation in
$\mathcal{U}$ are all hyperbolic, it is easy to conclude that the vector field $X$ can be chosen so as to fulfill the
following conditions:
\begin{enumerate}
	\item The foliation $\fol$ has an isolated singularity at the origin of $\C^{n+1}$.
	
	\item The foliation $\tilf$, viewed
	as foliation on a manifold of dimension~$n+1$, still have only hyperbolic singularities.
\end{enumerate}
Furthermore, a generic
choice of the initial foliation in $\mathcal{U}$ and of the vector field $X$ allows us to rule out the existence
of resonances at the singular points of $\tilf$ as well. Thus, all the singularities of $\tilf$ are, in fact, linearizable.
Also, all the above mentioned characteristic are stable under higher order perturbations of a representative vector field.
The situation can then be summarized as a statement in itself.

\begin{theorem}
	\label{summary_chaosinfoliations} {\rm (Corollary of~\cite{lorayandjulio})}
	For every degree $d\geq 2$, there exists a non-empty open set $V$ of homogeneous vector fields of degree~$d$ in $\mathfrak{X} (\C^{n+1},0)$
	such that every germ of foliation $\fol$ represented by a holomorphic vector field $X$ having the form $X = X^d + {\rm h.o.t.}$,
	with $X^d \in V$ and where ${\rm h.o.t.}$ stands for higher order terms, satisfy all of the following conditions:
	\begin{itemize}
		\item[(1)] The one-point blow up $\tilf$ of $\fol$ at the origin leaves the exceptional divisor
		$\pi^{-1} (0) \simeq \CP^n$ invariant.
		
		\item[(2)] All singular points of $\tilf$ are hyperbolic and linearizable. In particular, $\tilf$ has exactly
		$$
		\frac{d^{n+1} -1}{d-1}
		$$
		singular points and all of them lie in $\pi^{-1} (0) \simeq \CP^n$.
		
		\item[(3)] The restriction of $\tilf$ to $\pi^{-1} (0) \simeq \CP^n$ defines a degree~$d$ foliation
		of $\CP^n$ lying in the open set $\mathcal{U} \subset {\rm Fol}^{(d)(n)}_{\CP}$ given by
		Theorem~\ref{FrankandJulio}.
	\end{itemize}
\end{theorem}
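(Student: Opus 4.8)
The plan is to organize the discussion preceding the statement into a proof, the one genuinely new ingredient being a computation of the linear part of $\tilf$ at its singular points. Start from the open set $\mathcal{U}\subset{\rm Fol}^{(d)(n)}_{\CP}$ supplied by Theorem~\ref{FrankandJulio}. By Proposition~\ref{Naturefoliations_CPn} and the discussion around Definition~\ref{core_foliation}, every $\fol_0\in\mathcal{U}$ is realized as the core foliation of $X=X^d+{\rm h.o.t.}$ as soon as $X^d$ is a homogeneous degree-$d$ representative of $\fol_0$ on $\C^{n+1}$; this makes item~(3) automatic, while item~(1) follows from Lemma~\ref{lemma1_dicritical} since such an $X^d$ is never a multiple of the radial field $R$. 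As $X^d$ is determined by $\fol_0$ only up to $X^d\mapsto cX^d+gR$ (with $c\in\C^{\ast}$ and $g$ homogeneous of degree $d-1$), I would first exploit this freedom to make $X^d$ have an isolated singularity at $0\in\C^{n+1}$. Since $\fol_0$ has only hyperbolic, hence simple, singularities, it has exactly $N:=(d^{n+1}-1)/(d-1)$ of them, say $p_1,\dots,p_N$, by the standard count; the zero set of any homogeneous representative of $\fol_0$ lies in the union of the $N$ lines over these points, so clearing those zeros amounts to finitely many conditions, each excluding a hyperplane in the positive-dimensional space of such $g$'s. Once $\{X^d=0\}=\{0\}$, the elementary estimate $|X(x)|\geq c_0|x|^{d}-C_0|x|^{d+1}$ near the origin shows that $X=X^d+{\rm h.o.t.}$ also has an isolated singularity there, so $\tilf$ has no singular point off $\pi^{-1}(0)$; and in $\pi^{-1}(0)$ the singular points of $\tilf$ are exactly the $p_j$ (a point of $\pi^{-1}(0)$ regular for $\fol_0$ has a nonzero leaf direction, hence is regular for $\tilf$). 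This is the count $N$ in item~(2).

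For the eigenvalue part of item~(2), I would compute, in the standard blow-up chart $\pi(x_1,u_2,\dots,u_{n+1})=(x_1,x_1u_2,\dots,x_1u_{n+1})$, the reduced representative of $\tilf$ obtained by dividing $\pi^{\ast}X$ by $x_1^{d-1}$. At a singular point $p_j\in\pi^{-1}(0)$ its linear part is block triangular: the exceptional-fibre direction $\partial/\partial x_1$ contributes the single eigenvalue equal to the radial component of $X^d$ along the line over $p_j$ --- the scalar $\mu_j$ with $X^d(v_j)=\mu_j v_j$ for a lift $v_j$ of $p_j$, which is nonzero by the previous step --- while the complementary block is exactly the linear part of $\fol_0$ at $p_j$. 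Hence the eigenvalues of $\tilf$ at $p_j$ are the $n$ eigenvalues of $\fol_0$ at $p_j$ together with this transverse eigenvalue. Crucially, all of these data --- invariance of the exceptional divisor, the number and location of the singular points of $\tilf$, their eigenvalue tuples, and the core foliation --- depend only on $X^d$ and not on the higher order terms; and since Poincar\'e's linearization theorem turns ``linear part in the Poincar\'e domain and non-resonant'' into ``holomorphically linearizable germ'', those higher order terms play no role in items~(1)--(3). This is exactly the claimed stability under higher order perturbations.

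It remains to produce a non-empty open set $V$ of admissible $X^d$. The conditions to impose are: $X^d$ induces a foliation in the open set $\mathcal{U}$; $X^d$ has an isolated singularity at the origin; and the $N$ eigenvalue tuples of $\tilf$ at its singular points are all hyperbolic and non-resonant. Each is an open condition --- for the last one, note that in the Poincar\'e domain only finitely many resonance relations can hold near a given tuple, so ``Poincar\'e domain and non-resonant'' is open --- so $V$ is open and everything reduces to showing $V\neq\emptyset$. This is the step I expect to be the main obstacle: for a fixed $\fol_0$ the transverse eigenvalues $\mu_j$ can be moved only within the image of the evaluation map $g\mapsto(g(v_1),\dots,g(v_N))$ on homogeneous polynomials of degree $d-1$, which is in general a proper subspace of $\C^N$, so one cannot prescribe them independently at the $N$ singular points. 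I would get around this by also varying $\fol_0$ inside $\mathcal{U}$: a generic member of the open set $\mathcal{U}$ already has non-resonant eigenvalue tuples at its singular points (the resonant locus being a countable union of proper subvarieties, since the eigenvalues of degree-$d$ foliations on $\CP^n$ obey only global identities of Baum--Bott type), and hyperbolicity of the tangential part leaves an open cone of admissible transverse eigenvalues at each $p_j$; a dimension count combining the parameters of $\mathcal{U}$ with the $cX^d+gR$ freedom then shows these cones can be met simultaneously. Having found one admissible $X^d$, openness of all the conditions yields the open set $V$, and items~(1)--(3) hold for every $X=X^d+{\rm h.o.t.}$ with $X^d\in V$.
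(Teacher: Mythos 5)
Your argument follows the same route as the paper's own justification, which consists precisely of the informal discussion preceding the statement: realize a foliation of $\mathcal{U}$ as the core foliation of $X = X^d + {\rm h.o.t.}$, exploit the freedom $X^d \mapsto cX^d + gR$ to make the singularity of $X^d$ isolated and the transverse eigenvalues $\mu_j$ generic, and observe that the invariance of the exceptional divisor, the location and linear parts of the singular points of $\tilf$, and the core foliation all depend only on $X^d$. If anything, your write-up is more careful than the paper's on the two genuinely delicate points --- that linearizability should be secured via the Poincar\'e domain so that it becomes an open condition insensitive to the higher order terms, and that the evaluation map $g \mapsto (g(v_1), \ldots, g(v_N))$ is not surjective so the transverse eigenvalues cannot be prescribed independently --- although your resolution of the latter (the dimension count obtained by also varying the foliation inside $\mathcal{U}$) remains a sketch, exactly as the paper's ``it is easy to conclude'' does.
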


Let us point out that the formula in item~(2) for the number of singular points of $\tilf$, i.e., for a degree~$d$
foliation on $\CP^n$ all of whose singular points are hyperbolic is well known and can be proved in a variety of
ways. For example, by choosing affine coordinates yielding a ``hyperplane at infinity'' on which the foliation
has no singular point and then applying B\'ezout theorem to the corresponding polynomial vector field representing
the foliation in the above indicated affine coordinates.

\begin{remark}
	Naturally the content of Theorem~\ref{summary_chaosinfoliations} can be adapted to germs of codimension~$1$
	dicritical foliations on $(\C^3,0)$.
\end{remark}

To close this section, it is convenient to make a parallel with the discussion in Section~\ref{discussingdimension2}
for singularities of foliations on $(\C^2,0)$ so as to better appreciate the difficulties arising from the existence
of wild core dynamics as stated in Proposition~\ref{summary_chaosinfoliations}.

\bigbreak

\noindent {\bf (A') Generalizations of Seidenberg theorem to $(\C^n,0)$}. The problem is wide open for $n\geq 4$
though sharp desingularization theorems are now established for $n=3$. The topic is of undisputed interest since
virtually all general statements about
singularities rely, directly or indirectly, on a suitable ``resolution theorem''. Yet, for $n \geq 3$, the ability
to obtain a model of the foliation where all singular points are ``simple enough'' might still be a long way
off of providing an accurate description of the singularity in question.

To substantiate the above claim, it suffices to consider the local foliations $\fol$ on $(\C^n,0)$ provided
by Theorem~\ref{summary_chaosinfoliations}. The blow-up $\tilf$ of $\fol$ at the origin provides a birational
model for $\fol$ possessing only ``simple singular points'': in fact, all singularities of $\tilf$ are hyperbolic
and linearizable. In other words, the local behavior of $\tilf$ around each of its singular points is essentially
trivial and promptly available. The very complicated dynamical behavior of $\fol$ around the origin is, however,
encoded in its core dynamics (cf. Lemma~\ref{example_homogeneousvectorfield+radial}) but the global nature
of the core dynamics prevents resolution theorems to yield any insight into this dynamical system.

\bigbreak

\noindent {\bf (B') Taming the core dynamics}. If one is to fully understand the structure of a foliation around a
singular point, then an accurate description of its core dynamics needs to be envisaged. If Proposition~\ref{summary_chaosinfoliations}
tells us this is a kind of unrealistic goal, it also raises the question of ``selecting'' those classes of singular
points allowing a more detailed description. This is a very interesting point as it hints at considering the connections between
singularity theory and the remainder of Mathematics or, even, Physics. Singularities playing a special role
in problems from Geometry, Complex Analysis and/or Integrable Systems are likely to be amenable to a more complete analysis.
Examples of these situations will be discussed in the forthcoming sections.

In terms of ``taming core dynamics'', of course the ideal situation would be to have a core foliation defining an ``integrable
system'' in some suitable sense. Alternatively, for a number of problems, it might be enough to ensure the existence of
(``sufficiently many'') algebraic invariant curves. An important issue involving invariant curves is that more often than not
the dynamics of the foliation in question can be investigated in more details on a neighborhood of them, especially when their
fundamental group contains more than a single generator. This study, whereas of more global nature, is somehow akin to
``Mattei-Moussu pseudogroup technique'' mentioned in Section~\ref{discussingdimension2}. Interesting examples where this point
of view have successfully been employed - even outside the scope of singularity theory - include \cite{classicalilyashenko},
\cite{lorayandjulio}, and \cite{Guillot-Rebelo}.

\bigbreak

\noindent {\bf (C') Dynamics of pseudogroups acting on $(\C^n,0)$}. The perspective of focusing in the local dynamics
arising from the holonomy of an invariant algebraic curve in higher dimensions naturally leads us towards considering the dynamics
of subgroups of ${\rm Diff}\, (\C^n,0)$, $n \geq 2$. As was to be expected, many new dynamical phenomena arise for $n\geq 2$ compared to the
situation $n=1$. As pointed out in Section~\ref{discussingdimension2} much is known about the dynamics of subgroups
of ${\rm Diff}\, (\C,0)$ whereas for subgroups of ${\rm Diff}\, (\C^n,0)$, the theory is still in its early stages.

Nonetheless, we mention that generalizations to higher dimensions of Mattei-Moussu's theorem on groups with finite orbits is by now
well understood, see \cite{bolSBM}, \cite{diffC2}, \cite{Ribon}. These results are likely to have impact in problems about
existence of first integrals but they might also provide insight in higher dimensional versions of the so-called ``analytic
limit set'', see \cite{camacho}.

Finally a major issue in the theory is to find sharp conditions to extend to higher dimensions the Shcherbakov-Nakai theory
of local vector fields in the ``closure of the group'' \cite{Sh1}, \cite{Sh2}, \cite{Na}. Very little is known about this question
aside from some results in \cite{lorayandjulio} which rely on the existence of a hyperbolic contraction for the group in question.
This assumption looking rather far from sharp, the topic appears to be ripe for significant progress.


\section{Resolution theorems in dimension~$3$}\label{resolution_blowups}

In the remainder of this survey we will discuss relatively recent progress in some of the several aspects of
singularity theory. This section is devoted to ``resolution theorems'' while the next two sections will basically review
the general problem of invariant varieties and the study of a particular and important class of singular points,
namely the semicomplete ones. In the course of these discussions
theorems providing - at various degrees - some control on the core dynamics in question will play a prominent role.

As previously indicated, theorems on reductions of singular points are always of paramount importance in the theory whether
or not there are major difficulties lying out of their reach (e.g. complicated core dynamics). For foliations defined
on complex $2$-dimensional manifolds (or varieties), Seidenberg's theorem provides a sharp {\it reduction of singularities
theorem}\, (a.k.a. ``resolution theorem'') that is particularly easy to manipulate. Beyond dimension~$2$, decisive results
exist only in dimension~$3$, where it is already necessary to distinguish between foliations of dimension~$1$ and foliations
of codimension~$1$. This section is devoted to reviewing and explaining the main ``resolution theorems'' for
$1$-dimensional foliations in dimension~$3$.

Owing to the classical Hironaka resolution theorem, we can assume that our singular foliations are always defined on manifolds
(i.e. smooth complex spaces). Furthermore, since the problems are local, we may assume them to be defined on a neighborhood
of the origin of $\C^n$. The case $n=2$ being settled by the above mentioned theorem of Seidenberg, we assume from now on that
$n=3$, i.e., our foliations are defined on a neighborhood of the origin of $\C^3$.

Working on $(\C^3,0)$, we need to distinguish between foliations of dimension~$1$ and foliations of codimension~$1$. The case
of codimension~$1$ foliations was settled earlier in \cite{cano}. However, the story involving
foliations of dimension $1$ - the main object of this survey - is longer and more elusive.

Resolution results for foliations of dimension 1 on $(\C^3,0)$ have first appeared in \cite{canoLNM}, where the author proves
the existence of a formal local uniformization theorem. In this work, the author also hints at the existence of a
new phenomenon involving
singularities possessing a certain formal separatrix (i.e. a formal curve invariant by the foliation) which posed serious difficulties
to resolve the singularity by means of standard blow ups. The issue was made clear by Sancho and Sanz who
provided explicit examples of
foliations in $(\C^3,0)$ that cannot be reduced by sequences of standard blow-ups centered at sets contained
in the singular loci of the initial foliations and its transforms.

After the examples found by Sancho and Sanz, the next truly major result in the area is due to D. Panazzolo \cite{daniel2}. In \cite{daniel2},
Panazzolo considers singularities of real foliations in (real) dimension~$3$. He works in the real setting,
rather than in the complex one, mostly due to the fact that his original motivation lied in Hilbert's problem
about the number of limit cycles of a polynomial vector field on
$\R^2$. In his work, Panazzolo shows that the corresponding germs of foliations can always be turned into a
foliation all of whose singular points are elementary {\it by means of a finite sequence of weighted blow ups
centered at singular sets}. The proof is constructive and actually provides
an algorithm to obtain the desired reduction of singularities. Later, relying on Panazzolo's algorithm
introduced in~\cite{daniel2},
McQuillan and Panazzolo were able to provide
a very satisfactory answer to the generalization of Seidenberg's theorem for foliations on $(\C^3,0)$ in \cite{danielMcquillanpreprint}, \cite{danielMcquillan}.

The preprint \cite{danielMcquillanpreprint} was made available in 2007 and a few years later,
Cano, Roche, and Spivakovsky revisited the topic
from the point of view of valuation theory, see \cite{C-R-S}.
Their strategy is in line with Zariski's general approach to desingularization problems and, hence, is
essentially divided in two parts. First, for a given foliation, we seek to ``simplify''
only the singularities lying in the center of a given valuation
(identified with its transforms, or extensions, through blow-ups). Resolution results for singularities lying in the center of
a valuation are often referred to as {\it local
uniformization theorems} and the first part of Zariski approach aims at obtaining this type of statement.
Once a convenient local uniformization result is obtained, the second part of Zariski approach deals
with its {\it globalization}. More precisely,
once it is proved that for every valuation $\nu$, the singularities lying in the center of $\nu$ can be
simplified (in some appropriate sense),
we try to conclude that, in fact, all singularities of the foliation can simultaneously be simplified in the same sense.
When it comes to applying this point of view to singularities of foliations most of the difficulties related to
the globalization procedure are handled pretty well a very general
(axiomatic) {\it gluing theorem} due to O. Piltant \cite{Piltant}. Owing to Piltant's theorem, it is fair to say
that the fundamental difficulty of resolution problems for foliations, in arbitrary dimensions, revolves
around obtaining suitable local uniformization theorems.

In view of what precedes, the content of \cite{C-R-S} can roughly be summarized by claiming
{\it the existence of a birational model for the initial foliation where the singular points
are log-elementary}. The reader is referred to \cite{C-R-S} for the definition of log-elementary singular points.
For our purposes, it suffices to know that such singularities are, at worst, {\it quadratic} in the sense that
they are locally given by a representative vector fields with non-zero second-jet at the singular point in question.
One of the goals of \cite{helenajulio_RMS} was to complete the work of Cano-Roche-Spivakovsky by deriving
``final models'' similar to those of \cite{danielMcquillan}, in order to obtain
a global resolution theorem comparable to \cite{danielMcquillanpreprint},
\cite{danielMcquillan} through Zariski classical approach.

We will compare the resolution theorems for foliations obtained by McQuillan-Panazzolo in~\cite{danielMcquillan} and by ourselves
in \cite{helenajulio_RMS}, they correspond to Theorem~$2$ and Theorem~A of the respective papers. In particular, it will be seen
that in the context of foliations the two results are pretty much
equivalent and can be summarized by the following assertion: {\it given a singular holomorphic $1$-dimensional foliation $\fol$ on
$(\C^3,0)$, there exists a birational model of $\fol$ where all singular points
are elementary}. In this sense, the only difference between the theorems in question will be down
to the way in which the desired birational model is constructed.


\subsection{Persistent nilpotent singularities}\label{subsec:pns}

As already mentioned, Sancho and Sanz have showed the existence of foliations in $(\C^3,0)$ that cannot be reduced by
sequences of standard blow-ups with centers contained in
the singular set of the initial foliation and its transforms. In fact, their result is slightly more general in the sense
that we may allow for blow-ups of invariant centers not necessarily contained in the singular locus. As a matter
of fact, they have provided a $3$-parameter family of foliations whose elements cannot be turned into a foliation
all of whose singularities are elementary by means of blow ups centered in the singular loci and whose generic element
cannot be turned into a foliation with elementary singular points even if invariant centers are allowed.
This family of foliations is represented by the family of vector fields $X_{\alpha,\beta,\lambda}$ taking on
the form
\begin{equation}\label{example_Sancho_Sanz}
X_{\alpha,\beta,\lambda} = x\left( x \frac{\partial}{\partial x} - \alpha y \frac{\partial}{\partial y}  - \beta z \frac{\partial}{\partial z}\right)
+ xz \frac{\partial}{\partial y} + (y-\lambda x) \frac{\partial}{\partial z} \, .
\end{equation}
Accordingly, foliations in this family will be denoted by $\fol_{\alpha,\beta,\lambda}$.
The foliations $\fol_{\alpha,\beta,\lambda}$ are {\it nilpotent} at the origin in the sense that so are the vector fields
$X_{\alpha,\beta,\lambda}$. For reference, it is convenient to make accurate the notion of {\it nilpotent foliation}.

\begin{defi}\label{nilpotentsingularpoint_foliation}
A ($1$-dimensional) holomorphic foliation is said to have a nilpotent singularity at a singular point $p$ if
its representative vector field around~$p$ has non-zero nilpotent linear part at~$p$.
\end{defi}

The above notion of nilpotent singularity is well defined since it does not depend on the choice of the representative
vector field. Also, whenever no misunderstanding about the singular point in question is possible, we will
abridge notation by simply saying that
{\it $\fol$ is a nilpotent foliation}. Going back to the nilpotent foliations $\fol_{\alpha,\beta,\lambda}$, we note that the
plane $\{x=0\}$ is invariant by them and that it contains
the singular set of $\fol_{\alpha,\beta,\lambda}$ which coincides with the axis $\{x = y = 0\}$. Recalling that
a singular point is said to be {\it elementary} if the representative vector field possesses at least one eigenvalue
different from zero, we now have the following:

\begin{prop}\label{SanchoSanz_examples}
The foliations in the family $\fol_{\alpha,\beta,\lambda}$ cannot be turned into a foliation all of whose singular points
are elementary by means of a sequence of standard blow-ups with centers contained in singular sets.
\end{prop}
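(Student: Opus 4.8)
The plan is to reproduce the persistence-of-nilpotence mechanism isolated by Sancho and Sanz. Write $X=X_{\alpha,\beta,\lambda}=x^2\partx+x(z-\alpha y)\party+(y-\lambda x-\beta xz)\partz$. A first routine inspection records the data we will track: $\mathrm{Sing}(X)=\{x=y=0\}$ is the $z$-axis, the plane $\{x=0\}$ is invariant (there $X=y\partz$), and $X$ has a nonzero nilpotent $1$-jet at $0$, so $\fol_{\alpha,\beta,\lambda}$ is nilpotent at the origin in the sense of Definition~\ref{nilpotentsingularpoint_foliation}. The first step is to look for a formal invariant curve transverse to $\{x=0\}$, say $y=\sum_{k\geq1}a_kx^k$, $z=\sum_{k\geq1}b_kx^k$. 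Imposing tangency, i.e. $xy'=z-\alpha y$ and $x^2z'=y-\lambda x-\beta xz$, and matching coefficients forces $a_1=\lambda$, $b_k=(k+\alpha)a_k$ and $a_k=(k-1+\alpha)(k-1+\beta)a_{k-1}$ for $k\geq2$, whence $a_k=\lambda\prod_{j=1}^{k-1}(j+\alpha)(j+\beta)$. For $\lambda\neq0$ and $\alpha,\beta$ not negative integers this has $(k!)^2$-type growth, so the resulting curve $\Gamma$ is a \emph{divergent} (Gevrey-$2$) formal separatrix, and in particular $\Gamma$ is never an admissible center, not being an analytic set at all; for the remaining parameters $\Gamma$ is at worst an analytic curve not contained in $\mathrm{Sing}(\fol_{\alpha,\beta,\lambda})$, hence again never an admissible center, and for $\lambda=0$ one checks analogously that there is no nonzero formal separatrix transverse to $\{x=0\}$ and the foliation is even more degenerate.

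The heart of the argument is a \emph{persistence lemma}: there is a class $\mathcal{S}$ of ``Sancho--Sanz type'' germs — nilpotent at a point, endowed with a distinguished invariant smooth surface through that point, and put in the shape of $X_{\alpha,\beta,\lambda}$ in coordinates adapted to that surface — which contains all the $\fol_{\alpha,\beta,\lambda}$, is closed under the blow-ups considered here, and has the property that for \emph{any} blow-up $\pi$ centered at a subset $C\subseteq\mathrm{Sing}(\fol)$ with $\fol\in\mathcal{S}$, the transform $\widetilde\fol=\pi^*\fol$ again has a point $\widetilde p$ in its singular set with $\widetilde\fol\in\mathcal{S}$ at $\widetilde p$; moreover $\widetilde p$ lies on the strict transform of the persistent formal curve (or, when $\lambda=0$, is canonically determined by the singular line). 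Since $\mathrm{Sing}(\fol)$ has codimension $\geq2$ in $\C^3$, an admissible center $C$ is either a point or an irreducible curve inside $\mathrm{Sing}(\fol)$, so the lemma reduces to two generating moves. Blowing up the singular $z$-axis, the chart $(u,v,w)\mapsto(u,uv,w)$ produces the representative $u^2\partial_u+(w-(\alpha+1)uv)\partial_v+u(v-\lambda-\beta w)\partial_w$, whose singular locus is the line $\{u=w=0\}$ and whose $1$-jet at every point of it has characteristic polynomial $-\mu^3$ and nonzero nilpotent part, so no such point is elementary. Blowing up a point of $\mathrm{Sing}$, the chart $(u,v,w)\mapsto(u,uv,uw)$ produces $u^2\partial_u+u(w-(\alpha+1)v)\partial_v+\big((v-\lambda)-(\beta+1)uw\big)\partial_w$, with singular locus the line $\{u=0,\,v=\lambda\}$, again entirely non-elementary with $-\mu^3$ at every point; here, when $\lambda=0$, one even recognises $X_{\alpha+1,\beta+1,0}$ exactly, after renaming the variables. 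In both charts the strict transform of $\Gamma$ meets the new singular line at a point carrying the structure of a member of $\mathcal{S}$ (with shifted parameters and an extra lower-order term), which closes the induction.

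Granting the persistence lemma, the Proposition is immediate: if some finite sequence of standard blow-ups, each centered in the singular locus of the current foliation, turned $\fol_{\alpha,\beta,\lambda}$ into a foliation with only elementary singularities, then applying the lemma at each step would produce, in the final foliation, a non-elementary (indeed nilpotent) singular point on the strict transform of the persistent curve — a contradiction.

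The main obstacle is the persistence lemma, and specifically the closure of $\mathcal{S}$ under arbitrarily long sequences of admissible blow-ups: after several blow-ups the singular locus becomes a configuration of lines meeting at points and the exceptional divisor accumulates several components, so one must pin down precisely which normal forms can occur, track the continued divergence (or at least the non-admissibility as a center) of the persistent formal curve, and verify by a finite but delicate chart-by-chart, center-by-center analysis that the nilpotent point together with its obstructing formal separatrix is always regenerated — including in the degenerate $\lambda=0$ stratum, where the curve must be replaced by the formal invariant object attached to the singular line, and in the strata where $\alpha$ or $\beta$ is a negative integer, where the curve is convergent but harmlessly so. Organising this bookkeeping by means of a weighted multiplicity of $\fol$ along the strict transform of $\Gamma$, together with Newton-polygon data of the $1$-jet, is what makes the conclusion robust under \emph{all} admissible sequences rather than merely the first blow-up.
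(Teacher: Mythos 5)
Your overall strategy --- show that the Sancho--Sanz family reproduces a nilpotent, hence non-elementary, singular point under every admissible blow-up and then induct --- is exactly the paper's, and your two chart computations (the transform $u^2\partial_u+(w-(\alpha+1)uv)\partial_v+u(v-\lambda-\beta w)\partial_w$ for the curve blow-up and $u^2\partial_u+u(w-(\alpha+1)v)\partial_v+(v-\lambda-(\beta+1)uw)\partial_w$ for the point blow-up, with the stated singular loci and nilpotent $1$-jets) are correct. But the proof as written has a genuine gap, which you yourself flag in your last paragraph: the ``persistence lemma'' is never proved, the class $\mathcal{S}$ is never defined, and for $\lambda\neq 0$ you do not verify that the new singular point is again of the required type --- you only assert it carries ``the structure of a member of $\mathcal{S}$ with shifted parameters and an extra lower-order term.'' Without a precise definition of $\mathcal{S}$ and a verified closure property, the induction does not close: exhibiting a non-elementary point in the \emph{first} transform says nothing about the second and subsequent ones, which is the whole content of the Proposition.

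The missing step is a short computation you already set up. In the point-blow-up chart the singular locus is $\{u=0,\,v=\lambda\}$; translating by $(u,v,w)\mapsto(u,\,v+\lambda,\,w+\mu)$ and choosing $\mu=\lambda(\alpha+1)$ kills the constant term in the $\partial_v$-component and the transform becomes \emph{exactly} $X_{\alpha+1,\beta+1,\lambda(\alpha+1)(\beta+1)}$ --- a member of the original family, with no extra lower-order term. This is the paper's punchline: for point blow-ups the class $\mathcal{S}$ can be taken to be the three-parameter family itself, and the induction closes immediately. (The curve-blow-up transform does have a linear part with a different nilpotent structure at the origin, so there an enlarged class or a separate check is genuinely needed; the paper also only asserts that case.) Note finally that for this Proposition, where centers are required to lie in singular sets, the formal separatrix $\Gamma$ and its Gevrey-$2$ divergence --- which you compute correctly --- are not needed at all; they only become relevant for the stronger statement that allowing invariant analytic centers outside the singular locus still does not permit a resolution.
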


\begin{proof}[Sketch of Proof]
Consider the one-point blow-up centered at the origin of $(\C^3,0)$ and let $\pi$ stands for the blow-up map. Let then $(x,u,v)$ be the
affine coordinates for the blown-up space where $y = ux$ and $z = vx$. The pull-back $\pi^{\ast} X_{\alpha,\beta,\lambda}$ of the
vector field $X_{\alpha,\beta,\lambda}$ is given by
\[
\pi^{\ast} X_{\alpha,\beta,\lambda} = x\left( x \frac{\partial}{\partial x} - (\alpha + 1) u \frac{\partial}{\partial u}  - (\beta + 1) v
\frac{\partial}{\partial v}\right) + xv \frac{\partial}{\partial u} + (u - \lambda) \frac{\partial}{\partial v} \, ,
\]
whose expression is similar to the expression of $X_{\alpha,\beta,\lambda}$. In fact, the main difference between the two expressions
concerns the last term. Note, however, that the origin of the present coordinates is not contained in the singular set of the induced
foliation, which is given by $\{x=0 , u=\lambda\}$. Thus, if we consider the translation $T(x,\overline{u}, \overline{v}) = (x,
\overline{u} + \lambda, \overline{v} + \mu)$, the pull-back of $\pi^{\ast} X_{\alpha,\beta,\lambda}$ through $T$ is given by
\[
x\left( x \frac{\partial}{\partial x} - (\alpha + 1) \overline{u} \frac{\partial}{\partial
\overline{u}}  - (\beta + 1) \overline{v} \frac{\partial}{\partial \overline{v}}\right) + x(\overline{v} + \mu - \lambda(\alpha + 1))
\frac{\partial}{\partial \overline{u}} + (\overline{u} - \mu(\beta + 1)x) \frac{\partial}{\partial \overline{v}} \, .
\]
In the particular, if we choose $\mu = \lambda(\alpha + 1)$, the vector field in question coincides with the vector field
$X_{\alpha+1,\beta+1,\lambda(\alpha + 1)(\beta+1)}$. In other words, the
transformed foliation of $\fol_{\alpha,\beta,\lambda}$ contains
a nilpotent singular point belonging to the (initial) Sancho-Sanz family. It can be checked that the
same issue occurs if the blow-up centered at the curve of singular points of $\fol_{\alpha,\beta,\lambda}$
is considered.
\end{proof}

Summarizing what precedes, every sequence of blow ups as above applied to a foliation in Sancho-Sanz family
lead to a foliation having a singular point where the foliation is locally conjugate to another foliation
in the initial family. In particular, all transformed foliations will exhibit a nilpotent singular point. This
nilpotent singular point has a geometric interpretation naturally related to the issues raised
by Cano in \cite{canoLNM} for a
resolution by standard blow-ups. In fact, by elaborating in the above indicated argument,
Sancho and Sanz have shown that the parameters $\alpha,\beta,\lambda$ can be chosen so that the
foliation associated with the vector field $X_{\alpha,
\beta,\lambda}$ possesses a {\it strictly formal separatrix $S = S_0$ through the origin}. Moreover, given a sequence of blow ups
as before, the sequence of points $\{ p_n\}$ in the exceptional divisors corresponding to the position of the
(persistent) nilpotent singularity is determined by the sequence of transforms $\{ S_n \}$ of the formal separatrix $S=S_n$.
We should still note that, the fact that
every separatrix $S_n$ is stricty formal says that even in the case we allow blow-ups to be centered at
analytic invariant curves that {\it are not}\, necessarily contained in singular set of the foliation, a
resolution procedure still does not exist.

In terms of the relation between foliations and - possibly formal - separatrices, a natural object that plays an important
role is the notion of {\it multiplicity of the foliation along the separatrix}. Let $X$ be a representative vector
field of $\fol$ and $\varphi$ the Puiseux parametrization of $S$. Let $\varphi^{\ast} X|_S$ stands for the pull-back of the restriction of
$X$ to $S$. If $\varphi^{\ast} X|_S = g(t) \partial /\partial t$, then the multiplicity of $\fol$ along $S$ is defined as the order of $g$
at $0 \in \C$ (assuming that $\varphi(0)$ coincides with the singular point).

In \cite{helenajulio_RMS}, we introduced the notion of {\it persistent nilpotent singular point} which is as follows.

\begin{defi}
A nilpotent singular point $p_0$ of a foliation $\fol_0$ is said to be persistent
if there exists a formal separatrix $S_0$ for $\fol_0$ through $p_0$ such that for every sequence of blowing-ups
\[
\fol_0 \stackrel{\pi_1} \longleftarrow \fol_1 \stackrel{\pi_2} \longleftarrow \cdots
\stackrel{\pi_l}\longleftarrow \fol_n \,
\]
where $\fol_i$ stands for the transformed of $\fol_{i-1}$ through the (standard) blow-up centered at some $C_{i-i} \subseteq {\rm Sing} \,
(\fol_{i-1})$ containing the point $p_{i-1}$ (selected by the transformed separatrix $S_{i-1}$, in the sense that it corresponds to the intersection
of $S_{i-1}$ with the excetional divisor), the following conditions are satisfied
\begin{itemize}
\item[(a)] the singular points $p_i$ are all nilpotent singular points for the corresponding foliations;
\item[(b)] the multiplicity of $\fol_i$ along $S_i$ does not depend on $i$.
\end{itemize}
\end{defi}

The multiplicity of a foliation $\fol$ along a separatrix (possibly a formal one) $S$ is defined as follows. Let $X$ be a representative vector
field of $\fol$ and $\varphi$ the Puiseux parametrization of $S$. Let $\varphi^{\ast} X|_S$ stands for the pull-back of the restriction of
$X$ to $S$. If $\varphi^{\ast} X|_S = g(t) \partial /\partial t$, then the
{\it multiplicity of $\fol$ along $S$}, ${\rm mult} (\fol, S)$, is defined as the order of $g$
at $0 \in \C$ (assuming that $\varphi(0)$ coincides with the singular point).

The role played by the notion of multiplicity of a foliation along a separatrix is closely related to its natural behavior
under blow ups. Recall that the {\it order of a foliation at a singular point}\, is nothing but the order of a
representative vector field $X$ , i.e. the degree of the first non-zero jet of $X$, at the singular point in question.
With this notation, assume that $\tilf$ is obtained by blowing up $\fol$ at a singular point $p$.
Assume also that $S$ is a (formal) separatrix of $\fol$ at $p$ and denoted by $\widetilde{S}$ the transform of $S$ which
yields a (formal) separatrix for $\tilf$ at the point $\widetilde{p}$. Then we have:
\begin{equation}\label{decreasingmultiplicity}
{\rm mult} (\tilf, \widetilde{S})  \leq  {\rm mult} (\fol, S)
\end{equation}
{\it with equality holding if and only if the order of $\fol$ at $p$ equals~$1$}.
The same formula holds for blowing ups centered
at a curve contained in the singular set of $\fol$, up to considering a variant of the notion of
``order of the foliation'' that
is adapted to the center of the blow up, for details see \cite{helenajulio_RMS} or the discussion
at the end of Section~\ref{separatricescod1}.

It is easy to understand the interest of the multiplicity of a foliation $\fol$ along a separatrix from the above perspective:
if the existence of a (formal) separatrix $S$ is ensured, then its multiplicity will drop strictly providing that the
order of $\fol$ at the singular point in question is greater than or equal to~$2$. Moreover, once this decreasing
sequence stabilizes, then the corresponding singular point is either elementary or {\it nilpotent}. From this it also follows
that it is useful to understand {\it persistent nilpotent singular points}\, in order to establish resolution
theorems for foliations.

Clearly, in dimension~$2$, persistent nilpotent singular points do not exist as follows from Seidenberg theorem. In dimension~$3$,
their existence is established by the above discussed examples due to Sancho and Sanz. A characterization of these
points in dimension~$3$ in terms of normal forms can be formulated as follows.

\begin{theorem}
\label{Microlocalversion_TheoremA}
\noindent {\rm \cite{helenajulio_RMS}}\hspace{0.1cm}
Assume that $\fol$ cannot be resolved by a finite sequence of standard blow-ups centered at singular sets. Then there exists
a sequence of one-point blow ups (centered at singular points) leading to a foliation $\fol'$ with a singular point $p$ around
which $\fol$ is given by a vector field of the form
$$
(y + zf(x,y,z)) \frac{\partial}{\partial x} + zg(x,y,z) \frac{\partial}{\partial y} + z^n \frac{\partial}{\partial z}
$$
for some $n \geq 2$ and holomorphic functions $f$ and $g$ of order at least~$1$ with $\partial g / \partial x (0,0,0) \ne 0$.
Furthermore we have:
\begin{itemize}
  \item[(1)] The resulting foliation $\fol'$ admits a formal separatrix at $p$ which is tangent to the $z$-axis;

  \item[(2)] The exceptional divisor is locally contained in the plane $\{ z=0\}$.
\end{itemize}
\end{theorem}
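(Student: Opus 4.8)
The plan is to split the statement into two parts: first, to show that the hypothesis forces, after finitely many one-point blow-ups, the appearance of a \emph{persistent nilpotent singular point} in the sense introduced above; and second, to prove that any such singular point on $(\C^3,0)$ can be brought, after possibly a few more one-point blow-ups, to the displayed normal form, from which properties~(1) and~(2) are read off directly.

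For the first part, assume $\fol$ cannot be resolved by standard blow-ups centered at singular sets. By Cano's formal uniformization results for $1$-dimensional foliations on $(\C^3,0)$ \cite{canoLNM} (see also the discussion in \cite{C-R-S}), the obstruction to resolution is concentrated along a formal separatrix: there is a formal invariant curve $S_0$ through the origin whose successive transforms $S_i$ select points $p_i$ on the exceptional divisors at which the transformed foliations $\fol_i$ remain singular and non-elementary, no matter which blow-ups centered at invariant centers through the points $p_i$ are performed. Along such a sequence consider the integer ${\rm mult}(\fol_i,S_i)$. By inequality~\eqref{decreasingmultiplicity} this is a non-increasing sequence of non-negative integers, hence eventually constant, and once it is constant the equality clause of~\eqref{decreasingmultiplicity} forces the order of $\fol_i$ at $p_i$ to equal~$1$ from that index on. An order-$1$ singular point that is not elementary has non-zero nilpotent linear part (Definition~\ref{nilpotentsingularpoint_foliation}); hence after finitely many one-point blow-ups we reach a foliation $\fol'$ with a persistent nilpotent singular point $p$, and it only remains to normalize $\fol'$ near~$p$.

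For the second part, recall that a non-zero nilpotent endomorphism of $\C^3$ is a single Jordan block of size two (rank one) or of size three (rank two). In the rank-two case one blows up once more at $p$ and inspects the transform in each chart — a computation of the same type as in Proposition~\ref{SanchoSanz_examples} — to see that the point selected by the transform of $S$ carries a linear part of strictly smaller rank; iterating, we may assume the nilpotent linear part has rank one and choose linear coordinates $(x,y,z)$ in which it equals $y\,\fpartx$. A representative vector field then reads $X=(y+a)\,\fpartx+b\,\fparty+c\,\fpartz$ with $a,b,c$ of order at least~$2$. Since $p$ lies on the exceptional divisor of the one-point blow-ups already performed, we may further arrange that this divisor is locally $\{z=0\}$ — this is property~(2) — and, a persistent nilpotent point being non-dicritical, the plane $\{z=0\}$ is invariant, so $z\mid c$. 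The tangent cone of the invariant formal curve $S$ is a line invariant by $y\,\fpartx$, hence contained in $\{y=0\}$; using that $S$ is transverse to $\{z=0\}$ (forced by the way its multiplicity behaves under the next blow-up) one normalizes this tangent cone to be the $z$-axis, which is property~(1). With $S$ tangent to the $z$-axis and $z\mid c$, the invariance of $S$ allows one to eliminate order by order from $a$, $b$ and $c$ every monomial not divisible by $z$ apart from the leading behaviour, yielding $a=zf$, $b=zg$, $c=z^{n}$ for some $n\ge 2$ and holomorphic $f,g$ of order at least~$1$. Finally, persistence of the nilpotent point under the next blow-up — which, as in Proposition~\ref{SanchoSanz_examples}, reproduces a vector field of the same shape after a translation — is possible precisely when $\partial g/\partial x(0,0,0)\ne 0$, and this completes the normal form.

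The delicate part is the last normalization step. Because $S$ is only a \emph{formal} separatrix it cannot be straightened by a convergent change of variables, so the removal of the $z$-free part of $a$, $b$ and $c$ must be carried out with holomorphic coordinate changes only, all the while preserving $\{z=0\}$ as the exceptional divisor and not destroying the Jordan form $y\,\fpartx$; this is exactly where the multiplicity inequality~\eqref{decreasingmultiplicity} together with the persistence hypothesis is used in an essential way, and it is what ultimately pins down the non-degeneracy $\partial g/\partial x(0,0,0)\ne 0$. A secondary and more routine difficulty is the rank reduction for rank-two Jordan blocks, which requires careful bookkeeping of which point the transformed separatrix picks out at each blow-up.
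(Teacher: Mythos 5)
Your overall architecture coincides with the strategy the survey attributes to \cite{helenajulio_RMS}: first produce a formal separatrix selecting an infinite sequence of non-elementary infinitely near points, then use the multiplicity inequality (\ref{decreasingmultiplicity}) to force the order down to~$1$ (hence a non-zero nilpotent linear part), and finally normalize. The multiplicity step itself is carried out correctly. But there is a serious gap at the very start: you attribute the existence of the formal separatrix $S_0$ --- with the property that its transforms select points at which the foliation can never be made elementary --- to ``Cano's formal uniformization results'' in \cite{canoLNM}. The survey states explicitly that this assertion was only a personal comment of F.~Cano harking back to \cite{canoLNM}, that providing a complete proof of it was ``a crucial point in the proof'' (it is Proposition~4 of \cite{helenajulio_RMS}), and that the argument eventually found is ``rather different from the one envisaged by F.~Cano.'' You are therefore citing as known precisely the statement that carries most of the weight of the theorem; without an independent argument for it, the proof does not get off the ground.

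The second part only describes the normalization rather than performing it. The claims that actually require proof and are not supplied are: (i) that only one component of the (a priori normal-crossing) exceptional divisor passes through the selected point $p$, so that it can be written as a single smooth plane $\{z=0\}$, which is the content of conclusion~(2); (ii) that the third component of the vector field can be brought to exactly $z^n$, and the first two to $y+zf$ and $zg$ with $f,g$ holomorphic, by \emph{convergent} coordinate changes, even though the separatrix $S$ is only formal and cannot be straightened; and (iii) that persistence pins down the non-degeneracy $\partial g/\partial x(0,0,0)\neq 0$. You explicitly flag (ii) and (iii) as ``the delicate part'' and indicate where the multiplicity inequality should enter, but deferring the key computation is not a proof. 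The rank reduction for a $3\times 3$ Jordan block is plausible (a direct computation as in Proposition~\ref{SanchoSanz_examples} does lower the rank in the relevant chart), though you would still need to verify that the point selected by the transformed separatrix is the one where the rank drops.
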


Theorem~\ref{Microlocalversion_TheoremA} deserves a couple of comments as it has a natural analogue in \cite{danielMcquillan},
namely:

\begin{itemize}
		
	\item In \cite{danielMcquillan}, the authors obtain an alternative characterization of persistent nilpotent
	singularities which are
	presented as singular points arising from elementary ones by means of a $\Z /2\Z$-orbifold singularity, see
	Section~\ref{subsec:daniel} for more details. It is relatively
	straightforward to establish the equivalence between their
	characterization and the normal forms provided by Theorem~\ref{Microlocalversion_TheoremA}.

    \item As in \cite{danielMcquillan}, an immediate consequence of the normal forms in Theorem~\ref{Microlocalversion_TheoremA} is
    that every persistent nilpotent singular point can immediately be turned into elementary ones by means of a single
    blow-up of weight~$2$, see \cite{helenajulio_RMS}.
\end{itemize}

In closing this section, let us point out the family of vector fields described in Theorem~\ref{Microlocalversion_TheoremA}
is a genuine extension of the Sancho-Sanz family, albeit one naturally obtained by following their construction.
Indeed, for persistent nilpotent singular points, the multiplicity of the foliation along the corresponding (formal)
separatrix is fully invariant under blow ups whose centers are contained in singular sets, cf, Formula~\ref{decreasingmultiplicity}.
In the Sancho-Sanz family, all multiplicities are equal to~$2$ so that for $n \geq 3$, Theorem~\ref{Microlocalversion_TheoremA}
yields examples that cannot be turned in Sancho-Sanz examples by means of successive blow ups (and conversely).
For example, vector fields in the family
$$
X_{\lambda} = (y - \lambda z) \frac{\partial}{\partial x} + zx \frac{\partial}{\partial y} +
z^3 \frac{\partial}{\partial z} \, ,
$$
with $\lambda \neq 0$, yield foliations $\fol_{\lambda}$ with persistent nilpotent singularities arising from a
(strictly) formal separatrix $S_{\lambda}$.  The multiplicity of $\fol_{\lambda}$ along $S_{\lambda}$ being equal to~$3$.


\subsection{The desingularization theorem of McQuillan-Panazzolo}\label{subsec:daniel}

The purpose of this paragraph is to explain in detail the desingularization theorem proved in \cite{danielMcquillan}.
As previously mentioned, McQuillan and Panazzolo work from the start in the category of weighted blow-ups, thus
not limiting themselves to stardard ones.
Unlike standard blow-ups, that keep the smooth nature of the space, weighted blow-ups lead to singular ambient spaces.
Yet, the singularities in question are of orbifold-type and hence of a rather simple nature. Whereas singular,
it should be pointed, that the ambient space
obtained after a sequence of finitely many weighted blow ups still is
{\it birationally equivalent to the initial one}. In particular, foliations can be transformed
without any restrictions under weighted blow ups to yield new birational models for them.

Keeping in mind the issues pointed out above, let us summarize the contents of \cite{danielMcquillan}.
The paper \cite{danielMcquillan} is essentially divided into two parts. Its first part is devoted to
prove that the algorithm of \cite{daniel2} - leading to a resolution of singularities by means of a
sequence of weighted blow-ups for real analytic foliations on
$(\R^3,0)$ - applies equally well in the general case of holomorphic foliations on $(\C^3,0)$. The algorithm
in question thus provides a birational model for the foliation on a space possessing orbifold-type singular
points. Note that, since we are dealing with (singular)
foliations on spaces with singular points of orbifold type, a word is needed about the meaning
of ``elementary singular points''. In this regard, the singular point of the foliation is said to
be {\it elementary} if it is given by an elementary singular point in a {\it orbifold coordinate}\, for the space.
In particular, there are an open set $U \subset \C^3$ and finitely ramified map from $U$ to a neighborhood of
the orbifold singular point such that when the foliation is pulled-back to the open set $U \subset \C^3$ only
elementary singular points are obtained.

In the second part of \cite{danielMcquillan}, the authors consider the problem of resolving the
singular points of the ambient space while keeping the singular points of the foliation elementary.
They prove that a resolution for such singularities exists except when
the singular point correspond to a $\Z /2\Z$-orbifold. In other words, they have shown that, given a
foliation $\fol$, it is always
possible to obtain a birational model for $\fol$ possessing only $\Z /2\Z$-orbifold singular points
and where all the singular points of the foliation in question are elementary. In turn, the singularities
associated with $\Z /2\Z$-orbifolds that appear in the end of
the previous construction can be turned into elementary singularities by means of a single blow-up
of weight~$2$. These singularities associated with $\Z /2\Z$-orbifolds actually correspond to the
previously described persistent nilpotent singular points.

The result in \cite{danielMcquillan} can thus be stated as follows:

\begin{theorem}\label{danielJDG}
\noindent {\rm \cite{danielMcquillan}}\hspace{0.1cm} Let $\fol$ be a singular holomorphic foliation on $(\C^3,0)$. There is a
sequence of weighted blow-ups
\begin{equation}
\fol_0 \stackrel{\pi_1}\longleftarrow \fol_1 \stackrel{\pi_2}\longleftarrow \cdots
\stackrel{\pi_l}\longleftarrow \fol_l \label{weightedblowups}
\end{equation}
satisfying the following conditions:
\begin{itemize}
  \item[(i)] The center of each weighted-blow up is strictly invariant with respect to the quasi-homogeneous filtration in question.

  \item[(ii)] The ambient space is an analytic space of dimension~$3$ whose singular points are $\Z /2\Z$-orbifold type
  and the total blow-up map $\pi_1 \circ \cdots \circ \pi_l$ is birational.

  \item[(iii)] The singular points of $\fol_l$ are elementary in orbifold coordinates.
\end{itemize}
\end{theorem}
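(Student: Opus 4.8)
The plan is to follow the two-stage scheme of \cite{daniel2} and \cite{danielMcquillan}. First one attaches to the germ of $\fol$ at a singular point a numerical invariant, taking values in a well-ordered set, which reaches its minimal (``elementary'') value precisely when the singular point is elementary; this invariant is built from the Newton polyhedron of a representative vector field $X$ in adapted local coordinates, refined by the structure of the principal part of $X$ supported on the characteristic face of that polyhedron. The heart of the argument will then be to show that, as long as the current singular point is not elementary, there is a canonical choice of a weighted blow-up --- whose center is a point or an invariant curve read off from the characteristic face, hence strictly invariant for the corresponding quasi-homogeneous filtration, which gives condition~(i) --- such that the transformed foliation has strictly smaller invariant at each of the new singular points lying over the center. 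Well-orderedness then forces the procedure to terminate after finitely many steps, producing a birational model of $\fol$ all of whose singular points are elementary.

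Carrying this out, I would organize the inductive step around a case analysis dictated by the combinatorics of the Newton polyhedron after the weighted blow-up: whether the exceptional divisor is invariant or dicritical (in the spirit of Lemmas~\ref{lemma1_dicritical} and~\ref{lemma2_dicritical}), whether the new singular point lies on one or on two components of the total exceptional divisor, and the arithmetic of the chosen weights. In each case one computes the transform of the principal part and checks that the relevant lexicographically ordered tuple of invariants strictly decreases; the bookkeeping of the invariant divisors is exactly what forces the appearance of orbifold charts, since weighted blow-ups do not preserve smoothness of the ambient space. The phrase ``elementary in orbifold coordinates'' in~(iii) is to be read as: after pulling back by the finite ramified map uniformizing the orbifold point, one recovers a genuine elementary singular point on a smooth open subset of $\C^3$. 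Once this first stage is complete one has a model on an orbifold where $\fol$ has only elementary singularities; the second stage resolves the ambient orbifold singularities while keeping the foliation singularities elementary, and one shows this is always possible except at $\Z/2\Z$-orbifold points --- giving~(ii) --- which are precisely the persistent nilpotent singularities of Theorem~\ref{Microlocalversion_TheoremA} and can, if desired, be cleared by a single weight-$2$ blow-up.

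The hard part will be twofold. The algorithm of \cite{daniel2} is written for real analytic foliations and leans on real-analytic machinery (curve selection, {\L}ojasiewicz inequalities, compactness of certain sets of non-terminating branches); making the whole scheme go through verbatim in the holomorphic setting --- the substance of the first half of \cite{danielMcquillan} --- requires replacing these ingredients by formal or algebraic substitutes while verifying that the invariant stays well defined, strictly decreasing, and well-ordered over $\C$. Even more delicate is pinning down the exact situations in which the invariant stabilizes without the singularity becoming elementary: these are precisely the persistent nilpotent singular points, and one must show both that they are forced into the $\Z/2\Z$-orbifold normal form and that this normal form is stable under the blow-up sequences selected by the algorithm. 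Here the (possibly strictly formal) separatrix enters decisively, since the location of the persistent nilpotent point at each stage is prescribed by the successive transforms of this separatrix together with the invariance of the multiplicity of the foliation along it.

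Finally, passing from these local statements to the global birational model claimed in the theorem is the globalization half of the program, handled by a gluing argument of the kind axiomatized by Piltant \cite{Piltant}: the local resolution at each point is canonical enough --- centers invariant and locally determined --- that the separate local sequences of weighted blow-ups assemble into one global sequence (\ref{weightedblowups}). An alternative route, through valuation-theoretic local uniformization and yielding the comparable statement of \cite{helenajulio_RMS}, is outlined in Sections~\ref{subsec:pns} and~\ref{subsec:daniel}. The full details, which are extensive, are in \cite{daniel2} and \cite{danielMcquillan}.
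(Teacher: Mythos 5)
Your sketch follows the same two-stage scheme that the paper itself attributes to \cite{danielMcquillan} in Section~\ref{subsec:daniel} (the paper states Theorem~\ref{danielJDG} as a cited result and only summarizes its proof): first run Panazzolo's algorithm \cite{daniel2}, transported from the real-analytic to the holomorphic category, with a well-ordered invariant built from Newton data that strictly decreases under canonically chosen weighted blow-ups with strictly invariant centers; then resolve the resulting ambient orbifold singularities while keeping the foliation singularities elementary, which succeeds except at $\Z/2\Z$-orbifold points, and these are exactly the persistent nilpotent singularities of Theorem~\ref{Microlocalversion_TheoremA}, removable by a single weight-$2$ blow-up. On all of this you are in agreement with the paper.

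The one point where you deviate from the actual architecture of \cite{danielMcquillan} is the globalization step. You propose to assemble the local sequences of weighted blow-ups via a Piltant-style gluing theorem \cite{Piltant}, but that is the mechanism of the \emph{other} proof --- the Zariski-style route of \cite{C-R-S} and \cite{helenajulio_RMS} leading to Theorem~\ref{teo:A}, which first proves a local uniformization statement along valuations and only then globalizes by Piltant's axiomatic patching. Panazzolo's algorithm is constructive and global from the outset: the invariant and the admissible centers are defined uniformly over the whole singular locus, so the centers of successive blow-ups are globally defined analytic sets and no abstract gluing is invoked. This is precisely the distinction the paper draws between the two resolution theorems (``the only difference between the theorems in question will be down to the way in which the desired birational model is constructed''), so conflating them obscures what is arguably the main structural difference between the two proofs. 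Apart from this misattribution, the sketch is a faithful outline of the strategy of \cite{danielMcquillan}.
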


Let us close this paragraph with a comment concerning item~(i) of Theorem~\ref{danielJDG}. Note that this item is not
emphasized in \cite{danielMcquillan} though it is a characteristic property of Panazzolo's algorithm in \cite{daniel2}.
Whereas, as far as foliations are concerned, this is a minor issue - as it would also be the case of blow ups centered
away from the singular locus (whether or not the blow ups are weighted) - the issue becomes relevant when our main interest
lies in vector fields, rather than foliations, see Section~\ref{transformingvectorfields}


\subsection{Resolution following \cite{helenajulio_RMS}}\label{subsec:nos}

In \cite{helenajulio_RMS}, we also establish the existence of a birational model for $\fol$ where all singularities of $\fol$ are elementary except
for finitely many ones that can be turned into elementary singular points by means of a single blow-up of weight 2. To be more precise,
our resolution result for foliations can be stated as follows.

\begin{theorem}\label{teo:A}
\noindent {\rm \cite{helenajulio_RMS}}\hspace{0.1cm}
Let $\fol$ denote a singular holomorphic foliation defined on a neighborhood of $(0,0,0) \in \C^3$. Then there
exists a finite sequence of blow-up maps along with transformed foliations
\begin{equation}
\fol = \fol_0 \stackrel{\pi_1} \longleftarrow \fol_1 \stackrel{\pi_2} \longleftarrow \cdots
\stackrel{\pi_l}\longleftarrow \fol_n \label{blowup-resolution_globaldesingularization}
\end{equation}
satisfying all of the following conditions:
\begin{itemize}
  \item[(1)] The center of the blow-up map $\pi_i$ is (smooth and) contained in the singular set of $\fol_{i-1}$, $i=1, \ldots , n$.

  \item[(2)] The singularities of $\fol_n$ are either elementary or persistently nilpotent.

  \item[(3)] The number of persistently nilpotent singularities of $\fol_n$ is finite and each of them can be turned into
  elementary singular points by performing a single weighted blow-up of weight~$2$.
\end{itemize}
\end{theorem}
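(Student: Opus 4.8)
The plan is to follow the two-part structure of Zariski's approach, exactly as outlined in the text: first obtain a \emph{local uniformization} statement along every valuation, then \emph{globalize} it. More concretely, I would take as a black box the result of Cano--Roche--Spivakovsky \cite{C-R-S}, which already yields a birational model in which all singular points are log-elementary (hence at worst quadratic), and I would take Piltant's axiomatic gluing theorem \cite{Piltant} as the device that converts a valuation-wise simplification into a global one. So the first step is to set up the local invariant: for a foliation $\fol$ on $(\C^3,0)$ that cannot be resolved by standard blow-ups centered at singular sets, and for a fixed valuation $\nu$, I would run the sequence of standard point/curve blow-ups dictated by the center of $\nu$ and track the pair (order of $\fol$ at the relevant point, multiplicity of $\fol$ along the $\nu$-selected formal separatrix). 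By Formula~(\ref{decreasingmultiplicity}), the multiplicity is non-increasing and drops strictly as long as the order is $\geq 2$; hence after finitely many blow-ups along $\nu$ the sequence stabilizes, and at that stage the stabilized singular point is, by the case analysis, either elementary or nilpotent. If it is nilpotent and the whole tail of the sequence keeps it nilpotent with constant multiplicity, then by definition this is a \emph{persistent nilpotent} singular point, and Theorem~\ref{Microlocalversion_TheoremA} puts it in the explicit normal form $(y+zf)\,\ddx + zg\,\ddy + z^n\,\ddz$ with $\partial g/\partial x(0)\neq 0$.

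The second step is the globalization. Having shown that along every valuation the simplification terminates in (elementary $\cup$ persistent nilpotent) points, I would feed this into Piltant's gluing theorem to produce a \emph{single} finite composition of blow-up maps $\pi_1\circ\cdots\circ\pi_n$, with each $\pi_i$ centered at a smooth subset of $\mathrm{Sing}(\fol_{i-1})$, after which every singular point of $\fol_n$ is elementary or persistent nilpotent. This gives items~(1) and~(2) of the statement. Here one must be careful that the gluing theorem's axioms are genuinely satisfied by the local invariant chosen above — in particular that the class of ``good final models'' is stable under the relevant localizations and that the local uniformization is uniform enough; but this is precisely the content that \cite{C-R-S} and \cite{Piltant} were designed to handle, so I would cite it rather than reprove it. The only genuinely new ingredient beyond \cite{C-R-S} is upgrading ``log-elementary'' to ``elementary or persistent nilpotent'', and that is a purely local improvement achieved by the multiplicity-drop argument together with the normal-form Theorem~\ref{Microlocalversion_TheoremA}.

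The third step handles item~(3). Finiteness of the set of persistent nilpotent points of $\fol_n$: each such point carries a formal separatrix tangent to a coordinate axis and lies on the exceptional divisor (contained in $\{z=0\}$ locally), and since $\fol_n$ is a fixed foliation on a compact piece of the blown-up space, an accumulation of persistent nilpotent points would force, via the normal form, a curve of such singularities, contradicting that (off the persistent nilpotent locus) the singularities are elementary and that the center of the last blow-up was taken to already be in reduced form; so the set is discrete, hence finite. Then, for each persistent nilpotent point, one performs the single weighted blow-up of weight~$2$ in the coordinates of Theorem~\ref{Microlocalversion_TheoremA}: the quasi-homogeneous component of $X$ picked out by the weights $(1,2,2)$ on $(x,y,z)$ — or the appropriate weighting making $y\sim z^2$ — produces a transform whose singular points all have a nonzero eigenvalue, which is the routine computation already carried out in \cite{helenajulio_RMS} and mirrored in \cite{danielMcquillan}.

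The step I expect to be the main obstacle is the first one: establishing that the multiplicity-drop dichotomy really does terminate in \emph{exactly} ``elementary or persistent nilpotent'' and not in some residual third class, i.e.\ proving Theorem~\ref{Microlocalversion_TheoremA} itself. One has to show that when the order stabilizes at a nilpotent point, a formal separatrix tangent to a coordinate axis is forced to exist (this is the delicate Cano-type analysis of invariant formal curves), and then that the surviving vector field can be normalized to the displayed form with $\partial g/\partial x(0)\neq 0$ — the nondegeneracy that guarantees the final weight-$2$ blow-up works. Everything downstream (globalization via Piltant, finiteness, the last weighted blow-up) is comparatively mechanical once this local structure theorem is in hand; the conceptual and computational weight of the proof sits in that normal-form step, which is why the text attributes it to the detailed work of \cite{helenajulio_RMS} building on \cite{C-R-S} and \cite{canoLNM}.
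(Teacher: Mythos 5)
Your proposal matches the paper's own route: the same Zariski-style decomposition into local uniformization (driven by the decreasing multiplicity of the foliation along a formal separatrix, terminating in elementary or persistent nilpotent points via Theorem~\ref{Microlocalversion_TheoremA}) followed by globalization through Piltant's gluing theorem, and you correctly locate the real difficulty in the local structure theorem. The one point to reorder: the existence of the invariant formal separatrix (Cano's assertion, Proposition~4 of \cite{helenajulio_RMS}) must be established \emph{before} the multiplicity-drop induction can be set up --- it is not a by-product of the order stabilizing at a nilpotent point, but the input that makes the descent well defined in the first place.
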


The proof of this theorem has essentially two main ideas. The first one concerns a (personal) comment by F. Cano claiming
that ``if a foliation cannot be resolved by standard blow ups, then there must exist a formal separatrix
giving rise to a sequence of infinitely near singular points that never becomes elementary''. This assertion harks back
to his earlier works on resolutions of $1$-dimensional foliations \cite{canoLNM} and some important results in this direction
can also be found in \cite{C-R-S}. To provide a complete proof of Cano's assertion was therefore a crucial
point in the proof of Theorem~\ref{teo:A} and the corresponding result is the content of
Proposition~4 in \cite{helenajulio_RMS}. Interestingly enough, the argument
provided in \cite{helenajulio_RMS} is rather different from the one envisaged by F. Cano.

With Proposition~4 of \cite{helenajulio_RMS} in place, the main idea to derive Theorem~\ref{teo:A} is to argue from the
notion of multiplicity of a foliation along a separatrix, as defined in Subsection~\ref{subsec:pns}. The sequence
formed by a separatrix and its transforms is decreasing so that it stabilizes after finitely steps. When the sequence becomes
stable, the order of the singular point of the foliation must be~$1$. Thus either the singularity has become elementary
or we can resort to Theorem~\ref{Microlocalversion_TheoremA} to characterize it as a persistent nilpotent singularity,
which is necessarily isolated among other possible persistent nilpotent singular points. Therefore this yields a
{\it local uniformization theorem}\, in the sense of Zariski. At this point, O. Piltant ``gluing theorem'' \cite{Piltant} allows one
to conclude Theorem~\ref{teo:A}.

Recalling that persistent nilpotent singular points are in correspondence with $\Z /2\Z$-orbifold type singular points,
the differences between Theorem~\ref{danielJDG} and Theorem~\ref{teo:A} are down to the way the corresponding birational
models are constructed. Unlike Panazzolo \cite{daniel2}, our proof of Theorem~\ref{teo:A} does not provide any effective algorithm
to resolve singularities. In some problems, however, it might simplify discussions/arguments by sticking to a single type of blow up,
the standard one, provided that the problem in question requires only a theorem asserting the existence of a resolution,
as opposed to an effective manner to obtain the resolution in question.


\subsection{A final comment on transforming vector fields}\label{transformingvectorfields}

In close this section, let us point out a virtue of standard blow ups, as used as in Theorem~\ref{teo:A}, that is also present
in Theorem~\ref{danielJDG} thanks to item~(i) in the corresponding statement. This concerns vector fields as opposed to
$1$-dimensional foliations.

In fact, it is not a foliation but rather some holomorphic
vector field that is the object of primary interestin many problems and applications of singularity theory.
Examples of this situation are provided in
Section~\ref{separatricescod1} and throughout Section~\ref{semicompletevectorfields}. Naturally a vector field $X$
gives rise to an $1$-dimensional foliation $\fol$ of which a birational model whose all singular points are elementary may
be useful. Nonetheless, if the vector field $X$ is the object of primary interest, then the transforms of $X$ have
to be considered as well. At this point, the difference between vector fields and $1$-dimensional foliations
is summarized by the following self-evident statement: the transform of a $1$-dimensional holomorphic foliation under a
rational map is another $1$-dimensional holomorphic foliation, however, the transform of a holomorphic vector field
under a rational map {\it is, in general, a meromorphic vector field}.

When applying resolution theorems for $1$-dimensional foliations to the study of vector fields
it is therefore relevant to seek to retain the ``good'' analytic properties of them as much as possible (again concrete examples
are provided in Sections~\ref{Allsortsofseparatrices} and~\ref{semicompletevectorfields}). For example,
if we start with a holomorphic vector field $X$, we might hope
that its transform at the end of a resolution procedure still is a holomorphic vector field. In this context, we have:

\vspace{0.1cm}

\noindent {\it Claim}. The transform of a holomorphic vector field under a resolution procedure as in
Theorem~\ref{danielJDG} or in Theorem~\ref{teo:A} still is a holomorphic vector field.

\vspace{0.1cm}

The claim is clear in the case of Theorem~\ref{teo:A} as it is a basic fact that the blow-up of a holomorphic
vector field centered at its singular locus is again holomorphic. In fact, for the blown-up vector field to be holomorphic again
it suffices the center of the blow-up to be invariant by the initial vector field.

In the case of Theorem~\ref{danielJDG} this is not immediate as a weighted blow-up may turn a holomorphic vector field
into a meromorphic one even if its center is contained in the singular set of the initial vector field. This is where
{\it the condition of having centers that are called strictly invariant with
respect to the quasi-homogeneous filtration in question, as used in \cite{daniel2} and reproduced in the first part of \cite{danielMcquillan},
comes into play (see item~(i) in Theorem~\ref{danielJDG})}. This condition, if slightly technical, ensures
that the transform of holomorphic vector fields remains holomorphic.


\section{Invariant analytic sets}\label{Allsortsofseparatrices}

The problem of existence of {\it invariant manifolds}\, has always been a central theme in the theory of dynamical systems.
Among the many reasons for this, there is the simple fact that these invariant manifolds
usually provide reductions on the dimension of
the corresponding phase-space. For example, in the general theory of hyperbolic systems, the so-called stable manifolds
are examples of invariant manifolds and, in fact, their existence form a cornerstone of the hyperbolic theory. The existence
of stable manifolds for hyperbolic singular points is a consequence of the general theory and ensured by the well-known Stable
Manifold Theorem. However, whether or not ``stable'', invariant manifolds may fail to exist if
the singular point is no longer hyperbolic. The simplest example is provided by the vector field
\[
X = y \frac{\partial}{\partial x} - x \frac{\partial}{\partial y} \,
\]
all of whose integral curves are circles around the origin of $\R^2$. Clearly there is no
invariant manifold in this case.

The general problem of existence of invariant manifolds may also be considered in the context
of holomorphic dynamics. In this
case, we look for invariant complex-analytic objects, which is a much stronger regularity condition.
We allow, however, these objects to be singular in the sense of analytic sets. In other words,
we look for {\it invariant varieties}, as opposed to actual {\it manifolds}. In the sequel, the word
``manifold'' will be saved for smooth objects.

As mentioned in section~\ref{basics}, Briot and Bouquet were the first to consider the problem of existence of separatrices
for holomorphic vector fields defined on a neighborhood of the origin of $\C^2$ in \cite{briotbouquet}. However,
they were not able to establish the existence of
separatrices for all holomorphic vector fields on $(\C^2,0)$. This question was settled
only much later  by Camacho and Sad in their remarkable paper \cite{camachosad} where the following is proved:

\begin{theorem}\cite{camachosad}\label{teo_CS}
Let $\fol$ be a singular holomorphic foliation defined on a neighborhood of the origin of $\C^2$. Then there exists an analytic
invariant curve passing through $(0,0)$ and invariant by $\fol$.
\end{theorem}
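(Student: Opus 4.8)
The plan is to use the reduction of singularities provided by Seidenberg's theorem (statement (A) in Section~\ref{discussingdimension2}) together with an index-theoretic obstruction. First I would reduce to the non-dicritical case: if $\fol$ is dicritical at the origin, then by Lemma~\ref{lemma2_dicritical} the union of separatrices through $0$ already has non-empty interior, so in particular separatrices exist in abundance. Hence we may assume $\fol$ is non-dicritical. Next, apply Seidenberg's theorem to obtain a finite composition of blow-ups $\pi = \pi_1 \circ \cdots \circ \pi_n$ and a transformed foliation $\fol_n$ all of whose singular points are elementary, with total exceptional divisor $D = D_1 \cup \cdots \cup D_n$ a tree of rational curves, each $D_i$ invariant by $\fol_n$ (non-dicriticality). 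The key reduction is: it suffices to find, at \emph{some} singular point $q$ of $\fol_n$, a local separatrix $\widetilde{S}$ that is \emph{not} contained in $D$; then, since $\pi$ is proper (a composition of proper blow-up maps), Remmert's theorem guarantees that $\pi(\widetilde{S})$ is an analytic curve through $0$, and it is invariant by $\fol$ because $\widetilde{S}$ is invariant by $\fol_n = \pi^{\ast}\fol$. So the whole problem is transported to the exceptional divisor: \emph{does $\fol_n$ admit a separatrix transverse to $D$ at some point of $D$?}

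The heart of the argument is the Camacho--Sad index formula. For an invariant smooth curve $C$ passing through an elementary singular point $p$ of a foliation, one defines a local index $\mathrm{CS}(\fol_n, C, p) \in \C$ (roughly, the ratio of eigenvalues in suitable coordinates, or a residue of a connection form); the global statement is that for each compact invariant rational curve $D_i$,
\[
\sum_{p \in \mathrm{Sing}(\fol_n) \cap D_i} \mathrm{CS}(\fol_n, D_i, p) = D_i \cdot D_i,
\]
the self-intersection number. Since the $D_i$ arise from blow-ups, each $D_i \cdot D_i < 0$. Now suppose, for contradiction, that $\fol_n$ has \emph{no} separatrix transverse to $D$; then at every singular point $p \in D$ the \emph{only} local separatrices are branches of $D$ itself. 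Using the local classification of elementary singularities, one translates ``no transverse separatrix at $p$'' into a restriction on $\mathrm{CS}(\fol_n, D_i, p)$ — essentially forcing these indices to be non-negative rational numbers (a saddle-node contributes a non-negative index along its strong/weak separatrix lying in $D$; a linearizable elementary singularity with both separatrices in $D$ forces the relevant index to be a positive rational). Summing along a suitably chosen $D_i$ and propagating the numerical constraints through the tree via the corner relations (the two indices at a corner $D_i \cap D_j$ are reciprocals of one another), one derives $\sum_p \mathrm{CS}(\fol_n, D_i, p) \geq 0$ for some leaf $D_i$ of the tree, contradicting $D_i \cdot D_i < 0$. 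This contradiction yields the desired transverse separatrix, and pushing it down by $\pi$ completes the proof.

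The main obstacle is the combinatorial/arithmetic bookkeeping on the dual tree: one must choose the component $D_i$ cleverly (typically an extremity of the tree, or argue by induction collapsing leaves) and control precisely how the local indices behave — in particular handling saddle-node singularities, whose central manifold may be only formal, so that the relevant ``separatrix in $D$'' must be the strong one and the index sign analysis is delicate. A secondary technical point is verifying the Camacho--Sad index theorem itself in the needed generality (invariant curve possibly singular at worst nodal on $D$, elementary but possibly non-linearizable singular points of $\fol_n$); this is where one would cite or reprove the residue computation. Everything else — the reduction to non-dicritical foliations, the application of Seidenberg, and the descent via Remmert — is routine given the results recalled in Section~\ref{basics}.
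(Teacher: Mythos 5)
First, a point of order: the paper does not prove Theorem~\ref{teo_CS}. It is quoted from \cite{camachosad} and followed only by commentary, so there is no in-paper argument to compare yours against. Your outline does reproduce the architecture of the classical proof (Camacho--Sad, and its later streamlinings by Cano and Toma): reduce to the non-dicritical case via Lemma~\ref{lemma2_dicritical}, apply Seidenberg, transport the problem to the exceptional divisor by properness of $\pi$ and Remmert, and play the index theorem $\sum_p {\rm CS}(\fol_n, D_i, p) = D_i \cdot D_i < 0$ against the constraints imposed by the absence of a transverse separatrix. That reduction scheme is correct and is indeed how the theorem is proved.

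The genuine gap is in the combinatorial core, which you assert rather than prove, and whose sketch as written is inaccurate on two points. (i) The constraint ``no transverse separatrix at $p$ implies ${\rm CS}(\fol_n,D_i,p)\in\Q_{\geq 0}$'' concerns \emph{non-corner} points only: at a non-corner point of $D_i$ any second convergent separatrix is automatically transverse to $D$, so the bad points are saddle-nodes with strong manifold along $D_i$ (index $0$) and, if one only uses the weak ``elementary'' form of reduction recalled in Section~\ref{discussingdimension2}, non-linearizable resonant nodes (index in $\Q_{>0}$). At a corner, by contrast, both separatrices lying in $D$ is the expected situation and imposes \emph{no} positivity or rationality on the index --- the eigenvalue ratio there can be any complex number, and the only relation is the reciprocity between the two indices of a non-degenerate corner, which moreover fails at saddle-node corners (one index is $0$, the other is not determined by it). (ii) Because the corner contributions are a priori arbitrary complex numbers, one cannot conclude ``$\sum_p {\rm CS} \geq 0$ for some $D_i$'' by summing and ordering; the actual argument is an induction on the dual tree in which a carefully chosen real (or rational) quantity attached to each component is shown to propagate inconsistently when every non-corner index lies in $\Q_{\geq 0}$. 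That induction is the whole difficulty of the theorem --- it is precisely where Briot--Bouquet's original argument had its gap --- and it is not supplied by the reciprocity remark. A secondary point: you should either invoke the stronger form of Seidenberg's theorem (final eigenvalue ratios not in $\Q_{>0}$, or saddle-nodes) or explicitly fold the resonant-node case into the index bookkeeping, since the version stated in the paper guarantees only elementary singular points.
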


Theorem~\ref{teo_CS} is well worth a few additional comments, namely:

\begin{itemize}
\item It is somehow surprising that separatrices for holomorphic vector fields on $(\C^2,0)$ always exist despite the much stronger
regularity condition for the invariant curve. For example, for the holomorphic vector field $y \frac{\partial}{\partial x}
- x \frac{\partial}{\partial y}$ defined on $(\C^2,0)$, the separatrices are given by the two complex lines $y = \pm ix$
and hence are totally contained in the non-real part of $\C^2$ (bar the singular point itself).

\item However, as mentioned, we do not require the separatrices to be smooth invariant curves
otherwise no general existence statement would hold. Indeed, as a simple example,
consider the holomorphic vector field $2y \partial /\partial x + x^3 \partial /\partial y$ on $(\C^2,0)$. Since this vector field admits
$f(x,y) = x^3 - y^2$ as first integral, it immediately follows that the only separatrix of $X$ is the cusp of equation $\{x^3 - y^2
= 0\}$, which is clearly not smooth at the origin. Fortunately, allowing separatrices to be singular is not a problem,
since Hironaka's theorem can always be used to desingularize them.

\item Also it is important to emphasize that Theorem~\ref{teo_CS} applies only to foliations defined on
smooth ambients. In fact, if germs of foliations defined on singular surfaces are considered, then
separatrices may fail to exist as shown by Camacho
in~\cite{C}.
\end{itemize}

The existence of separatrices is, however, no longer a general phenomenon in dimension~$3$, regardless of the dimension of the
foliation. As already said, a first example of codimension~$1$ foliation on $(\C^3,0)$ without separatrices
was provided by Jouanolou in \cite{jouanolou}. Jouanolou's example essentially hinging from the core dynamics
of (dicritical) foliations on $(\C^3,0)$, the same idea enables us to construct plenty of additional examples of
codimension~$1$ foliations without separatrices (cf. Section~\ref{subsec:Jouanolou} or the summary below).

Concerning $1$-dimensional foliations, examples of foliations without separatrices in dimension~$3$ were found by
Gomez-Mont and Luengo, \cite{GM-L}. Their work will be discussed in Section~\ref{separatrices_1dimension}.
For the time being, we will focus on the problem of invariant manifolds for codimension~$1$ foliations.


\subsection{Separatrices for codimension~$1$ foliations induced by pairs of commuting vector fields}\label{separatricescod1}

Let us begin by recalling/summarizing the discussion in
Section~\ref{subsec:Jouanolou} where it was shown how Jouanolou's method can be used to produce
many examples of codimension~$1$ foliations without
separatrix on $(\C^3,0)$. This is as follows.
\begin{itemize}
\item[(i)] Every homogeneous polynomial vector field $X$ on $\C^3$ that is not a multiple of the Radial vector field
induces a foliation on $\CP^2$ corresponding to the so-called core foliation associated with $X$.
Conversely, given a foliation
on $\CP^2$, there exists a homogeneous polynomial vector field on $\C^3$
whose core foliation is the given one.

\item[(ii)] Let $X$ be a homogeneous vector fields distinct from a multiple of the Radial vector field $R$. There follows
from the Euler relation that $X$ and $R$ generates a Lie algebra isomorphic to the Lie algebra of the affine group. In particular,
the distribution generated by $X$ and $R$ can be integrated to yield a dicritical codimension~$1$ foliation $\cald$. Furthermore,
the core foliation associated
with $\cald$ coincides with the core foliation associated with $X$.

\item[(iii)] Finally, for every fixed degree, Theorem~\ref{FrankandJulio} ensures the existence of a (non-empty) open set of foliations
on $\CP^2$ such that all leaves of each foliation $\fol$ in this set are dense. In particular,
no foliation in this set admits
algebraic invariant curves. The codimension~$1$ foliations generated by $R$ and by the homogeneous vector field having
one such foliation as core foliation has no seraparatrix.
\end{itemize}

In view of what precedes, it is natural to wonder
if all example of codimension~$1$ foliations without
separatrices are among the dicritical ones. In ambient spaces of dimension~$3$ this, in fact, holds as proved by Cano and Cerveau
in \cite{CanoCerveau}.
Their result can be stated as follows.

\begin{theorem}\label{CanoCerveauSep}\cite{CanoCerveau}
Let $\cald$ be a germ of a holomorphic singular codimension~$1$ foliation on $(\C^3,0)$.
If $\cald$ is not dicritical, then it admits a separatrix.
\end{theorem}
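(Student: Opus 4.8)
The plan is to reduce, via Cano's reduction of singularities for codimension~$1$ foliations on $(\C^3,0)$, to a foliation with only elementary singularities along a normal crossings divisor, and then to \emph{descend} a separatrix; the whole difficulty lies in the descent, because the components of the exceptional divisor are themselves invariant hypersurfaces but are contracted to the origin. Concretely, I would first apply \cite{cano}: there is a proper morphism $\pi\colon (M,E)\to(\C^3,0)$, a finite composition of blow-ups (with smooth centers), such that $E=\pi^{-1}(0)$ is a normal crossings divisor and the transform $\tf$ of $\cald$ has only simple — in particular elementary — singular points, all of them on $E$. Since $\cald$ is non-dicritical, by the very definition of (non-)dicriticality no sequence of blow-ups issued from the origin can produce a non-invariant divisor, so every irreducible component $E_i\subset E$ is $\tf$-invariant. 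The decisive reformulation is then: it suffices to produce \emph{one} irreducible germ of $\tf$-invariant analytic hypersurface $\widetilde W$, defined on a neighborhood of $E$, with $\widetilde W\not\subseteq E$. Indeed $\pi$ restricted to a representative of $\widetilde W$ is proper, so $\pi(\widetilde W)$ is analytic by Remmert's theorem; as $\pi$ is biholomorphic off $E$ and $\widetilde W$ meets the complement of $E$, the image $\pi(\widetilde W)$ is a germ at $0$ of a $\cald$-invariant analytic set of dimension~$2$, and any of its irreducible components through $0$ is a separatrix in the sense of Definition~\ref{definition_separatrix}.

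To build $\widetilde W$ one must make it transverse to $E$: its trace on each $E_i$ should be a $\tf$-invariant curve other than the double curves $E_i\cap E_j$. This leads one to study the $1$-dimensional trace foliation $\tf|_{E_i}$ on each (rational) surface $E_i$ and the dual graph $G$ of $E$ (vertices $=$ components, edges $=$ double curves, with the triple points $E_i\cap E_j\cap E_k$ as extra incidence data). By the Camacho--Sad theorem (Theorem~\ref{teo_CS}), each $\tf|_{E_i}$ possesses separatrix curves, and these, together with the double curves, form the ``skeleton'' along which a transverse invariant hypersurface is to be grown: at a simple singular point lying on the smooth part of $E$ one lists the local invariant (coordinate) hypersurfaces — the component of $E$ through the point being one of them — and one tries to choose, coherently along each $E_i$, across the double curves, and around the triple points, a local invariant hypersurface \emph{not contained} in $E$. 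The matching of these local choices along a double curve $E_i\cap E_j$ is controlled by its holonomy and by Camacho--Sad--type index identities; a choice coherent around all of $G$ assembles into the desired $\widetilde W$.

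Two points make this genuinely hard, and they are exactly what occupies \cite{CanoCerveau}. First, the gluing above produces a priori only a \emph{formal} invariant hypersurface, so convergence must be established; this rests on the precise normal forms of simple codimension~$1$ singularities in dimension~$3$, for which the relevant formal separatrices are analytic away from resonant configurations, the resonant cases being handled through the holonomy of the double curves. Second — and this is precisely where non-dicriticality is indispensable — the propagation can be \emph{blocked} at a simple singular point all of whose local invariant hypersurfaces happen to lie in $E$, leaving no transverse piece available there. One must show that such blockings cannot obstruct \emph{every} route through $G$: were that so, the index identities along the double curves, read off the dual graph, would force one of the components $E_i$ to behave dicritically, contradicting the non-dicriticality of $\cald$ (which, being inherited by the resolution, forbids any dicritical component of $E$). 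Carrying out this dichotomy — extend a transverse separatrix whenever one exists at some point, and exclude the fully blocked configuration by a global index/combinatorial argument on $G$ — is the technical heart of the proof, and it is there that I would expect essentially all the effort to go.
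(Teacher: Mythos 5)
Your proposal follows exactly the route the paper outlines for this result: reduce the singularities (the paper notes that the original argument relies on the resolution theorem for non-dicritical codimension~$1$ foliations proved in \cite{CanoCerveau} itself, though the later general theorem of \cite{cano} that you invoke also suffices here), observe that non-dicriticality forces every irreducible component of the exceptional divisor to be invariant so that no core dynamics obstructs the construction, and then descend a transverse invariant hypersurface in the geometric spirit of Camacho--Sad. The gluing along the dual graph, the convergence of the formal invariant hypersurface, and the exclusion of the fully blocked configuration that you flag as the technical heart are indeed where the work of \cite{CanoCerveau} lies, and your sketch is consistent with the paper's (much briefer) account.
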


The proof of the previous result relies heavily on a resolution theorem for non-dicritical codimension~$1$ foliations
obtained by the authors in the same paper.
Note, however, that the non-dicritical assumption, implies that the transforms of the initial codimension~$1$ foliation
leave every irreducible component of the exceptional divisor
invariant. In other words, away from singular points,
every irreducible component of the exceptional divisor is a leaf of the foliations in question. This rules out
the existence of meaningful core dynamics and making the problem very much comparable to the $2$-dimensional
situation handled by Camacho and Sad in \cite{camachosad} which has a more geometric nature.

In a different direction, experts including F. Cano, D. Cerveau, and L. Stolovitch have since long wondered
what would be the ``correct generalization'' of Camacho-Sad theorem for $(\C^3, 0)$, already at level of
codimension~$1$ foliations. In particular, the idea that a codimension~$1$ foliation spanned by a pair of
commuting vector fields (not everywhere parallel) might necessarily admit separatrices was advanced.
The question is settled by the theorem below which confirms their intuition.

\begin{theorem}\cite{RR_separatrix}\label{teo_RR_separatrix}
Consider holomorphic vector fields $X, \, Y$ defined on a neighborhood of the origin of $\C^3$. Suppose that they
commute and are linearly independent at generic points (so that they span a codimension~$1$ foliation denoted by
$\cald$). Then $\cald$ possesses a separatrix.
\end{theorem}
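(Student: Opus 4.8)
The plan is to combine three results already available in this survey --- the two-dimensional Camacho--Sad theorem (Theorem~\ref{teo_CS}), the Cano--Cerveau theorem on non-dicritical codimension~$1$ foliations (Theorem~\ref{CanoCerveauSep}), and the resolution theorem for $1$-dimensional foliations on $(\C^3,0)$ (Theorem~\ref{teo:A}) --- with the observation of Section~\ref{transformingvectorfields} that a resolution obtained by blowing up invariant centers preserves the holomorphic character of a vector field. I would begin with a sequence of reductions. If $X(0)\neq 0$ (or, symmetrically, $Y(0)\neq 0$) one straightens $X$ to $\partial/\partial z$; the relation $[X,Y]=0$ then forces the coefficients of $Y$ to be independent of~$z$, so that, after subtracting a multiple of $X$, the foliation $\cald$ is the pull-back under $(x,y,z)\mapsto(x,y)$ of a $1$-dimensional --- hence codimension~$1$ --- foliation on $(\C^2,0)$, and the product of a Camacho--Sad separatrix of the latter with the $z$-axis is a separatrix of $\cald$. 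If instead $\cald$ is not dicritical with respect to the blow-up at the origin (equivalently, by Lemma~\ref{invpunctual}, no vector field tangent to $\cald$ has leading homogeneous component proportional to the radial vector field), then Theorem~\ref{CanoCerveauSep} applies verbatim. One is thus reduced to the case in which $X(0)=Y(0)=0$ and $\cald$ is dicritical, which is exactly the case in which the core dynamics of $\cald$ (Definition~\ref{core_foliation-codimension1_C3}) enters the stage.

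In that case I would pass to the one-point blow-up of $(\C^3,0)$ at the origin and examine the core foliation $\fol^{c}$ induced by $\cald$ on the exceptional divisor $E\simeq\CP^2$. Writing $X=X_k+\cdots$ and $Y=Y_m+\cdots$ for the first nonzero homogeneous components of $X$ and $Y$, the identity $[X,Y]=0$ has $[X_k,Y_m]$ as its lowest-degree homogeneous part, so $[X_k,Y_m]=0$; thus the leading codimension~$1$ foliation $\cald_k$ is again spanned by a commuting pair, and the homogeneous vector field representing $\fol^{c}$ (which, as in Lemma~\ref{Invariantcone_remarkjouanolou}, is of the form $R\wedge(F^k,G^k,H^k)$ built from the leading form $\Omega_k=F^k\,dx+G^k\,dy+H^k\,dz$) carries an infinitesimal symmetry coming from $X_k$ and $Y_m$. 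A Darboux--Lie type argument --- a foliation on $\CP^2$ admitting a polynomial infinitesimal symmetry that is not everywhere tangent to it possesses either a meromorphic first integral or an invariant algebraic curve --- then shows that $\fol^{c}$ has an invariant algebraic curve $\mathcal{C}\subset\CP^2$, in sharp contrast with the Jouanolou foliations recalled in Section~\ref{subsec:Jouanolou}. Reversing the computation in the proof of Lemma~\ref{Invariantcone_remarkjouanolou} --- where the kernel of $\Omega_k$ at a generic point is spanned by $R$ and by the core vector field --- the cone $\widehat{\mathcal C}\subset\C^3$ over $\mathcal{C}$ is invariant by $\cald_k$. This cone is the candidate tangent cone of the separatrix we are after.

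It remains to upgrade this leading-order datum to an honest convergent separatrix of $\cald$, and this is where the technical core of the argument lies. Here I would work with the $1$-dimensional foliation $\fol_X$ defined by $X$ and apply Theorem~\ref{teo:A} to it: since the centers of the blow-ups are contained in the successive singular sets of $\fol_X$ and are therefore invariant by both $X$ and $Y$ (the flow of $Y$ being, by $[X,Y]=0$, a symmetry of $\fol_X$), the transforms $\widetilde X$ and $\widetilde Y$ remain holomorphic and commuting by the Claim of Section~\ref{transformingvectorfields} --- the finitely many persistent nilpotent points being handled, as in Theorem~\ref{Microlocalversion_TheoremA}, by one further weighted blow-up of weight~$2$ along an invariant center. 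In the resulting model all singularities of $\widetilde{\fol}_X$ are elementary, and at such a singular point $p$ lying over $0$ the commutation of $\widetilde Y$ with $\widetilde X$ forces $D_p\widetilde Y$ to respect the eigen-decomposition of $D_p\widetilde X$, which is what allows one to promote an $\widetilde X$-invariant surface through $p$ to one invariant by $\widetilde Y$ as well; projecting such a surface down by Remmert's theorem produces a separatrix of $\cald$ through the origin.

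The genuine obstacle --- and the reason the statement is far from formal --- is precisely to show that among the elementary models produced by the resolution there is one carrying an invariant surface that escapes the exceptional divisor, i.e. that the tangent cone $\widehat{\mathcal C}$ sitting on $E$ really integrates. This requires controlling the strict transforms of $\widehat{\mathcal C}$ along the resolution together with a convergence argument in the spirit of the Mattei--Moussu pseudogroup technique discussed in Section~\ref{discussingdimension2}; in addition, the degenerate subcase in which the symmetry of $\fol^{c}$ happens to be tangent to $\fol^{c}$ (so that $\fol^{c}$ is itself defined by a homogeneous vector field and $\mathcal{C}$ may have to be sought among its orbits) must be treated separately, although there the structure of $\cald$ is so rigid that a separatrix can be exhibited almost by hand.
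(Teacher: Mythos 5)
Your overall architecture (reduce to the case of vanishing jets, identify the dicritical blow-up as the obstruction, tame the core dynamics, then integrate) is the right one, and your reductions via Camacho--Sad and Cano--Cerveau match the paper's setup. But the central mechanism you propose for taming the core dynamics does not fire in the case that actually matters. In the dicritical situation the leading homogeneous parts $X_k$ and $Y_m$ \emph{both} lie in the kernel of the leading $1$-form $\Omega_k$, together with the radial vector field $R$; since that kernel is $2$-dimensional at generic points and contains $R$, the projections of $X_k$ and $Y_m$ to $\pi^{-1}(0)\simeq\CP^2$ define \emph{the same} foliation (this is exactly Lemma~\ref{differentorders} and the sentence preceding it: if the induced foliations differed, the exceptional divisor would be invariant and $\cald$ would not be dicritical). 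So there is no transverse infinitesimal symmetry of the core foliation to feed into a Darboux--Lie argument: the subcase you dismiss as ``degenerate'' and ``rigid enough to be handled by hand'' is in fact the only case, and it is precisely the Jouanolou-type configuration where the core foliation could a priori be arbitrary. The paper's way around this is Proposition~\ref{existenceoffirstintegrals}: writing the dicriticality as a homogeneous relation $f^H X^H + g^H Y^H = h^H R$ and combining it with $[X^H,Y^H]=0$ forces $X^H\!\cdot(h^H/g^H)=0$, i.e. $X^H$ admits a non-constant rational first integral, whose level cones supply the invariant algebraic curves. This computation, which exploits that $X^H$ and $Y^H$ have \emph{different degrees} while spanning the same plane field with $R$, is the genuinely new idea of the proof and is absent from your proposal.

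Two further points. First, your reduction stops at $X(0)=Y(0)=0$, but Lemma~\ref{nondicriticalvectorfields} and Proposition~\ref{existenceoffirstintegrals} require the full \emph{first jets} to vanish; the ``linear'' cases (non-zero linear part vanishing at the origin, and the analogous notion of order zero or one with respect to a curve-centered blow-up, cf.\ Example~\ref{fortimebeingunique}) have to be treated separately, and the curve-centered analogues of all the lemmas are needed since any resolution in dimension~$3$ uses both types of centers. Second, the paper resolves the \emph{codimension-one} foliation $\cald$ via Cano's theorem \cite{cano} rather than resolving $\fol_X$ via Theorem~\ref{teo:A}; your alternative route through the $1$-dimensional resolution leaves unaddressed both the behavior of elementary singularities with repeated or zero eigenvalues and, as you yourself concede, the step of producing an invariant surface escaping the exceptional divisor --- which is the conclusion of the theorem, not an input. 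As written, the proposal identifies the right obstruction but does not supply the argument that overcomes it.
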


The remainder of this paragraph is devoted to single out a few issues involved in the proof of
Theorem~\ref{teo_RR_separatrix}. This illustrates several points made in the preceding sections,
including the usefulness of ``taming'' core dynamics (and how ``symmetries'' may be exploited to this effect)
and the role of resolutions theorems. Concerning the latter, the argument will also highlight the
the importance of having actual vector fields - rather than mere
foliations - being ``nicely'' transformed during the resolution procedure.

The first ingredient in the proof of Theorem~\ref{teo_RR_separatrix} is therefore
a general resolution of singularities theorem
for codimension~$1$ foliations in dimension~$3$. Compared to Theorem~\ref{CanoCerveauSep}, the main result in
\cite{CanoCerveau} is arguably a theorem of reduction of singularities for the foliations in question {\it under the
additional condition}\, that the foliation should be non-dicritical. Fortunately, Cano has obtained in \cite{cano}
a general resolution theorem for codimension~$1$ foliations on $(\C^3,0)$ which applies equally
well to dicritical foliations.

Armed with Cano's theorem \cite{cano}, we see that the basic obstacle for the existence of separatrices lies in
the core dynamics by means of the phenomenon already pointed out in Jouanolou examples, cf. Section~\ref{core_dynamics}.
The central point in the
proof of Theorem~\ref{teo_RR_separatrix} is therefore to ``tame'' the core dynamics arising from dicritical
divisors the resolution procedure applied to $\cald$ will have ``plenty of algebraic curves''. In the sequel,
we shall indicate some simple ideas used to show that the mentioned core dynamics cannot be ``too wild''.

Let us consider the simplest case where we want to blow-up the origin (a degenerate singular point of $\cald$).
The first lemma shows that at least one between the vector fields $X$ and $Y$ have to induce a foliation on
the resulting exceptional divisor, unless we have a truly very special situation that is essentially
``linear'' (and hence easy to handle).

Recalling that $\cald$ is spanned by the commuting vector fields $X$ and $Y$, let $\fol_X$ (resp. $\fol_Y$) denote
the $1$-dimensional singular foliation associated with $X$ (resp. $Y$).

\begin{lemma}\label{nondicriticalvectorfields}
	Assume that the first jet of both $X$ and $Y$ at the origin are equal to zero. Then none of the foliations
	$\fol_X$ or $\fol_Y$ is dicritical for the blow-up $\pi$ of $\C^3$ at the origin.
\end{lemma}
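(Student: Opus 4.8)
The plan is to argue by contradiction and to show that neither $\fol_X$ nor $\fol_Y$ can be dicritical for $\pi$; by symmetry it suffices to treat $\fol_X$. So suppose $\fol_X$ is dicritical and fix a representative vector field $X$ of $\fol_X$, with Taylor expansion $X=\sum_{i\geq k}X_i$ at the origin, where $X_k\not\equiv 0$ and $k=\mathrm{ord}_0 X$. Since by hypothesis the first jet of $X$ vanishes, $k\geq 2$. By Lemma~\ref{lemma1_dicritical}, dicriticality of $\fol_X$ means exactly that $X_k=fR$ for a homogeneous polynomial $f$ of degree $k-1\geq 1$; in particular $f$ is non-constant and $f(0)=0$.

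First I would extract the information carried by $[X,Y]=0$ at the lowest order. Writing $Y=\sum_{i\geq l}Y_i$ with $Y_l\not\equiv 0$ and $l=\mathrm{ord}_0 Y\geq 2$, the homogeneous component of least degree of $[X,Y]$ is $[X_k,Y_l]$, so $[X_k,Y_l]=0$. Using $[R,Y_l]=(l-1)Y_l$ this reads $(l-1)fY_l-(Y_lf)R=0$, and since $f\not\equiv 0$ and $l\geq 2$ it forces $Y_l$ to be everywhere proportional to $R$; hence $Y_l=gR$ for a homogeneous polynomial $g$ of degree $l-1\geq 1$. Substituting back and using $Rf=(k-1)f$, $Rg=(l-1)g$, the same identity becomes $(l-k)fg\,R=0$, whence $l=k$. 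In particular $\fol_Y$ is dicritical as well, so the statement is equivalent to ruling out the possibility that $\fol_X$ and $\fol_Y$ are simultaneously dicritical.

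The decisive step is then to show that the data now at hand — $[X,Y]=0$, $X_k=fR$, $Y_k=gR$, and $\mathrm{ord}_0 X=\mathrm{ord}_0 Y=k\geq 2$ — force $X$ and $Y$ to be everywhere linearly dependent, i.e. $X\wedge Y\equiv 0$; this contradicts the standing hypothesis that $X$ and $Y$ be linearly independent at generic points. When $g$ is a scalar multiple of $f$ this is quick: replacing $Y$ by $Y-cX$, which still commutes with $X$ and, unless it vanishes, still spans $\cald$ together with $X$, strictly raises the order of $Y$, and rerunning the lowest-order argument above on the new pair yields, by the degree comparison $(l'-k)fg'\,R=0$ with $l'>k$, that its leading component vanishes — so $Y=cX$ and $X\wedge Y\equiv 0$. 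The heart of the matter is the general case: one is led to subtract from $Y$ the a priori only meromorphic multiple $(g/f)X$ of $X$, and the point is to control, order by order, the vector field so produced. Exploiting $[X,Y]=0$ degree by degree — for instance the degree‑$2k$ component already shows that $gX_{k+1}-fY_{k+1}$ is again a multiple of $R$, and this pattern propagates — one shows that $X$ and $Y$ stay proportional modulo $R$ to all orders, hence that they define the same $1$‑dimensional foliation and $X\wedge Y\equiv 0$; equivalently, $X$ and $Y$ are holomorphic multiples of a common vector field $V$ whose leading homogeneous component is proportional to $R$, which is precisely the ``essentially linear'' degenerate situation alluded to before the statement and is incompatible with $X\wedge Y\not\equiv 0$.

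I expect this last step to be the main obstacle: passing from the leading‑order degeneracy $X_k,Y_k\propto R$ to the global identity $X\wedge Y\equiv 0$ is not formal and requires genuine control of the full relation $[X,Y]=0$, not merely of its first few homogeneous components. A clean way to organize it would be to put $X$ into a suitable normal form (or to isolate the obstruction coming from the higher‑order part of $X$) and then describe its centralizer, checking that every vector field of order at least $2$ commuting with $X$ is tangent to $\fol_X$. Once $X\wedge Y\equiv 0$ is secured the contradiction with generic linear independence is immediate, and the argument for $\fol_Y$ is identical (indeed it already follows from the equivalence ``$\fol_X$ dicritical $\Leftrightarrow$ $\fol_Y$ dicritical'' established above).
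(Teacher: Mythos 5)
Your opening reduction is correct and worth keeping: from $[X_k,Y_l]=0$ with $X_k=fR$ you correctly deduce $Y_l=gR$ and then $l=k$, so both foliations would have to be dicritical with radial leading parts of the same order. But the proof stops there: the decisive step --- passing from $X_k,\,Y_k\propto R$ to the global identity $X\wedge Y\equiv 0$ --- is only announced, not carried out, and you acknowledge as much. The order-by-order scheme you sketch (e.g.\ that the degree-$2k$ component of $[X,Y]=0$ shows $gX_{k+1}-fY_{k+1}$ is a polynomial multiple of $R$) is a correct first iteration, but it does not obviously propagate: to ``subtract $(g/f)X$ from $Y$'' you must divide by $f$, which vanishes on a hypersurface, so the successive corrections need not stay holomorphic, and it is not clear that degree-by-degree information reassembles into $X\wedge Y\equiv 0$. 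The fallback you propose (a normal form for $X$ together with a description of its centralizer) is precisely the hard content that is missing. As it stands this is a genuine gap, not a routine verification.

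For comparison, the paper closes the argument in a few lines by working upstairs rather than with Taylor series at the origin. If $\fol_X$ is dicritical, then at a generic point of the exceptional divisor $\pi^{-1}(0)\simeq\CP^2$ the blown-up foliation $\tilf_X$ is regular and transverse to $\pi^{-1}(0)$, so one can choose coordinates $(u,v,w)$ with $\{u=0\}\subset\pi^{-1}(0)$ in which $\tilf_X$ is rectified to $\partial/\partial u$; hence $\widetilde X=f\,\partial/\partial u$. Because the first jets of $X$ and $Y$ vanish, both $\widetilde X$ and $\widetilde Y$ vanish identically on $\pi^{-1}(0)$. Writing $\widetilde Y=f_1\,\partial/\partial u+f_2\,\partial/\partial v+f_3\,\partial/\partial w$, the relation $[\widetilde X,\widetilde Y]=0$ forces $f_2$ and $f_3$ to be independent of $u$; since they vanish on $\{u=0\}$ they vanish identically, so $\widetilde Y$ is parallel to $\widetilde X$ on an open set, contradicting the standing hypothesis of generic linear independence. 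This uses dicriticality exactly once (to rectify $\tilf_X$ transversally to the divisor) and entirely avoids the infinite-order bookkeeping on which your argument founders; your final step should be replaced by something of this kind.
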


\begin{proof}
Denote by $\widetilde{X}$ and $\widetilde{Y}$ the blow-ups of $X$ and $Y$ at the origin. Similarly,
$\tilf_x$ and $\tilf_Y$ will stand for the blow ups of the foliations $\fol_X$ and $\fol_Y$.
Since the vector fields $X$ and $Y$
have zero linear part at the origin, there follows that both $\widetilde{X}$ and $\widetilde{Y}$ vanish identically
over the exceptional divisor $\pi^{-1} (0) \simeq \CP^2$. Now assume that, say, X is dicritical for $\pi$. Then
the leaf of $\tilf_X$ is regular and transverse to $\pi^{-1} (0)$ at generic points of $\pi^{-1} (0)$.
Therefore, around one such point, we can choose
local coordinates $(u,v,w)$ such that $\{ u=0\} \subset \pi^{-1} (0)$ and where
of $\tilf_X$ is represented by the (regular) vector field $\partial /\partial u$. In particular the blow-up
$\widetilde{X}$ takes on the form $f(u,v,w) \partial /\partial u$ where $f$ is a holomorphic function (divisible by $u$).
In these coordinates, let the blow-up $\widetilde{Y}$ be given by $\widetilde{Y} = f_1 \partial /\partial u
+ f_2 \partial /\partial v + f_3 \partial /\partial w$. Since $[\widetilde{X}, \widetilde{Y}] =0$, there follows
that $f_2$ and $f_3$ do not depend on the variable $u$. However, these functions must vanish identically for $u=0$
since $\widetilde{Y}$ vanishes identically over $\pi^{-1} (0)$ (locally given by $\{ u=0\}$). Thus they must
vanish identically over an open set and this contradicts the fact that $X$ and $Y$ span a codimension~$1$ foliation.
\end{proof}

\begin{obs}
	{\rm The argument above shows the importance of transforming vector fields, as opposed to foliations, in certain
		cases. In fact, the proof of Lemma~\ref{nondicriticalvectorfields} hinges from the fact that the transform
		of the vector field $Y$ vanishes identically over the exceptional divisor $\pi^{-1} (0)$ - something that does not make
		sense for a foliation since the singular set of the latter has codimension at least~$2$.
		
		Along similar lines, to ensure that the transformed vector field $\widetilde{Y}$ vanishes identically
		over $\pi^{-1} (0)$, the fact that the origin (center of the blow up) is contained in the
		singular set of $\fol_X$ (or more generally, the center of the blow up is invariant under the foliation)
		was implicitly used. This is in line with the discussion in Section~\ref{transformingvectorfields}.
		It is often important that the transformed vector field retains its holomorphic character. In addition,
		in quite a few cases, it is also important that the zero-divisor of the transformed vector field
		contains all components of the exceptional divisor arising from the resolution procedure.
		
		Plenty of additional examples of this issue can be found in the theory of semicomplete vector fields,
		see for example \cite{Ghys-R}, \cite{guillotadvances}, or \cite{Guillot-Rebelo}.}
\end{obs}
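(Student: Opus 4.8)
The content of this Remark can be made rigorous by a direct computation in the one-point blow-up, and the plan is to carry it out in the chart $\pi(x,u_2,\dots,u_n)=(x,xu_2,\dots,xu_n)$ of $\widetilde{\C}^n$, in which the exceptional divisor $E=\pi^{-1}(0)$ is the hyperplane $\{x=0\}$. First I would verify the vanishing invoked in the proof of Lemma~\ref{nondicriticalvectorfields}. Writing $Y=\sum_i a_i\,\partial/\partial x_i$ with each $a_i$ of order at least~$2$ (the zero-linear-part hypothesis), a short calculation shows that in this chart the total transform $\pi^{\ast}Y$ has all of its coefficients divisible by~$x$: every monomial $x_1^{\alpha_1}\cdots x_n^{\alpha_n}$ with $\sum_i\alpha_i\geq 2$ becomes $x^{\sum_i\alpha_i}u_2^{\alpha_2}\cdots u_n^{\alpha_n}$, and this extra power of~$x$ compensates the single $1/x$ factor produced when $\partial/\partial x_j$ is re-expressed in the blow-up coordinates. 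Hence $\pi^{\ast}Y$ vanishes identically along $E$, which is precisely the property used when one straightens $\tilf_X$ to $\partial/\partial u$ and concludes that the remaining components of $\widetilde{Y}$ must vanish on $\{u=0\}$.

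Next I would make precise the contrast with foliations. Here the argument is essentially definitional: by Definition~\ref{1-dimensional_foliation_definition} a representative of a $1$-dimensional foliation has singular set of codimension at least~$2$ and therefore admits no divisor of zeros. Consequently, if one transforms the \emph{foliation} $\fol_Y$ rather than the vector field $Y$, the divisorial factor $x$ detected above must be divided out in order to recover a legitimate representative; the resulting field is generically non-zero along $E$ and the assertion ``$\widetilde{Y}$ vanishes on $E$'' becomes vacuous. This shows the vanishing is genuinely a feature of the vector field and is destroyed upon passage to its foliation, exactly as the Remark claims.

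Finally I would address the role of the center together with the holomorphy issue, following the discussion in Section~\ref{transformingvectorfields}. The relevant general fact is that the total transform of a holomorphic vector field under a blow-up remains holomorphic as soon as the center is invariant by the field -- in particular when it is contained in the singular set -- and the order of vanishing of the transform along the new divisor then reflects the order of $Y$ along that center. In the case at hand $0$ is a common singular point of $X$ and $Y$, which simultaneously yields holomorphy of $\widetilde{Y}$ and its vanishing on $E$. The main point of care, and the only genuinely delicate step, is to keep apart two conditions that the Remark phrases together: being merely singular (order at least~$1$, i.e. the center lying in $\mathrm{Sing}$) already guarantees holomorphy and the invariance of $E$, whereas the identical vanishing of $\widetilde{Y}$ along $E$ exploited in Lemma~\ref{nondicriticalvectorfields} truly requires the stronger zero-linear-part hypothesis (order at least~$2$). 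The two closing assertions -- that retaining holomorphy matters and that one often wants the zero-divisor of the transform to contain every exceptional component -- are best seen through the semicomplete examples of \cite{Ghys-R}, \cite{guillotadvances}, and \cite{Guillot-Rebelo} rather than established here.
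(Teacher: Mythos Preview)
Your elaboration is correct, but there is nothing in the paper to compare it against: the statement in question is a \emph{Remark} (the \texttt{obs} environment), and the paper provides no proof for it. It is simply commentary on the argument used in Lemma~\ref{nondicriticalvectorfields}, pointing out why one had to track the vector field $Y$ itself rather than just its underlying foliation, and referring back to Section~\ref{transformingvectorfields} and to the semicomplete literature for context. There is no ``paper's own proof'' here.

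That said, what you wrote is a sound and useful fleshing-out of the Remark. Your blow-up computation in the chart $\pi(x,u_2,\dots,u_n)=(x,xu_2,\dots,xu_n)$ is correct and shows concretely why order $\geq 2$ at the origin forces $\pi^{\ast}Y$ to vanish along $E=\{x=0\}$; your explanation of why this vanishing is lost when one passes from the vector field to its associated foliation (one must divide out the divisorial factor to get a representative with codimension-$\geq 2$ singular set) is exactly the point the Remark is making; and your final paragraph correctly separates the two hypotheses at play --- invariance of the center suffices for holomorphy of the transform, while the stronger order-$\geq 2$ condition is what gives the identical vanishing on $E$ that the proof of Lemma~\ref{nondicriticalvectorfields} actually uses. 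This last distinction is a genuine clarification, since the Remark as written blurs the two slightly.
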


Lemma~\ref{nondicriticalvectorfields} shows that both $\fol_X$ or $\fol_Y$ must induce a foliation on
$\CP^2$. If the foliations induced are different, then it follows from the discussion in Section~\ref{core_dynamics}
that $\pi^{-1} (0)$ is invariant by $\cald$. Hence we can assume that they do coincide. Recalling that the {\it order}\,
of a vector field at a singular point $p$ is nothing but the degree of the first non-zero homogeneous component
of its Taylor series based at the point in question, the preceding implies:

\begin{lemma}\label{differentorders}
The orders at the origin of $X$ and $Y$ can be assumed to be different.
\end{lemma}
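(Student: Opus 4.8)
The plan is to argue by contradiction, assuming that $X$ and $Y$ have the same order, say $k$, at the origin and deriving a contradiction with the hypotheses of Theorem~\ref{teo_RR_separatrix} (most notably with the fact that $[X,Y]=0$ and that $X$, $Y$ span a genuine codimension~$1$ foliation, i.e. they are not everywhere parallel). Write $X = X_k + \mathrm{h.o.t.}$ and $Y = Y_k + \mathrm{h.o.t.}$, where $X_k$, $Y_k$ are the first non-zero homogeneous components. By the discussion following Definition~\ref{core_foliation}, and by Lemma~\ref{nondicriticalvectorfields} together with the paragraph preceding the statement, we are in the situation where neither $\fol_X$ nor $\fol_Y$ is dicritical for the one-point blow-up at the origin and the foliations they induce on $\CP^2$ coincide: in other words, $X_k$ and $Y_k$ induce the \emph{same} core foliation on $\CP^2$. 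Since a homogeneous vector field inducing a given foliation on $\CP^2$ is unique up to a multiplicative constant and the addition of a multiple of the radial vector field $R$ (by the degree-$k$ homogeneous functions), there follow homogeneous identities of the shape
\[
X_k = c\, Y_k + a(x,y,z)\, R, \qquad
\]
with $c$ a nonzero constant and $a$ a homogeneous polynomial of degree $k-1$ (and symmetrically the roles could be exchanged; the point is that $X_k$ and $Y_k$ differ only by a multiple of $R$).

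First I would extract the constraint coming from $[X,Y]=0$ at lowest order. Since $X$ and $Y$ have the same order $k\ge 1$, the bracket $[X,Y]$ has order at least $2k-1$, and its homogeneous component of degree $2k-1$ equals $[X_k,Y_k]$; hence $[X_k,Y_k]=0$. Substituting $X_k = cY_k + aR$ and using the Euler-type relation $[R, Y_k] = (k-1)\, Y_k$ (valid since $Y_k$ is homogeneous of degree $k$), one computes
\[
0 = [X_k, Y_k] = c\,[Y_k,Y_k] + [aR, Y_k] = a\,[R,Y_k] + (Y_k\!\cdot\! a)\, R - \cdots
\]
so that the identity collapses to a relation of the form $(k-1)\,a\, Y_k = (\text{something}) \cdot R$, i.e. $a\, Y_k$ is (a multiple of) the radial vector field. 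Because $Y_k$ is, by assumption, \emph{not} a multiple of $R$ (non-dicriticality, Lemma~\ref{lemma1_dicritical}), this forces $a \equiv 0$ wherever $Y_k$ is independent of $R$, hence $a\equiv 0$ identically; but then $X_k = cY_k$, so $X - cY$ has order strictly greater than $k$.

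Having reduced to the case $X_k = c Y_k$, I would then iterate: replace $X$ by $X' = X - cY$, which still commutes with $Y$ and still spans (together with $Y$) the same codimension~$1$ foliation $\cald$ away from the locus where $X'$ and $Y$ become parallel — one has to check that $X'$ and $Y$ are still linearly independent at generic points, which holds because $X$ and $Y$ were and the correction is by a constant multiple of $Y$. Now $X'$ has order $k' > k$, and we are back in the situation analyzed before Lemma~\ref{differentorders}: either $X'$ is dicritical for the blow-up at the origin (excluded by the argument of Lemma~\ref{nondicriticalvectorfields} applied to $X'$ and $Y$, whose linear parts may now be nonzero, so one must instead invoke directly the characterization of the core foliation), or the first non-zero component of $X'$ and that of $Y$ induce the same core foliation on $\CP^2$, or $\pi^{-1}(0)$ is invariant by $\cald$ and there is nothing more to prove. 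In the first two cases we repeat the normalization step, each time strictly increasing the order of the $X$-component; since the order cannot increase indefinitely while the vector field remains non-trivial (and $X - (\text{polynomial})\cdot Y$ cannot vanish identically, as $X$ and $Y$ are generically independent), the process terminates, and at termination the two orders are different, as claimed.

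The main obstacle I anticipate is controlling what ``the core foliation of $Y$ coincides with the core foliation of the current $X$-component'' actually forces when the two vector fields no longer have the same order, since in that regime one cannot simply write $X_k = c Y_{k'} + aR$ with a \emph{constant} $c$; rather one needs that the projectivization of $X_{k}$ and of $Y_{k'}$ define the same one-dimensional foliation on $\CP^2$, and the precise algebraic consequence of that (a relation $P(x,y,z)\, Y_{k'} = Q(x,y,z)\, X_{k} + (\ldots)R$ with $P,Q$ homogeneous of appropriate degrees, $P$ not identically zero) is what has to be fed back into the bracket identity $[X,Y]=0$. Making this bookkeeping clean, and ruling out the degenerate ``essentially linear'' configurations that were set aside in Lemma~\ref{nondicriticalvectorfields}, is where the care is needed; everything else is a finite induction on the order of the $X$-component.
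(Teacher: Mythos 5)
Your proof is correct and, in substance, follows the same strategy as the paper: since the orders agree and the two core foliations coincide, write the leading homogeneous parts as $X^H = c\, Y^H + a\, R$, show the radial correction $a$ must vanish, and then replace $X$ by $X - cY$ to raise its order. The one genuine difference is how $a$ is eliminated. You extract $[X^H, Y^H]=0$ from the lowest-degree component of $[X,Y]=0$ and compute the bracket directly, using $[R,Y^H]=(k-1)Y^H$ and the generic independence of $Y^H$ and $R$; this works (note you need $k\geq 2$ so that $k-1\neq 0$, which holds because both first jets vanish in this part of the argument). The paper instead sets $Z = X - Y$ and observes that if $a \not\equiv 0$ then $Z$ is dicritical while still commuting with $X$ and spanning $\cald$, contradicting Lemma~\ref{nondicriticalvectorfields}; that route recycles an already-proved lemma rather than redoing a computation. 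Where your write-up overshoots is at the end: once you have $X' = X - cY$ with ${\rm ord}(X') \geq k+1 > k = {\rm ord}(Y)$, the orders are different and the lemma is proved --- there is nothing to iterate. The ansatz $X^H = cY^H + aR$ with constant $c$ was only available because the degrees were equal; once they differ, the question of what the coincidence of core foliations forces is no longer part of this statement, and the ``bookkeeping'' you worry about in your final paragraph is precisely the content of the next step of the argument (Proposition~\ref{existenceoffirstintegrals}), not of this lemma.
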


\begin{proof}
	Assume that $X$ and $Y$ have the same order at the origin. Because they induce the same foliation on $\CP^2$,
	they will differ by a multiple of the radial vector field (up to multiplying, say X, by a non-zero constant).
Hence, by considering $Z = X-Y$, there follows that the foliation $\cald$ is still spanned by $X$ and $Z$ and the
we still have $[X,Z]=0$. This is however impossible since $Z$ is clearly dicritical so that a contradiction with
Lemma~\ref{nondicriticalvectorfields} arises at once.
\end{proof}

Denote by $X^H$ (resp. $Y^H$) the first non-zero homogeneous component of $X$ (resp. $Y$) at the origin.
Owing to the above lemma, we can assume that the degree of $Y^H$ is {\it strictly greater}\, than the degree of $X^H$.
The preceding implies that the core dynamics of either $X^H$, $Y^H$ coincides with the core dynamics of the
dicritical foliation $\cald$. Now the following proposition provides some serious control on the core dynamics
in question and, along with its analogue for blow ups centered at curves, constitutes a fundamental starting point
of the discussion conducted in \cite{RR_separatrix}.

\begin{prop}\label{existenceoffirstintegrals}
	The vector field $X^H$ admits a non-constant meromorphic/holomorphic first integral.
\end{prop}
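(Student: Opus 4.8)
The plan is to read off the first integral directly from the commutation relation $[X,Y]=0$ together with the fact, established in the discussion preceding the statement, that $X^H$ and $Y^H$ induce one and the same foliation on $\CP^2$. First I would record two observations. Writing $X=X^H+(\textrm{higher order})$ and $Y=Y^H+(\textrm{higher order})$ and denoting by $a$ (resp. $b$) the order of $X$ (resp. $Y$) at the origin, so that $b>a$ by Lemma~\ref{differentorders} and the choice made after it, the homogeneous component of lowest degree $a+b-1$ of $[X,Y]$ is exactly $[X^H,Y^H]$; hence $[X,Y]=0$ forces $[X^H,Y^H]=0$. On the other hand, $X^H$ is not a multiple of the radial vector field $R$ (otherwise $\fol_X$ would be dicritical, contrary to Lemma~\ref{nondicriticalvectorfields}), so $X^H$ and $R$ are linearly independent at a generic point of $\C^3$; and since $X^H$ and $Y^H$ define the same direction in $T\CP^2$ away from a proper subvariety, $Y^H(p)$ lies in the plane spanned by $X^H(p)$ and $R(p)$ for generic $p$. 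Solving the resulting (overdetermined) linear system by Cramer's rule yields rational functions $\phi,\psi$ on $\C^3$, homogeneous of respective degrees $b-a\ge 1$ and $b-1\ge 1$, such that $Y^H=\phi\,X^H+\psi\,R$.

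Next I would substitute this identity into $[X^H,Y^H]=0$. Applying the Leibniz rule for the Lie bracket together with $[X^H,X^H]=0$ and the Euler relation $[R,X^H]=(a-1)X^H$ (valid because $X^H$ is homogeneous of degree $a$), one obtains
$$
0=[X^H,\ \phi\,X^H+\psi\,R]=\bigl(X^H(\phi)-(a-1)\psi\bigr)X^H+X^H(\psi)\,R\,.
$$
Since $X^H$ and $R$ are generically independent, both coefficients vanish identically as rational functions. In particular $X^H(\psi)=0$, that is, $\psi$ is a first integral of $X^H$, and moreover $X^H(\phi)=(a-1)\psi$.

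It remains only to rule out the degenerate case in which this first integral is constant. As $Y^H\not\equiv 0$, the pair $(\phi,\psi)$ is not identically zero. If $\psi\not\equiv 0$, then being a nonzero homogeneous rational function of positive degree $b-1$ it is non-constant, hence a non-constant meromorphic first integral of $X^H$, and the proposition follows. If $\psi\equiv 0$, then $Y^H=\phi\,X^H$ with $\phi\not\equiv 0$ homogeneous of positive degree $b-a$, and the relation $X^H(\phi)=(a-1)\psi=0$ exhibits $\phi$ itself as a non-constant meromorphic first integral of $X^H$. I do not expect a serious obstacle here: once the two inputs above are in place the argument is essentially formal, and the only points demanding a little care are the degree bookkeeping — that $[X^H,Y^H]$ is the lowest-order term of $[X,Y]$ and that $\phi,\psi$ are homogeneous of positive degree — and the correct use of the Euler relation. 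A similar argument, with the radial vector field replaced by the generator of the relevant quasi-homogeneous filtration, handles the variant needed in \cite{RR_separatrix} for blow-ups centered along curves of singular points.
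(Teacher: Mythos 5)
Your argument is correct and is essentially the paper's own proof in different notation: writing $Y^H=\phi\,X^H+\psi\,R$ is the same homogeneous dependence relation the paper extracts from dicriticality (there written $f^HX^H+g^HY^H=h^HR$, so $\psi=h^H/g^H$), and the bracket computation via the Euler relation $[R,X^H]=(a-1)X^H$ is identical, yielding $X^H(\psi)=0$. Your non-constancy argument (a non-zero homogeneous rational function of positive degree cannot be constant) is a slightly more direct version of the paper's degree contradiction, and you handle the case $\psi\equiv 0$ explicitly where the paper only remarks it is easy to adapt.
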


\begin{proof}
	Owing again to the discussion in Section~\ref{core_dynamics}, the dicritical nature of $\cald$
	ensures the existence of holomorphic functions $f$ and $g$ such that
	\begin{equation}
	fX + gY = Z \, , \label{fgh1}
	\end{equation}
	with $Z$ being a holomorphic vector field whose first non-zero homogeneous component at the origin
	is a multiple of the radial vector field~$R$. Denoting by $f^H,g^H$ the first non-zero homogeneous
	components of $f,g$, there follows from the preceding that $f^H X^H$ and $g^H Y^H$ must have the
	same degree. Furthermore, we have a homogeneous equation
	\begin{equation}
	f^H X^H + g^H Y^H = h^H R \label{fghH}
	\end{equation}
	where $h^H$ is a homogeneous polynomial - possibly identically zero. In the sequel we assume that $h^H$ does
	not vanish identically since it is easy to adapt the discussion below to cover this case as well.

Because $X,Y$ commute, so do $X^H ,Y^H$. Thus we have
\begin{eqnarray*}
	[ X^H , Y^H ] &=& \left[ X^H , \frac{h^H}{g^H} R - \frac{f^H}{g^H} X^H \right]\\
	&=&  \left[ X^H.\left(\frac{h^H }{g^H}\right) \right] R - \frac{h^H}{g^H} [R,X^H] - \left[X^H.\left(\frac{f^H}{g^H}\right)\right] X^H\\
	&=& \left[ X^H.\left(\frac{h^H}{g^H}\right) \right] R - \left[(d-1) \frac{h^H}{g^H} - X^H.\left(\frac{f^H}{g^H}\right)\right] X^H\\
	&=& 0
\end{eqnarray*}
where $d$ stands for the degree of $X^H$. In particular
\[
\left[ X^H.\left(\frac{h^H}{g^H}\right) \right]  R = \left[(d-1) \frac{h^H}{g^H} - X^H.\left(\frac{f^H}{g^H}\right)\right] X^H \, .
\]
The expression between brackets on the left hand side (i.e. the
expression multiplying $R$) must vanish identically for otherwise $X^H$
would be a multiple of the Radial vector field~$R$. It then follows that
$$
X^H.\left(\frac{h^H}{g^H}\right) =0 \, .
$$
In other words,  $h^H/g^H$ is a meromorphic (possibly holomorphic) first integral for $X^H$.

It only remains to prove that $h^H/g^H$ is not constant. However, if this function is constant
(different from zero since $h^K$ does not vanish identically), then
we can assume
$h^H/g^H = 1$ without loss of generality. Hence dividing
(\ref{fghH}) by $g^H$, it would follow
\[
\frac{f^H}{g^H} X^H + Y^H = R .
\]
This last equation is however impossible since $Y^H$ has degree at least~$2$ and the expression
$f^HX^H/g^H$ is homogeneous. Therefore $h^H /g^H$ cannot be constant. Since the argument
is symmetric in the vector fields $X, Y$, the last assertion completes our proof.
\end{proof}

The key to prove Theorem~\ref{teo_RR_separatrix} is to observe that the core dynamics of dicritical components
of a foliation like $\cald$ must leave invariant certain algebraic curves. Clearly
Proposition~\ref{existenceoffirstintegrals} along with some refinements play a role in this proof.

However, it is also clear that the discussion leading to Proposition~\ref{existenceoffirstintegrals} depends
heavily on the vanishing assumption for the first jet of $X$, $Y$ at the origin. This issue requires to consider
separately some special situations that are referred to as ``linear foliations'' in the sense that there is
a {\it non-zero first jet involved}. Not surprisingly, ``linear foliations'' can be dealt with through rather
direct methods.

Finally, as it is inevitable in dimension~$3$, every desingularization procedure requires {\it two types of blow-ups}:
beyond blow-ups centered at points, blow-ups centered at curves are needed as well. In particular,
another basic ingredient in the proof of Theorem~\ref{teo_RR_separatrix} will be analogues, both in
``linear'' and ``non-linear'' settings, of the previous results. This issue has already appeared
in Section~\ref{resolution_blowups} (cf. the discussion about Equation~\ref{decreasingmultiplicity})
and can easily lead to misunderstandings so that it seems convenient to close this paragraph
by carefully explaining the appropriate formulations.

The following example was pointed out to us by D. Cerveau. It helps to explain the notion of
``zero first jet'' in the case of blow-ups centered at smooth curves. The example also highlights difficulties
related to the existence of first integrals (cf. Proposition~\ref{existenceoffirstintegrals} and a few
other intermediary results used in \cite{RR_separatrix} and not explicitly mentioned here).

\begin{ex}\label{fortimebeingunique}
Consider the pair of vector fields $X, Y$ given by
$$
X = zy \frac{\partial}{\partial y} + z^2 \frac{\partial}{\partial z} \; \qquad {\rm and} \; \qquad Y = x^2 \frac{\partial}{\partial x} + axy \frac{\partial}{\partial y} \, \,
$$
which are quadratic at the origin. Note that
these two vector fields commute so that they span a codimension~$1$ foliation denoted by $\cald$.

The axis $\{ y=z=0\}$ is invariant by both $X$ and $Y$. This axis is also contained in the singular set of $\cald$.
Let us then consider the blow-up of $\C^3$ centered at the axis $\{ y=z=0\}$ along with the
corresponding transforms of $\cald$, $X$, and $Y$.
It is immediate to check $\cald$ is dicritical for the blow up in question. Similarly the foliation $\fol_X$
associated with $X$ is also dicritical for this blow up which might lead to some confusion with
Lemma~\ref{nondicriticalvectorfields}.

The explanation for this example lies in the fact that the vector field $Y$ is regular (non-zero) at generic points of
the axis $\{ y=z=0\}$. Similarly, its transform under the previous blow up is regular at generic points
of the exceptional divisor. In other words, this case must be considered as a ``linear one'' and the order
of $Y$ with respect to this blow up must, indeed, be equal to {\it zero}. An adequate definition of the
order of a vector field with respect to the center of a blow up is included below.
\end{ex}

Let us then provide an accurate definition of {\it order}\, of a vector field when a curve, as opposed to a single
point, is blown-up. To explain the idea, consider first a holomorphic vector field $X$
with a singular point at the origin along with the corresponding Taylor series. The
order of $X$ at the origin is said to be the degree of the first non-zero homogeneous component of its Taylor expansion. This can
also be viewed as the integer $d$ for which the limit
\[
\lim_{\dl \rightarrow 0} \frac{1}{\dl^{d-1}} \Gamma_{\dl}^{\ast} X
\]
yields a (non identically zero) holomorphic vector field.
Here $\Gamma_{\dl}^{\ast} X$ stands for the pull-back of $X$ by the homothety
$\Gamma_{\dl} : (x,y,z) \mapsto (\dl x, \dl y, \dl z)$. Note that the limit above corresponds to the first non-zero
homogeneous component of the Taylor's expansion of $X$ at the origin. Next, assume now that $C = \{y=z=0\}$ is contained
in the singular set of $X$ so that the blow-up centered along this curve of singular points will be considered.
The order of $X$ with respect to $C$ is defined as the integer $d$ for which
\[
\lim_{\dl \rightarrow 0} \frac{1}{\dl^{d-1}} \Lambda_{\dl}^{\ast} X
\]
is a (non identically zero) holomorphic vector field, where $\Lambda_{\dl}^{\ast} X$ denotes the pull-back of
$X$ by the homothety $\Lambda_{\dl} : (x, y, z) \mapsto (x, \dl y, \dl z)$. The limit above, for the appropriate
choice of $d$, is said to be the first
non-zero homogeneous component of $X$ with respect to the variables $x,y$. In general, the cases in
\cite{RR_separatrix} that are called {\it linear}\, are those cases in which the vector field has order~$1$ or
zero, with respect to the center of the blow up in question. In particular, with the above definition,
it can immediately be checked
that the vector field $Y$ of Example~\ref{fortimebeingunique} has order {\it zero} with respect to $C = \{y=z=0\}$,
although its order at the origin is~$2$.


\subsection{Separatrices for foliations of dimension~$1$}\label{separatrices_1dimension}

This paragraph is devoted to discussing in detail the problem about existence of separatrices
{\it for $1$-dimensional foliations}. Contrasting with the case of codimension~$1$ foliations,
it will soon be seen that the influence of core dynamics in the existence of these separatrices
is rather limited.
In fact, the existence of separatrices for foliations of dimension~$1$ is an phenomenon having, in a suitable sense,
a {\it very local nature}: it essentially hinges from two basic ingredients, namely:
\begin{itemize}
	\item The analysis of simple singularities which is basically conducted by direct methods involving
	normal forms and divergent series.
	
	\item Geometric considerations involving the relative positions of the simple singularities in question.
\end{itemize}
In this regard, and provided that a convenient resolution of singularities theorem is available,
the problem somehow retains the same nature regardless of the dimension. More precisely, the difficulties arising
from increasing the dimension stem either from the evident fact that simple singularities are not always easy to
describe (e.g. saddle-nodes of high codimension) and from the fact that the number of possible arrangements
of their relative positions increase as well.

As previously said, after Camacho-Sad theorem in \cite{camachosad}
establishing the existence of separatrices for every foliation on $(\C^2,0)$, Gomez-Mont
and Luengo found a foliation on $(\C^3,0)$ that admits no separatrix.
Let us begin by providing an outline of their construction.

\subsection{On Gomez-Mont and Luengo counterexample}
Their example of foliation without separatrix on $(\C^3,0)$ relies on two simple remarks.
Consider a foliation $\fol$ on $(\C^3,0)$ given by a holomorphic vector field satisfying the following conditions
\begin{itemize}
\item[(1)] The origin $(0,0,0) \in \C^3$ is an isolated singularity of $X$

\item[(2)] $J^1 X (0,0,0) = 0$ but $J^2 X (0,0,0) \ne 0$, where $J^k X (0,0,0)$ stands for the jet of order $k$ of $X$ at the origin
($k=1,2$).

\item[(3)] The quadratic part $X^2$ of $X$ at $(0,0,0)$ is a vector field whose singular set has codimension~$2$. Also $X^2$ is not
a multiple of the Radial vector field $x \partial /\partial x + y \partial /\partial y + z \partial /\partial z$.
\end{itemize}
Assume that $\fol$ has a separatrix $C$ and consider the blow-up $\tilf$ of $\fol$ centered at the origin. Denote by $\pi$
the blow-up map so that $\tilf = \pi^{\ast} \fol$ and let $\pi^{-1}(0)$ denote the exceptional divisor isomorphic to $\CP^2$.
Since $X^2$ is not a multiple of the Radial vector field, there follows that $\pi^{-1}(0)$ is invariant by $\tilf$
so that the restriction of $\tilf$ to
$\pi^{-1}(0)$ can be seen as a foliation of degree~$2$ on $\CP^2$ (cf. item (3)).

Because $\pi^{-1}(0)$ is invariant by $\tilf$, the transform $\pi^{-1}(C)$ of the separatrix $C$ can only intersect
$\pi^{-1}(0)$ at singular points of $\tilf$. Furthermore, all of these singular points lie in $\pi^{-1}(0)$
since $X$ has an isolated singularity at the origin. In other words, $\pi^{-1}(C)$ must be a separatrix (not contained in
$\pi^{-1}(0)$) for one of the singular points of $\tilf$.

Now, the second ingredient is as follows: as a foliation of degree~$2$ on $\CP^2$, the restriction
$\tilf|_{\pi^{-1}(0)}$ of $\tilf$ to $\pi^{-1}(0)$ has at most (and generically)
$7$ singular points. Since it is hard to control the position of $7$ points in $\CP^2$, the authors
of~\cite{GM-L} started from a foliation satisfying the following conditions:
\begin{itemize}
\item[(A)] The foliation of degree~$2$ has only~$3$ singular points (we can think of the foliation
as obtained by letting
some of the $7$ singular points of a generic quadratic foliation to ``collide in groups''). Naturally the position
of~$3$ points in $\CP^2$ can easily be controlled.

\item[(B)] Each of the~$3$ singular points will have an eigenvalue equal to zero in the direction transverse
to $\pi^{-1}(0)$. The~$3$ singular points are therefore saddle-node singularities (in dimension~$3$).

\item[(C)] Furthermore, the authors arrange for the saddle-node singularities to have two equal
(and non-zero) eigenvalues tangent to $\pi^{-1}(0)$.
In other words, the singular points in question are (codimension~$1$) resonant saddle-nodes
with weak direction transverse to $\pi^{-1}(0)$.

\item[(D)] As is well known, it is easy to produce examples of codimension~$1$
saddle-nodes all of whose separatrices are included in an invariant ($2$-dimensional) plane tangent to the
directions of the non-zero eigenvalues.

\end{itemize}

The remainder of the proof in \cite{GM-L} consists of showing that it is, indeed, possible to
prescribe a quadratic $X^2$ and a cubic $X^3$
homogeneous components for the vector field $X$ so as to satisfy all of the preceding conditions. In this respect, note
that conditions (A), (B), and (C) depend only on the quadratic part $X^2$. The role played by the
appropriately chosen cubic parte $X^3$ can be summarized as follows.
\begin{itemize}
\item[$\bullet$] it ensures that each of the singular points of $\tilf$ are isolated, hence coinciding with the corresponding singular points of $\tilf|_{\pi^{-1}(0)}$.
Here the reader may note that the homogeneous foliation associated with $X^2$ has singularities all along the fibers
of $\widetilde{\C}^3 \to \widetilde{\pi}^{-1}(0)$ sitting over the singular points of $\tilf|_{\pi^{-1}(0)}$.
A higher order perturbation of $X^2$ is thus needed to provide isolated singular points for the blown-up foliation.

\item[$\bullet$] having ensured the singular points are isolated, the cubic part $X^3$ of $X$ also takes care of condition~(D)
\end{itemize}

As mentioned, the verification that all these conditions are compatible is conducted in
\cite{GM-L} with the assistance of suitable software to deal with formal computations.

\subsection{Vector fields and $2$-dimensional Lie algebras}
In \cite{RR_separatrix}, codimension~$1$ foliations spanned by pairs of commuting vector fields were considered
and it was shown that this condition imposes strong constraints on the core dynamics of dicritical components of
the codimension~$1$ foliation in question. In particular, these constraints have proved to be strong enough to
yield the existence of separatrices for the foliation in question.

In view of the preceding, it was natural to wonder if the $1$-dimensional foliations arising from the vector fields
in question would have separatrices themselves. While the answer turned out to be affirmative, the assumption of
having two commuting vector fields can be weakened to encompass also the case of pairs of vector fields generating
the Lie algebra of the affine group. In fact, the following theorem was proved in~\cite{RR_secondjet}:

\begin{theorem}\cite{RR_secondjet}\label{teo_RR_secondjet}
Let $X$ and $Y$ be two holomorphic vector fields defined on a neighborhood $U$ of $(0,0,0) \in \C^3$ which are not linearly
dependent on all of $U$. Suppose that $X$ and $Y$ vanish at the origin and that one of the following conditions holds:
\begin{itemize}
  \item $[X,Y]=0$;
  \item $[X,Y]=c \, Y$, for a certain $c \in \C^{\ast}$.
\end{itemize}
Then there exists a germ of analytic curve $\mathcal{C} \subset \C^3$ passing through the origin and simultaneously
invariant under $X$ and $Y$.
\end{theorem}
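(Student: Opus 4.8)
The plan is to reduce the statement to a two–dimensional problem on an invariant surface and then exploit a simple Lie–derivative identity. Since $X$ and $Y$ are not linearly dependent on all of $U$, the locus $\{X\wedge Y=0\}$ is a proper analytic subset and the two vector fields span a distribution of planes on a dense open set; in both of the assumed cases $[X,Y]$ lies in the span of $X$ and $Y$, so this distribution is involutive and defines a germ of codimension~$1$ foliation $\cald$ at the origin. In the commuting case, Theorem~\ref{teo_RR_separatrix} provides a separatrix $S$ for $\cald$; in the affine case $[X,Y]=c\,Y$ one first needs the analogous statement for $\cald$, but the argument of \cite{RR_separatrix} applies with no essential change, since its control of the core dynamics of dicritical components (cf. Proposition~\ref{existenceoffirstintegrals}) uses only that $[X,Y]$ belongs to the span of $X$ and $Y$. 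Either way, because $X$ and $Y$ are tangent to $\cald$ they are tangent to the leaves filling $S$, hence tangent to $S$ as an analytic set; thus $S$ is simultaneously invariant under $X$ and under $Y$, and it suffices to produce a germ of curve through the origin, contained in $S$, invariant by $X|_S$ and $Y|_S$.

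The second step is an elementary remark. Working where $S$ is smooth, set $\theta=X|_S\wedge Y|_S$, a holomorphic section of $\Lambda^2 TS$ which is not identically zero because $X$ and $Y$ span the tangent planes of $\cald$ — hence of the leaf–closure $S$ — at generic points. Using the Leibniz rule $[Z,A\wedge B]=[Z,A]\wedge B+A\wedge[Z,B]$ for the Schouten bracket together with $[X,X]=[Y,Y]=0$ and the hypothesis on $[X,Y]$, one gets $\mathcal L_X\theta=X\wedge[X,Y]=c'\theta$ for a constant $c'$ and $\mathcal L_Y\theta=-[X,Y]\wedge Y=0$. Therefore the zero divisor $Z=\{\theta=0\}$, a nonempty proper analytic subset of $S$, is invariant under both $X$ and $Y$, so every irreducible component of $Z$ is automatically a curve invariant by the two vector fields simultaneously. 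All that remains is to locate a component of $Z$ passing through the origin (after projecting down, should $S$ be singular there).

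For this I would distinguish three cases according to the local structure of $S$ at $0$. If $S$ is smooth at the origin, then $X(0)=Y(0)=0$ forces $0\in Z$, so the component of $Z$ through $0$ does the job. If $\mathrm{Sing}(S)$ is one–dimensional and contains $0$, then the component of $\mathrm{Sing}(S)$ through $0$ is itself invariant by $X$ and $Y$, since any holomorphic vector field tangent to the analytic set $S$ preserves $\mathrm{Sing}(S)$. The delicate case is an isolated surface singularity $\mathrm{Sing}(S)=\{0\}$: here one desingularizes $S$, choosing the centers of the successive blow-ups to be invariant so that the transforms $\widetilde X,\widetilde Y$ stay holomorphic and tangent to the exceptional curve $E$ (this is precisely the point emphasized in Section~\ref{transformingvectorfields}). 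A short local computation shows $E\subseteq\{\widetilde X\wedge\widetilde Y=0\}$; if this divisor has a component meeting but not contained in $E$, its image is the curve we want, and otherwise one is in the genuinely degenerate situation in which $\widetilde X,\widetilde Y$ frame the tangent bundle off $E$.

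I expect this last, degenerate sub-case to be the main obstacle: there the flows of $X$ and $Y$ define a free local action of the two–dimensional abelian (or affine) Lie algebra on $S\setminus\{0\}$ with open orbits, and one must rule this out — or reduce it to an essentially linear configuration — using $X(0)=Y(0)=0$ together with the same constraints on the core dynamics of dicritical components already exploited in \cite{RR_separatrix}; this is of a piece with the separate treatment of ``linear foliations'' and of low–order jets that also enters the proof of Theorem~\ref{teo_RR_separatrix}. Running parallel to it, the genuinely laborious part will be the bookkeeping needed to guarantee that, after all the blow-ups, the invariant curve one extracts still projects to a curve through the origin — that is, controlling the relative positions of the resolved singularities, exactly the kind of geometric input highlighted among the two basic ingredients in Section~\ref{separatrices_1dimension}. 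By contrast, the existence of the invariant surface, the resolution theorems, and the Lie–derivative identity above are comparatively soft.
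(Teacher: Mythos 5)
Your overall skeleton --- produce an invariant surface $S$, observe that the tangency locus of $X|_S$ and $Y|_S$ is invariant under both vector fields, then chase a one-dimensional component of it through the origin --- is the same as the paper's, and your Lie-derivative identity is a correct way of seeing the invariance of ${\rm Tang}\,(X|_S,Y|_S)$. But there are two genuine gaps, one of which removes exactly the case where the paper does its real work.

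The fatal one is your claim that $\theta=X|_S\wedge Y|_S$ is not identically zero ``because $X$ and $Y$ span the tangent planes of $\cald$ at generic points.'' That genericity is a statement about points of the three-dimensional ambient $U$; it says nothing about the two-dimensional analytic set $S$, which may be entirely contained in the degeneracy locus $\{X\wedge Y=0\}$. The case ${\rm Tang}\,(X|_S,Y|_S)=S$ is not vacuous --- it is precisely the case the paper isolates and spends most of its effort on: when it occurs and $S$ is smooth one must invoke Camacho--Sad for the single foliation induced on $S$, and when $S$ has an isolated singularity one must instead prove that the induced foliation on $S$ admits a non-constant holomorphic first integral, because foliations on \emph{singular} surfaces can fail to have separatrices (Camacho's examples, cited as \cite{C}). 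Your proposal silently excludes this branch, and the blow-up analysis you substitute for it addresses a different difficulty. Relatedly, the ``degenerate sub-case'' you flag at the end is left unresolved, so even on its own terms the argument is not complete.

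The second, lesser gap is the source of the invariant surface. You appeal to Theorem~\ref{teo_RR_separatrix}, which is stated only for commuting fields, and assert without proof that its argument extends to $[X,Y]=cY$. The paper avoids this heavy machinery altogether: it first disposes of the case where ${\rm Sing}\,(\cald)$ is a curve (that curve is then the desired $\mathcal{C}$), and in the remaining case ${\rm Sing}\,(\cald)$ has codimension at least~$3$, so Malgrange's theorem \cite{malgrange} yields a holomorphic first integral $f$ and one takes $S=f^{-1}(0)$. This is both softer and uniform in the two Lie-algebra hypotheses; I would recommend adopting it in place of your first step.
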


The theorem above deserves a few additional comments.

\begin{itemize}
\item First, the fact that Theorem~\ref{teo_RR_secondjet} applies to pair of vector fields generating the Lie algebra
of the affine group is in stark contrast with the analogous problem for codimension~$1$ foliations. Indeed,
every homogeneous vector field of degree at least~$2$ together with the radial vector field
generate the Lie algebra of the affine group. In particular, Jouanolou's and similar examples of codimension~$1$ foliation
without separatrices arise from pairs of vector fields generating the Lie algebra of the affine group.

\item Whereas theorems asserting the existence of separatrices for foliations of dimension~$1$ holds interest
in their own right, they also have non-trivial applications in the general problem of understanding globally
defined holomorphic vector fields on compact complex manifolds, see Section~\ref{subsec:local_aspects}.
In particular,  the paper \cite{RR_secondjet} also includes a non-trivial applications of
Theorem~\ref{teo_RR_secondjet} in this direction.

\item Finally, a relatively minor but yet subtle issue that is worth pointing out is that
Theorem~\ref{teo_RR_secondjet} claims that $X$ and $Y$ possess a common invariant curve without {\it asserting
that the curve in question is invariant by the foliations associated with $X$ and $Y$}. To further clarify the
issue, it is enough to think of the $2$-dimensional vector field $x \partial /\partial x$: the axis $\{ x=0\}$ is
invariant by the vector field but does not constitute a separatrix for the associated foliation. In turn, it
might be asked if the foliations associated with
$X$ and $Y$ share an actual separatrix, possibly enlarging the notion of ``separatrix'' to include curves
fully constituted by singular points of the corresponding foliation. In particular, it is
easy to check that the existence of ``common separatrices'' always holds when $X$ is a homogeneous vector field
and $Y$ is the radial vector field. Indeed, in this case the
leaves of $\fol_Y$ are simply the radial lines. Concerning $\fol_X$,
since it is not a multiple of the Radial vector field, it induces a $1$-dimensional foliation on $\CP^2$ by means of the
one-point blow-up of $\C^3$ at the origin. The foliation in question possesses isolated singular points and it can easily be
checked that the radial line naturally associated with any of these singular points is invariant by $\fol_X$ as well. We
believe that the existence of a common separatrix for $\fol_X$ and $\fol_Y$ in the general case can also be established.
\end{itemize}

To finish the section, let us provide an outline of the proof of Theorem~\ref{teo_RR_secondjet}.

\begin{proof}[Sketch of Proof of Theorem~\ref{teo_RR_secondjet}]
Recall that the foliation associated with $X$ (resp. $Y$) is denoted by $\fol_X$ (resp. $\fol_Y$).
Let $\cald$ denote the codimension~$1$ foliation spanned by $X$ and $Y$. We have that ${\rm codim} \, ({\rm Sing} \,
(\cald)) \geq 2$. In other words, ${\rm Sing} \, (\cald)$ is of one of the following types: the union of a finite number of
irreducible curves, a single point (the origin), or simply empty (i.e. $\cald$ is regular). Since ${\rm Sing} \, (\cald)$ is
naturally invariant by $X$ and by $Y$, the result immediately holds if ${\rm dim} \, ({\rm Sing} \, (\cald))=1$. Hence we can
assume without loss of generality that ${\rm Sing} \, (\cald)$ has codimension at least~$3$. In other words,
either ${\rm Sing} \, (\cald)$ is reduced to the origin or it is, in fact, empty.

Since the singular set of $\cald$ has codimension at least~$3$, Malgrange Theorem \cite{malgrange} implies that
$\cald$ possesses a non-constant holomorphic
first integral $f$. Let then $S = f^{-1}(0)$ so that $S$ is an invariant surface for $\cald$, i.e. the irreducible
components of $S$ are separatrices for $\cald$. In particular, $S$ is invariant by both $X$ and $Y$. Next, note that
$S$ can be assumed to be irreducible. Otherwise, the intersection of any two irreducible components
of $S$ yields a curve invariant under both $X, \, Y$ and the conclusion holds.
The surface $S$ can then be assumed either regular or having an
isolated singularity at the origin (again if $S$ contains a curve of singular points this curve must be invariant
by $X$ and $Y$). At this point, a couple of remarks are in order:
\begin{itemize}
\item In the case where $S$ is smooth, each of the foliations $\fol_X$ and $\fol_Y$ possesses separatrices owing
to Camacho-Sad Theorem \cite{camachosad}. Still it remains to check that these foliations share a common separatrix.

\item As previously mentioned, in the case of singular surfaces, there are examples of foliations without separatrix
(cf.~\cite{C} or \cite{GM-L}). This phenomenon needs thus to be ruled out in the present case.
\end{itemize}

In general, we proceed as follows.
Consider the restrictions of $X$ and $Y$ to $S$ along with the corresponding tangency locus. This tangency locus
is not empty since both $X$ and $Y$ vanish at the origin.
Since the tangency locus ${\rm Tang} \, (X|_S, Y|_S)$ is invariant by both $X$ and $Y$, the
result immediately holds in the case where its dimension equals 1. So, we shall consider separately the case where
${\rm Tang} \, (X|_S, Y|_S) = \{ (0,0,0)\}$ and the case where ${\rm Tang} \, (X|_S, Y|_S) = S$.

Assuming that ${\rm Tang} \, (X|_S, Y|_S)$ is reduced to the origin. Then $S$ is a surface with
singular set of codimension at least 2 and
equipped with two vector fields that are linearly independent away from this an analytic set of codimension~$2$
or greater. This implies that tangent sheaf to $S$ is locally trivial
which, in turn, implies that $S$ is smooth since $S$ is a hypersurface in $\C^3$.
However, being smooth, $S$ is locally equivalent to $\C^2$ and the tangency locus of two vector
fields cannot be reduced to a single point. The resulting contradiction rules out this case.

Assume now that ${\rm Tang} \, (X|_S, Y|_S) = S$, i.e. the restrictions to $S$ of
$X$ and $Y$ coincide up to a multiplicative function (defined on $S$). The
existence of the desired common separatrix is then ensured in the case where $S$ is smooth by
Camacho-Sad theorem. It only remains to consider the case where $S$ has an isolated singular point at
the origin. The argument in this case relies on proving that the ($1$-dimensional) foliation induced on $S$ by either
$X$ or $Y$ possesses a non-constant holomorphic first integral. The level curve of this first
integral containing the origin then
yields the desired separatrix. Details can be found in~\cite{RR_secondjet}.
\end{proof}


\section{Semicomplete vector fields, automorphism groups, and separatrices}\label{semicompletevectorfields}

The object of this last section is a distinguished class of singularities of vector fields, namely the
{\it semicomplete (singularities of) vector fields}. Understanding this class of vector fields, both at global level and at
level of germs, is a problem
with interesting applications. As an example of application, we will see in Section~\ref{subsec:local_aspects}
that results on singularities of semicomplete vector fields yield insight in some
problems about bounds for the dimension of automorphism group of compact complex manifolds.
Another motivation to study these vector fields and their singular points stems from the very
fact that the {\it semicomplete property}\, is somehow akin to the Painlev\'e property for differential equations,
albeit the two notions are not equivalent. As a matter of fact, as it happens
with Painlev\'e property, semicomplete vector fields are also largely present - sometimes implicitly - in the literature of
Mathematical Physics.

The notion of {\it semicomplete singularity}\, was introduced in~\cite{Rebelo96}. The idea begins with the definition of
semicomplete vector fields on general open sets which is as follows.

\begin{defi}\cite{Rebelo96}\label{definition_semicompletevf}
A holomorphic vector field~$X$ defined on an open set $U$ of some complex manifold $M$ is said to be semicomplete (on $U$)
if for every~$p\in U$ there exists a connected domain~$V_p
\subset \C$, with~$0\in V_p$, and a map~$\phi_p : V_p \to U$ satisfying the following conditions:
\begin{itemize}
\item $\phi_p(0)=p$
\item $\phi'_p(T) = X(\phi_p(T))$, for every $T \in V_p$.
\item For every sequence~$\{T_i\}\subset V_p$ such that~$\lim_{i\rightarrow\infty} T_i = \hat{T} \in \partial V_p$ the sequence
$\{\phi_p(T_i)\}$ escapes from every compact subset of~$U$.
\end{itemize}
\end{defi}

The third condition in Definition~\ref{definition_semicompletevf} basically means that
$\phi_p : V_p \to U$ is a maximal solution of $X$ in a sense similar to the notion of ``maximal solutions'' commonly
used for real vector field and/or differential equations. In this sense, the definition is equivalent to saying that
a vector field is semicomplete if for every $p \in U$ the integral curve $\phi$ satisfying $\phi (0) =p$
has a {\it maximal domain of definition in $\C$}. Closely connected to the notion of
maximal domain of definition in $\C$, we can think of a local integral curve for a vector field $X$ and then
extending it over paths which is always possible as long as we stay in the domain of definition of $X$.
The vector field $X$ is then semicomplete if these extensions do not give rise to any monodromy and hence can
be merged together in a single (univalued) solution for $X$ which is naturally defined on a maximal domain
in $\C$.

Though global in essence, the above definition has also a {\it local character}\, that is singled out by
the following assertion: {\it if a vector field $X$ is semicomplete on $U$, then the restriction of $X$
to every subset $V$ of $U$ is semicomplete as well}. Thus the notion of {\it germ of semicomplete vector field},
and hence of {\it semicomplete singularity}, makes sense.
Furthermore, even at level of germs, the condition of being {\it semicomplete}\, is far from trivial and, in fact,
imposes strong constraints on the singular points of vector fields as pointed out in~\cite{Rebelo96}.
As a matter of fact, since its introduction, semicomplete singularities have proved time and again that they
capture almost all of the ``intrinsic nature'' of germs of vector fields admitting
actual global realizations as complete vector fields.

Germs of holomorphic semicomplete vector fields on $(\C^2,0)$ were classified by Ghys and Rebelo in the
papers~\cite{Rebelo96} and~\cite{Ghys-R}. In particular,
all these vector fields admit a non-constant holomorphic/meromorphic
first integral so that the dynamics associated with them is rather simple.

After this brief introduction to semicomplete vector fields, the remainder of the section will focus on two fundamental
questions related to them. The first question was somehow motivated by the results of Ghys and Rebelo in dimension~$2$ and asks the extent
to which the condition of being semicomplete may tame the {\it core dynamics}\, of
the corresponding foliation. In other words, we ask:

\vspace{0.1cm}

$\bullet$ Are there semicomplete vector fields exhibiting a genuinely complicated
core dynamics?

\vspace{0.1cm}

\noindent The second question was raised by E. Ghys long ago and, roughly speaking, involves deciding ``how degenerate''
can semicomplete singular points be. A prototypical question along these lines concerns semicomplete vector fields
with isolated singular points and can be formulated as follows:

\vspace{0.1cm}

$\bullet$ Is it true that the second jet of a semicomplete vector field at an isolated singular point is necessarily different
from zero?

\vspace{0.1cm}

\noindent This question is affirmatively answered in dimension~$2$ in the mentioned works by Ghys and Rebelo.
It remains open in higher dimension, though a number of partial results
are available in dimension~$3$.

Whereas the interest in ``taming'' the core dynamics associated to singularities of vector fields has already been emphasized, let us also point
out that the general question raised by E. Ghys has
applications to problems about bounds for the dimension of automorphism group of compact complex manifolds. This issue
will further be discussed
in Section~\ref{subsec:local_aspects}. For the time being, we will focus on the dynamics associated with
semicomplete singularities.


\subsection{Semicomplete vector fields with complicated dynamics - Guillot's work~\cite{guillot-IHES} }\label{subsec:sc_Halphen}

As previously mentioned, singularities of semicomplete vector fields have very simple dynamics in complex dimension~$2$.
In fact, even the global behavior of
semicomplete vector fields is amenable to detailed analysis, see \cite{Guillot-Rebelo}, \cite{guillotadvances}. However,
this is no longer the case in dimension~$3$ as follows from Guillot's deep work on
Halphen vector fields. This paragraph is basically devoted to summarizing the main dynamical issues appearing
in semicomplete Halphen vector fields while referring to~\cite{guillot-IHES} for a more comprehensive discussion.

Halphen vector fields were first considered by Halphen himself \cite{halphen-1}, \cite{halphen-2}. Apart from his contribution,
let us make clear that {\it all remaining results in this paragraph are due to Guillot and can be found in~\cite{guillot-IHES}}.
Up to linear equivalence, Halphen vector fields form a three parameters family of homogeneous polynomial vector fields
of degree~$2$ on $\C^3$ explicitly described as
\begin{align}\label{Halphenformula}
X = &\left[ \alpha_1 z_1^2 + (1-\alpha_1) (z_1 z_2 + z_1 z_3 - z_2 z_3) \right] \frac{\partial}{\partial z_1} + \\
&\left[ \alpha_2 z_2^2 + (1-\alpha_2) (z_1 z_2 - z_1 z_3 + z_2 z_3) \right] \frac{\partial}{\partial z_2} + \nonumber \\
&\left[ \alpha_3 z_3^2 + (1-\alpha_3) (-z_1 z_2 + z_1 z_3 + z_2 z_3) \right] \frac{\partial}{\partial z_3} \nonumber \\
\nonumber \end{align}
An alternate definition pointed out in~\cite{guillot-IHES} which already sheds some light in the intrinsic nature of these
vector fields is as follows.

\begin{defi}
A homogeneous polynomial vector field of degree~$2$ (a quadratic vector field for short) on $\C^3$ is
Halphen if it satisfies the following relation
\begin{equation}
[C,X] = 2R \, , \label{equation_definition-HalphenVF}
\end{equation}
where $C$ stands for a constant vector field and $R$ is the Radial vector field.
\end{defi}

The normal form indicated in~(\ref{Halphenformula}) is obtained as the solutions
of Equation~(\ref{equation_definition-HalphenVF}) for
$C = \partial /\partial z_1 + \partial /\partial z_2 + \partial /\partial z_3$.
Since both $C$ and $X$ are
homogeneous, Euler relations imply that we also have $[R,C] = -C$ and $[R,X] = X$. In turn, these
three relations together mean that the triplet
$\{R, \, C, \, X\}$ generates the Lie algebra of ${\rm SL} \, (2,\C)$.


Let $\fol_X$, $\fol_R$, and $\fol_C$ denote the $1$-dimensional foliations associated to the vector fields $X$, $R$ and $C$, respectively.
Once again, let $\widetilde{\C}^3$ denote the blow-up of $\C^3$ centered at the origin with projection
$\pi : \widetilde{\C}^3 \rightarrow \C^3$. The exceptional divisor $\pi^{-1} (0)$ is isomorphic to $\CP^2$ and the blow ups
of $\fol_X$, $\fol_R$, and $\fol_C$ will respectively be denoted by $\tilf_X$, $\tilf_R$, and $\tilf_C$.
Similarly, $\widetilde{X}$, $\widetilde{R}$, and $\widetilde{C}$ will stand for the blow ups of $X$, $R$, and $C$.
Next, recall that, whenever two vector fields commute, then the flow of one of them will preserve the foliation associated with the other.
This simple remark hints at a
basic property of Halphen vector fields. Indeed, since $X$ and $C$ commute up to the Radial vector field, the flow of $X$ ``tends''
to preserve the projection of the foliation arising from $C$ along the orbits of $R$. To make this remark accurate, we first note
that the space of orbits of $R$ is naturally identified with the exceptional divisor $\pi^{-1} (0) \simeq \CP^2$ though, on $\pi^{-1} (0)$,
$\widetilde{X}$ vanishes identically and $\widetilde{C}$ has poles. However, the restrictions $\tilf_{X \vert \pi^{-1} (0)}$
and $\tilf_{C \vert \pi^{-1} (0)}$ to $\pi^{-1} (0)$ of the foliations $\tilf_X$ and $\tilf_C$ have a specific property of
``mutual transversality'' which is reminiscent from the previous observation on commuting vector fields. This can be stated as follows:

\begin{defi}
Two (singular) foliations $\fol_1$ and $\fol_2$ are said to be mutually transverse if they are (regular and) transverse away from
an algebraic curve $C$ which, in addition, is invariant by both $\fol_1$ and $\fol_2$. In particular, the curve $C$ contains all
singular points of $\fol_1$ and of $\fol_2$.
\end{defi}

Keeping in mind that
$\tilf_{C \vert \pi^{-1} (0)}$ is nothing but a pencil of projective lines, the ``mutual transversality'' condition makes it
easy to work out the structure of $\tilf_{X \vert \pi^{-1} (0)}$ directly on $\pi^{-1} (0)
\simeq \CP^2$. Namely, we have:
\begin{itemize}
\item Generically, $\tilf_{X \vert \pi^{-1} (0)}$ leaves exactly $3$ projective lines $C_1$, $C_2$ and $C_3$ invariant.
These projective lines belong to the pencil $\tilf_{C \vert \pi^{-1} (0)}$ and they
intersect mutually at a radial singularity in $\pi^{-1} (0)$ (the base locus of the pencil) which is given in homogeneous
coordinates by $[1,1,1]$.
Also, the eigenvalues of $\tilf_{X \vert \pi^{-1} (0)}$ at $[1,1,1]$ are $1$ and $1$ (radial singularity).

\item In fact, $[1,1,1]$ is a radial singularity for the foliation in the $3$-dimensional space. In other words, the eigenvalue
of $\tilf_X$ at $[1,1,1]$ associated to the direction transverse to the exceptional divisor is again~$1$.

\item Away from the invariant projective lines $C_1$, $C_2$, and $C_3$, the foliation $\tilf_{X \vert \pi^{-1} (0)}$
is transverse to the remaining projective lines in the pencil $\tilf_{C \vert \pi^{-1} (0)}$.

\end{itemize}
Next, since $X$ is homogeneous, the dynamics of the foliation $\tilf_X$ on $\widetilde{\C}^3$ can basically be recovered
from the dynamics of the {\it core foliation}\, $\tilf_{X \vert \pi^{-1} (0)}$ on $\pi^{-1} (0) \simeq \CP^2$. We will return to this
point later.

%


In view of the preceding, let us first focus on the core foliation $\tilf_{X \vert \pi^{-1} (0)}$. Note that both
$\tilf_{X \vert \pi^{-1} (0)}$ and the pencil $\tilf_{C \vert \pi^{-1} (0)}$, the latter viewed as foliation, share
the singular point $[1,1,1] \in \pi^{-1} (0)$.
Consider the ($2$-dimensional) blow up of $\pi^{-1} (0) \simeq \CP^2$ at~$[1,1,1]$. The resulting surface
is the Hirzebruch surface $F_1$, the $\CP^1$-bundle over $\CP^1$ with a section of self-intersection~$-1$. On $F_1$,
the foliation (pencil) $\tilf_{C \vert \pi^{-1} (0)}$ becomes the standard fibration ${\rm P} : \, F_1 \rightarrow \CP^1$.
In turn, the transform $\fol_{X, F_1}$ of the foliation $\tilf_{X \vert \pi^{-1} (0)}$ on $F_1$ is regular on a neighborhood
of the $-1$-rational curve of $F_1$ (identified with the exceptional divisor $\pi^{-1} (0)$ of the blow up of $\CP^2$).
Also, there are $3$ fibers of ${\rm P}$ that are invariant by $\fol_{X, F_1}$ and these fibers will still be denoted by
$C_1$, $C_2$, and $C_3$ by evident reasons. Away from these three fibers, $\fol_{X, F_1}$ is regular and transverse to the
fibration induced by ${\rm P}$ on the open manifold $F_1 \setminus \{ C_1,C_2,C_3\}$.

The dynamics of $\tilf_{X \vert \pi^{-1} (0)}$ can naturally be read off the dynamics of $\fol_{X, F_1}$ which, in turn, is
essentially described by the {\it holonomy representation}. In fact, the restriction of $\fol_{X, F_1}$ to
the open surface $(F_1 \setminus \{ C_1,C_2,C_3\})$
is transverse to the restriction to $(F_1 \setminus \{ C_1,C_2,C_3\})$ of the fibration ${\rm P} : \, F_1 \rightarrow \CP^1$. Since the fibers of
${\rm P}$ are compact, Ehresmann's observation ensures that the restriction of ${\rm P}$ to the leaves of $\fol_{X, F_1}$
yields a covering map from the leaf in question to $\CP^1 \setminus \{z_1,z_2,z_3\}$, where $z_1, z_2, z_3$ are in natural
correspondence with the invariant fibers $C_1,C_2,C_3$. The dynamics of $\fol_{X, F_1}$ is therefore essentially encoded
in the holonomy representation, namely: the homomorphism $\rho$ from the fundamental group of $\CP^1 \setminus \{z_1,z_2,z_3\}$
to the group of automorphisms of the fiber of ${\rm P}$ arising from parallel transport along leaves of $\fol_{X, F_1}$.

%
%
%
%

Let $\pi_1 (\CP^1 \setminus \{z_1,z_2,z_3\})$ denote the fundamental group of $\CP^1 \setminus \{z_1,z_2,z_3\}$. Since
$\fol_{X, F_1}$ is holomorphic, the image of the holonomy representation $\rho$ is contained in the group of holomorphic diffeomorphisms
of $\CP^1$ which can be identified with $\psl$. The {\it holonomy group} $\Gamma$ of $\fol_{X, F_1}$ is the image of
$\pi_1 (\CP^1 \setminus \{z_1,z_2,z_3\})$ by $\rho$, i.e. $\Gamma \subset \psl$ is defined by
$\Gamma = \rho [\pi_1 (\CP^1 \setminus \{z_1,z_2,z_3\})]$.

Next, for each $i=1,2,3$, let $\xi_i \in \psl$ be the holonomy map obtained by lifting a small loop around $z_i \in \CP^1$
in the leaves of $\fol_{X, F_1}$. The M\"oebious transformations $\xi_1, \xi_2, \xi_3$ clearly generate the holonomy
group $\Gamma$ and satisfy the relation $\xi_1 \, \xi_2 \, \xi_3 = {\rm id}$. With the evident identifications,
the dynamics of $\Gamma$ on $\CP^1$ also accounts for the global dynamics of the foliation $\tilf_{X \vert \pi^{-1} (0)}$ on
$\CP^2$.

All of the preceding considerations apply to every Halphen vector field in the family defined by~(\ref{Halphenformula}), regardless
of whether or not they are semicomplete. To detect semicomplete Halphen vector fields in the family~(\ref{Halphenformula}),
we proceed as follows. First, notice that the singularities of $\tilf_X$ and of $\tilf_{X \vert \pi^{-1} (0)}$ do coincide.
Naturally there is the point $[1,1,1]$ lying at the intersection of all the lines in the pencil
$\tilf_{C \vert \pi^{-1} (0)}$. Moreover, around $[1,1,1]$, the foliation $\tilf_X$ is conjugate to the radial vector field in dimension~$3$.

To describe the structure of the remaining singular points, for
$i=1,2,3$, let $m_i = (\alpha_1 + \alpha_2 + \alpha_3 - 2)/\alpha_i$ provided that $\alpha_i \neq 0$,
and set $m_i = \infty$ otherwise. The remaining singular points of $\tilf_X$ are contained in the lines $C_1,C_2,C_3$ and
are as follows.
\begin{itemize}
	\item[(1)] If $m_i \neq \infty$. Then, aside from $[1,1,1]$, $\tilf_X$ possesses exactly two singular points $p_i$ and $q_i$
	in the line $C_i$. The eigenvalues of $\tilf_X$ at $p_i$ are $-1,-1,-m_i$ while at $q_i$ the eigenvalues are
	$-1,-1,m_i$. In both cases, the eigenvalues are ordered so that to the first eigenvalue corresponds to a direction transverse
	to the exceptional divisor, the second eigenvalue is associated with the direction of $C_i$ and the third eigenvalue is associated
	with a direction transverse to $C_i$ and contained in the exceptional divisor.

	\item[(2)] If $m_i = \infty$. Then, aside from $[1,1,1]$, $\tilf_X$ possesses a unique singular point $p_i=q_i$ in $C_i$.
	At this singular point, the eigenvalues are $-1,0,-1$ with the same ordering used in the above item.
\end{itemize}

When $m_i = \infty$, the holonomy map $\xi_i$ is a parabolic map in $\psl$ since $\tilf_{X \vert \pi^{-1} (0)}$ has a ($2$-dimensional)
saddle-node singularity at $p_i=q_i$ with strong invariant manifold transverse to $C_i$. Next, we have:

\begin{prop}\label{necessary_condition-Halphen}
Assume that $X$ is semicomplete and that $m_i \neq \infty$. Then $m_i$ is an integer (which can be assumed positive up to
reversing the roles of $p_i$ and $q_i$).
Moreover the holonomy map $\xi_i: \C\p(1) \to \C\p(1)$ is periodic of period $m_i$.
\end{prop}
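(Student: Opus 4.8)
The plan is to reduce the statement to the single claim that $m_i\in\Z$, and then to extract that claim from semicompleteness via the monodromy of the time parameter. First I would determine the multiplier of $\xi_i$ at one of its fixed points. On $F_1$ the pencil $\tilf_{C\vert\pi^{-1}(0)}$ becomes the fibration ${\rm P}\colon F_1\to\CP^1$, and $\xi_i$ is the automorphism of a fibre obtained by transporting $\fol_{X,F_1}$ once around $z_i={\rm P}(C_i)$. In local coordinates $(v,w)$ on $F_1$ centred at $q_i$, with $C_i=\{w=0\}$ and ${\rm P}$ equal to the coordinate $w$ (so that $z_i=0$), the foliation $\fol_{X,F_1}$ is represented near $q_i$ by a vector field with linear part $-v\,\partial/\partial v+m_i\,w\,\partial/\partial w$, the eigenvalue $-1$ being tangent to $C_i$ and $m_i$ transverse to it. Up to higher order terms a leaf reads $(v_0e^{-\tau},w_0e^{m_i\tau})$, so turning once around $z_i=0$ amounts to increasing $\tau$ by $2\pi i/m_i$, and the resulting point lies on the same fibre $\{w=w_0\}$ with its $v$-coordinate multiplied by $e^{-2\pi i/m_i}$. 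Thus $\xi_i$ has a fixed point, the one associated with $q_i$, at which the multiplier equals $e^{-2\pi i/m_i}$; being an element of $\psl$, $\xi_i$ is periodic if and only if $e^{-2\pi i/m_i}$ is a root of unity, i.e. if and only if $m_i\in\Q$, in which case its period is the (absolute value of the) numerator of $m_i$ in lowest terms, a quantity which equals $m_i$ precisely when $m_i\in\Z_{>0}$. Hence it suffices to prove that semicompleteness forces $m_i\in\Z$.

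The key step is to exploit semicompleteness at the singular point $q_i$. Since $X$ is semicomplete near the origin and homogeneous, its blow-up $\widetilde X$ at the origin is a holomorphic, semicomplete vector field on $\widetilde{\C}^3$ whose zero divisor is exactly the exceptional divisor $\pi^{-1}(0)$. Choose coordinates $(u,v,w)$ centred at $q_i$ in $\widetilde{\C}^3$ with $\pi^{-1}(0)=\{u=0\}$ and $C_i=\{u=w=0\}$, so that $\widetilde X=u\hat X$ where $\hat X$ represents $\tilf_X$ and, in these coordinates, has linear part $-u\,\partial/\partial u-v\,\partial/\partial v+m_i\,w\,\partial/\partial w$ at $q_i$ — precisely the ordering of the eigenvalues in item~(1) of the statement. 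Parametrise a leaf $L$ through a point with all three coordinates nonzero by the time $\tau$ of $\hat X$; up to higher order terms $L$ is the image of $\tau\mapsto(u_0e^{-\tau},v_0e^{-\tau},w_0e^{m_i\tau})$, and since $\widetilde X=u\hat X$ the time of $\widetilde X$ along $L$ is $T=u_0^{-1}(e^{\tau}-1)$. Hence $T$ presents $L$ as an exponential-type cover of $\C\setminus\{T_\ast\}$, $T_\ast=-u_0^{-1}$, whose deck transformation $\tau\mapsto\tau+2\pi i$ fixes $u$ and $v$ and multiplies $w$ by $e^{2\pi i m_i}$; moreover a small loop around $T_\ast$ lifts to $L$ as a loop along which all three coordinates keep constant modulus, hence one contained in a compact subset of $\widetilde{\C}^3$. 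Consequently the germ at $0$ of the maximal solution of $\widetilde X$ through a point of $L$, viewed as a function of $T$, has a well-defined monodromy around $T_\ast$ equal to this deck transformation, and semicompleteness — the requirement that the maximal solution be single valued — forces that monodromy to be trivial, that is $e^{2\pi i m_i}=1$ and $m_i\in\Z$.

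Since $m_i\neq\infty$ the singular points $p_i$ and $q_i$ are distinct, so $m_i\neq0$; up to interchanging $p_i$ and $q_i$, which replaces $m_i$ by $-m_i$, we may assume $m_i\in\Z_{>0}$. Then the multiplier $e^{-2\pi i/m_i}$ of $\xi_i$ is a primitive $m_i$-th root of unity, and a Möbius transformation having a fixed point at which the multiplier is a primitive $m_i$-th root of unity (so $\ne1$ when $m_i\geq2$) is conjugate in $\psl$ to $v\mapsto e^{-2\pi i/m_i}v$, hence has order exactly $m_i$; for $m_i=1$ the monodromy computed above is trivial, so $\xi_i=\mathrm{id}$. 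In either case $\xi_i$ is periodic of period $m_i$, as claimed.

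The main obstacle is the control of the higher order terms of $\hat X$ near $q_i$, used tacitly above in replacing $\hat X$ by its linear part. Resonances among the eigenvalues $\{-1,-1,m_i\}$ occur precisely when $m_i\in\Q$, which is the range where the conclusion is not vacuous, so a plain linearisation is unavailable; the derivation of $m_i\in\Z$ must instead be recast either through the Poincar\'e--Dulac normal form together with the constraints semicompleteness imposes on it (in particular forcing the linear part at $q_i$ to be semisimple, since a Jordan block already destroys single-valuedness), or, more robustly, through the periods of the time form $dT$ along the loop around $T_\ast$, which are insensitive to that normal form; the relevant local technology for semicomplete singularities is available in \cite{Rebelo96}, \cite{Ghys-R} and \cite{guillot-IHES}. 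The remaining points are routine: that blowing up the invariant point $0$ preserves semicompleteness, that solutions of $\widetilde X$ stay on the leaves of $\tilf_X$ (which legitimises the leafwise computation), and the treatment of the degenerate values — $m_i$ purely imaginary, where one argues instead at the singular point whose transverse eigenvalue has positive real part, and $m_i=0$, where $\xi_i$ would be parabolic and is ruled out by semicompleteness separately, just as the case $m_i=\infty$ is handled.
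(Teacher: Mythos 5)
Your proposal follows essentially the same route as the paper: semicompleteness is exploited at $q_i$ through the monodromy acquired by the time of $\widetilde{X}$ upon turning once around the singular point, which forces $e^{2\pi i m_i}=1$, and the periodicity of $\xi_i$ then comes from its being a globally defined M\"obius transformation with an elliptic fixed point of multiplier $e^{-2\pi i/m_i}$. The obstacle you flag (higher-order terms and resonances at $q_i$) is closed in the paper exactly along the lines you suggest: one works on the separatrix $S$ transverse to the exceptional divisor, on which $\widetilde{X}$ restricts \emph{exactly} to $-z^2\,\partial/\partial z$ by homogeneity of the Halphen field, so the period of the time form along a loop in $S$ vanishes identically and semicompleteness forces the holonomy of $\tilf_X$ along that loop to be the identity; since the linear part of this holonomy is $(v,w)\mapsto(v,e^{-2\pi i m_i}w)$ independently of higher-order terms, $m_i\in\Z$ follows, and Theorem~1 of \cite{Reis06} then upgrades trivial holonomy to linearizability at $q_i$ -- which in particular settles the case $m_i=1$, where your multiplier computation alone does not exclude $\xi_i$ being parabolic.
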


\begin{proof}
Again, up to renaming $p_i$ and $q_i$, the singular point $q_i$ of $\tilf_X$ lies in the Siegel domain
and the eigenvalues of the mentioned foliation at the
singular point in question fulfill the conditions 1., 2., 3. and 4. of Theorem~$1$ in~\cite{Reis06} (or, equivalently, Theorem~$2.19$
in~\cite{lectureNotes_RR}). Furthermore, with the language of \cite{Reis06}, \cite{lectureNotes_RR},
the eigenvalue that can be ``separated'' from the others by
a straight line through $0 \in \C$ is the first eigenvalue (i.e. the eigenvalue associated with direction transverse to the
exceptional divisor). Consider then the separatrix
$S$ of $\tilf_X$ tangent to this direction. It is immediate to check that the restriction of $\widetilde{X}$ to $S$ is given,
in local coordinates, by $-z^2 \partial /\partial z$. Being $X$ semicomplete, there follows that the local holonomy map
of $\tilf$ arising from a small loop in $S$ encircling $q_i$ must agree with the identity, c.f. \cite{Ghys-R}.
Theorem~$1$ in~\cite{Reis06} then ensures that $\widetilde{\fol}_X$ is linearizable around $q_i$.

In particular, the foliation $\tilf_{X \vert \pi^{-1} (0)}$ is also linearizable around $q_i$. It follows that $\tilf_{X \vert \pi^{-1} (0)}$
possesses a separatrix transverse to $C_i$ and that the holonomy map arising from this separatrix is locally conjugate
to a rotation of angle $2\pi/m_i$. Because $\tilf_{X \vert \pi^{-1} (0)}$ is transverse to a fibration, this local holonomy map is, in fact,
the restriction of a global M\"oebius transformation $\xi_i \in \psl$ which, therefore, must verify $\xi_i^{m_i} = {\rm id}$.
\end{proof}

As an immediate consequence, we have the following

\begin{prop}\label{describingGamma_dimension2}
If $X$ is semicomplete, then the holonomy group $\Gamma \subset \psl$ describing the global dynamics of $\tilf_{X \vert \pi^{-1} (0)}$ is given by
\[
\Gamma = <\xi_1, \, \xi_2, \, \xi_3 : \, \, \xi_1^{m_1} = \xi_2^{m_2} = \xi_3^{m_3} = \xi_1 \xi_2 \xi_3 = {\rm id}> \, .
\]
In other words, $\Gamma$ is a triangular group.
\end{prop}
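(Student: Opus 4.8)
The plan is to identify $\Gamma$ first as a quotient of the abstract triangular group $\Delta(m_1,m_2,m_3)$ and then to prove that this quotient map is an isomorphism; the second step is where the hypothesis that $X$ is semicomplete is genuinely used, the first step being essentially formal once Proposition~\ref{necessary_condition-Halphen} is available.

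For the surjection, recall that $\pi_1(\CP^1 \setminus \{z_1,z_2,z_3\})$ is a free group of rank~$2$ admitting the presentation $\langle \xi_1,\xi_2,\xi_3 \mid \xi_1\xi_2\xi_3 = {\rm id}\rangle$, where $\xi_i$ is represented by a small loop around $z_i$. By the very definition of the holonomy group, $\Gamma = \rho[\pi_1(\CP^1 \setminus \{z_1,z_2,z_3\})]$ is generated by the images of $\xi_1,\xi_2,\xi_3$, and these satisfy $\xi_1\xi_2\xi_3 = {\rm id}$. Whenever $m_i \neq \infty$, Proposition~\ref{necessary_condition-Halphen} yields $\xi_i^{m_i}={\rm id}$, while for $m_i=\infty$ the map $\xi_i$ is parabolic and no finite power relation is imposed. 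Hence $\rho$ descends to a surjective homomorphism $\bar\rho : \Delta(m_1,m_2,m_3) \twoheadrightarrow \Gamma$, where $\Delta(m_1,m_2,m_3)$ denotes the group given by the presentation in the statement (with the relation $\xi_i^{m_i}={\rm id}$ omitted when $m_i=\infty$).

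It remains to prove that $\bar\rho$ is injective, i.e. that $\ker\rho$ coincides with the normal closure $N$ of $\{\xi_i^{m_i}: m_i<\infty\}$ in $\pi_1(\CP^1 \setminus \{z_1,z_2,z_3\})$; only the inclusion $\ker\rho \subseteq N$ is at stake. Here one must use more than the orders of the $\xi_i$: the proof of Proposition~\ref{necessary_condition-Halphen} shows that, $X$ being semicomplete, $\tilf_X$ is linearizable at \emph{each} of its singular points, hence so are $\tilf_{X \vert \pi^{-1} (0)}$ and, on the Hirzebruch surface $F_1$, the foliation $\fol_{X, F_1}$. Consequently the local holonomy of $\fol_{X, F_1}$ along each invariant fiber $C_i$ is conjugate to a rotation of exact order $m_i$ (respectively to a parabolic map when $m_i=\infty$), so the transverse structure of $\fol_{X, F_1}$ — which on $F_1 \setminus (C_1 \cup C_2 \cup C_3)$ is, by Ehresmann's lemma applied to the fibration ${\rm P}: F_1 \rightarrow \CP^1$ with compact fibers, a flat $\CP^1$-bundle with monodromy $\rho$ — extends across $C_1,C_2,C_3$ to a structure transverse to the orbifold fibration over the triangle orbifold $\CP^1(m_1,m_2,m_3)$. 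A leaf $L$ of $\fol_{X, F_1}$ distinct from the invariant fibers then compactifies to a smooth orbifold covering $\bar L \to \CP^1(m_1,m_2,m_3)$, so that $\ker\rho = \pi_1(L)$ contains $N$ and $\ker\rho/N$ is identified with the image of $\pi_1(L)$ in $\Delta(m_1,m_2,m_3)$; by Schwarz's theory (equivalently, the classical uniformization of triangle orbifolds, the developing map of the above structure being the Schwarz triangle map onto one of $\CP^1$, $\C$ or $\D$) the leaf $L$ is the universal orbifold cover, so this image is trivial, $\ker\rho = N$, and $\bar\rho$ is an isomorphism.

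The main obstacle is exactly the injectivity of $\bar\rho$: ruling out hidden relations cannot be done from the orders of the $\xi_i$ alone — for instance several hyperbolic triangular groups admit proper quotients into $\psl$ preserving the orders of the standard generators — and it really requires the linearizability of $\tilf_X$ at \emph{every} one of its singular points, which upgrades the transverse projective structure of $\fol_{X, F_1}$ to a genuine orbifold structure on $\CP^1(m_1,m_2,m_3)$ and makes the leaves coincide with the universal orbifold cover. A secondary point demanding care is the case analysis according to the sign of $1/m_1+1/m_2+1/m_3-1$: the parabolic cases $m_i=\infty$, where $C_i$ produces a cusp rather than a cone point and $\xi_i$ has infinite order, and the Euclidean case, where $\Delta(m_1,m_2,m_3)$ is itself an elementary (virtually abelian) subgroup of $\psl$ so that a naive rigidity argument fails and one must control the translational part directly from semicompleteness.
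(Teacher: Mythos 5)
Your first step (the relations $\xi_1\xi_2\xi_3=\mathrm{id}$ and $\xi_i^{m_i}=\mathrm{id}$, hence a surjection from the abstract triangle group onto $\Gamma$) is exactly how the paper reads the proposition: it is presented as an \emph{immediate} consequence of Proposition~\ref{necessary_condition-Halphen}, the exactness of the presentation together with the properties listed afterwards (uniqueness up to conjugation, discreteness, Fuchsian nature) being quoted as classical facts about triples of elliptic elements of $\psl$ with product the identity. The extra input you correctly sense is needed beyond the mere orders of the $\xi_i$ is already supplied by Proposition~\ref{necessary_condition-Halphen}: its proof shows each $\xi_i$ is locally conjugate to a rotation of angle exactly $2\pi/m_i$, not just an element of order $m_i$; this pins down the traces, and by the classical rigidity of such (irreducible) triples the representation is conjugate to the standard geometric realization of the triangle group, which is faithful and discrete. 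Your route to injectivity is therefore genuinely different from the paper's.

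The gap is in that injectivity step, at the assertion that the generic leaf is the universal orbifold cover of $\CP^1(m_1,m_2,m_3)$. The orbifold flat $\CP^1$-bundle structure alone cannot yield this: for the suspension of \emph{any} representation $\Delta(m_1,m_2,m_3)\to\psl$ whose generators have the correct orders, the leaves are the orbifold covers associated with the point stabilizers, and the generic leaf is the cover associated with $\ker\bar\rho$ -- so ``the generic leaf is the universal cover'' is literally equivalent to the faithfulness you are trying to establish, not an input to it. The appeal to ``Schwarz's theory'' presupposes a developing map of a projective structure on the base orbifold, and the transverse structure of $\fol_{X, F_1}$ does not furnish one; what does is Guillot's projective structure on $\CP^1\setminus\{z_1,z_2,z_3\}$ induced by the time-form of $X$ via the $\mathfrak{sl}(2,\C)$ relations, together with the Schwarzian computation identifying it with the uniformizing (hypergeometric) structure of the triangle orbifold. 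That is precisely the machinery of the \emph{sufficiency} argument sketched later in the section, so as written your proof is either circular or silently imports a much larger result. The spherical and Euclidean cases, which you flag but do not treat, are also handled uniformly by the rigidity route.
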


In Proposition~\ref{describingGamma_dimension2}, when $m_i = \infty$, the condition $\xi_i^{\infty} = {\rm id}$ must be interpreted
as simply saying that $\xi_i$ is parabolic. In the sequel, we shall also use the convention that $1/m_i =0$ provided that $m_i = \infty$.
In order to obtain semicomplete Halphen vector fields with complicate dynamics, we assume also that
\begin{equation}\label{hyperbolic_inequality}
m=\frac{1}{m_1} + \frac{1}{m_2} + \frac{1}{m_3} < 1 \, .
\end{equation}

The effect of inequality~(\ref{hyperbolic_inequality}) is just to rule out finitely many cases where the group $\Gamma$ is ``elementary'',
either finite or conjugate to a subgroup of the affine group of $\C$. Assuming $m_1,m_2,m_3$ fixed and as in~(\ref{hyperbolic_inequality}),
the resulting triangular group $\Gamma$ satisfy all of the following conditions:
\begin{itemize}
\item The group $\Gamma$ is unique (up to conjugation).
\item $\Gamma$ is discrete and non-elementary.
\item $\Gamma$ leaves a real projective line in $\CP^1$ invariant so that $\Gamma$ is actually a non-elementary Fuchsian group
(i.e. $\Gamma$ can also be viewed as a subgroup of ${\rm PSL} \, (2,\R)$).
\item The limit set $\Lambda (\Gamma)$ of $\Gamma$ coincides with the invariant circle $S^1$. In particular, $\Gamma$ acts properly
discontinuously on each connected component of $\CP^1 \setminus \Lambda (\Gamma)$.
\end{itemize}

As is well known, the dynamics of $\Gamma$ on its limit set $\Lambda (\Gamma) = S^1$ is very non-trivial: the dynamics has all orbits are dense
and it is ergodic with respect to the Lebesgue measure. Also stationary measures are unique (and hard to understand in detail). Clearly,
these issues are directly reflected in the saturated of $\Lambda (\Gamma)$ by the foliation $\tilf_{X \vert \pi^{-1} (0)}$ whose
dynamics is hence very non-trivial as well.

It is also convenient to say a few words on the actual dynamics of $\tilf_X$ on $\widetilde{\C}^3$ rather than limiting ourselves to
its core foliation $\tilf_{X \vert \pi^{-1} (0)}$. To describe this dynamics, we can follow essentially the same ideas used
to describe the foliation $\tilf_{X \vert \pi^{-1} (0)}$. Beginning with the pencil $\tilf_{C \vert \pi^{-1} (0)}$,
we define a family of surfaces in $\widetilde{\C}^3$ by considering the preimage of each line in $\tilf_{C \vert \pi^{-1} (0)}$
by the canonical projection $\Pi : \widetilde{\C}^3 \rightarrow \pi^{-1} (0) \simeq \CP^2$. More precisely, for every projective
line $D$ in the pencil $\tilf_{C \vert \pi^{-1} (0)}$, $\Pi^{-1} (D)$ is the line bundle over $\CP^1$ whose Chern class equals~$-1$.
Alternatively, by adding a ``section at infinity'', $\Pi^{-1} (D)$ can naturally be compactified into the Hirzebruch surface
$F_1$. In other words, up to adding a ``plane at infinity'' to $\widetilde{\C}^3$, we obtain a family of $F_1$ surfaces parameterized
by the lines in the pencil $\tilf_{C \vert \pi^{-1} (0)}$. Now, if we remove the three Hirzebruch surfaces sitting on the top of
the lines in $\tilf_{C \vert \pi^{-1} (0)}$ {\it that are invariant under $\tilf_{X \vert \pi^{-1} (0)}$}, it is straightforward
to conclude that $\tilf_X$ is {\it transverse to the fibration} by $F_1$-surfaces over $\CP^1 \setminus \{z_1,z_2,z_3\}$.
Thus, once again we obtain a representation $\overline{\rho}$ from the fundamental group of $\CP^1 \setminus \{z_1,z_2,z_3\}$ in the group
${\rm Aut}\, (F_1)$ of holomorphic diffeomorphisms of $F_1$. Let $\overline{\Gamma}$ be the image of $\overline{\rho}$, i.e.
the holonomy group of $\tilf_X$. Clearly, $\overline{\rho}$ is generated by the maps $\Xi_i$ obtained by lifting a small loop
around~$z_i$, $i=1,2,3$. The maps $\Xi_i$ can explicitly be computed. Fix a surface $F_1$ equipped with
coordinates $(x,w)$ where $x$ is projective coordinate on the projective line $\Pi (F_1)$ and $w$ is an affine coordinate on the fibers
of $F_1$ that equals zero in the intersection with the exceptional divisor. Then we have
\begin{equation}\label{definition_Xi-i}
\Xi_i (x,w) = (\xi_i (x) , \sqrt{\xi' (x)} \, w )
\end{equation}
c.f. \cite{RR-Iberomamericana}. Keeping in mind that the dynamics of $\tilf_X$ on $\widetilde{\C}^3$ and the dynamics of $\fol_X$
on $\C^3$ can be identified, what precedes can be summarized as follows (the slight abuse of language should not really lead to any
misunderstanding):

\begin{prop}\label{describingGamma_dimension3}
The dynamics of $\fol_X$ on $\C^3$ is essentially equivalent to the dynamics of the group $\overline{\Gamma} = \langle \Xi_1, \Xi_2,\Xi_3 \rangle$
on $F_1$. In particular, the $(-1)$-section of $F_1$ is invariant by $\overline{\Gamma}$ and the restriction of the action of
$\overline{\Gamma}$ to this section is nothing but the action of the triangular group $\Gamma$ on $\CP^1$.
\end{prop}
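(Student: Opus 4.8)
The plan is to trace through the construction already set up in this section and verify that the holonomy representation $\overline{\rho}$ of $\tilf_X$ along the $F_1$-fibration restricts, on the $(-1)$-section, to the representation $\rho$ describing the core foliation $\tilf_{X\vert\pi^{-1}(0)}$. First I would recall that the whole discussion rests on two transversality facts: $\tilf_X$ is transverse to the fibration by $F_1$-surfaces over $\CP^1\setminus\{z_1,z_2,z_3\}$, and $\tilf_{X\vert\pi^{-1}(0)}$ is transverse to the fibration $\mathrm{P}:F_1\to\CP^1$ over the same punctured line. Because the $F_1$-fibers are compact, Ehresmann's theorem applies in both cases and yields well-defined holonomy maps; the maps $\Xi_i$ (resp.\ $\xi_i$) generating $\overline{\Gamma}$ (resp.\ $\Gamma$) are obtained by lifting small loops around each $z_i$. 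The content of the proposition is then the compatibility of these two liftings, together with the explicit form~(\ref{definition_Xi-i}).

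The key steps, in order, are the following. (i) Identify $\widetilde{\C}^3$, minus the three invariant $F_1$-surfaces lying over $z_1,z_2,z_3$, as a fiber bundle over $\CP^1\setminus\{z_1,z_2,z_3\}$ with fiber $F_1$, and observe that the $(-1)$-section of each fiber assembles into a subbundle which is precisely $\pi^{-1}(0)\setminus\{C_1,C_2,C_3\}$ (with the point $[1,1,1]$ blown up), fibered over the same base by $\mathrm{P}$. (ii) Since the exceptional divisor $\pi^{-1}(0)$ is $\tilf_X$-invariant, parallel transport along leaves of $\tilf_X$ preserves this $(-1)$-subbundle; hence the holonomy diffeomorphism $\Xi_i\in\mathrm{Aut}(F_1)$ leaves the $(-1)$-curve invariant and its restriction there is the holonomy diffeomorphism of $\tilf_{X\vert\pi^{-1}(0)}$, i.e.\ $\xi_i$. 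This is exactly the first assertion of the proposition. (iii) To pin down the full $\Xi_i$, work in the coordinates $(x,w)$ on $F_1$, with $x$ projective on $\Pi(F_1)\simeq\CP^1$ and $w$ an affine fiber coordinate vanishing on the exceptional divisor; the fiber coordinate transforms as a section of a line bundle that is (after the blow-up and compactification) a square root of the cotangent bundle of $\CP^1$, so the holonomy acts on $w$ by multiplication by $\sqrt{\xi_i'(x)}$. This is the computation carried out in \cite{RR-Iberomamericana}, which I would cite for the normalization; it gives formula~(\ref{definition_Xi-i}). (iv) Conclude that $\overline{\Gamma}=\langle\Xi_1,\Xi_2,\Xi_3\rangle$ acts on $F_1$ with the stated restriction to the $(-1)$-section, and that (because the dynamics of $\tilf_X$ on $\widetilde{\C}^3$ is canonically identified with that of $\fol_X$ on $\C^3$ away from the origin, $X$ being homogeneous) this action captures the dynamics of $\fol_X$.

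The main obstacle I expect is step~(iii): making rigorous the claim that the fiberwise holonomy is the ``square-root of the derivative'' cocycle rather than some twist of it. The subtlety is that $\widetilde{\C}^3$ is the tautological bundle over $\CP^2$, whose restriction to a line $D$ in the pencil $\tilf_{C\vert\pi^{-1}(0)}$ has Chern class $-1$; after the $2$-dimensional blow-up at $[1,1,1]$ compactifying it into $F_1$, one must identify this line bundle intrinsically with the bundle on which the transverse holonomy naturally acts, and check that the monodromy of $\tilf_X$ respects this identification --- i.e.\ that the transverse infinitesimal structure of $\tilf_X$ along the $(-1)$-section is the one dual to $T\CP^1$. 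This is where the relation $[R,X]=X$ (equivalently $h_\lambda^\ast X=\lambda X$) enters: it forces the fiber direction to scale like $\lambda$ under homothety, which is precisely the weight of a section of $(T^\ast\CP^1)^{1/2}$ once one keeps track of how the base coordinate scales. Everything else --- the application of Ehresmann, the invariance of the exceptional divisor, the triangular-group structure inherited from Proposition~\ref{describingGamma_dimension2} --- is routine bookkeeping once this cocycle identification is in place.
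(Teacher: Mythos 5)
Your proposal is correct and follows essentially the same route as the paper: the paper states Proposition~\ref{describingGamma_dimension3} as a summary of the preceding discussion (transversality of $\tilf_X$ to the fibration by $F_1$-surfaces, the resulting holonomy representation $\overline{\rho}$ generated by the $\Xi_i$, and the explicit formula~(\ref{definition_Xi-i}) cited from \cite{RR-Iberomamericana}). You correctly isolate the only non-routine point --- the identification of the fiberwise cocycle with $\sqrt{\xi_i'(x)}$ via the tautological bundle being a square root of $T^{\ast}\CP^1$ and the homogeneity relation $h_{\lambda}^{\ast}X=\lambda X$ --- which the paper likewise delegates to the same reference.
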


By now, we have provided a description of the (rather non-trivial) dynamics of Halphen vector fields such that the quantities $m_i$ are integers
satisfying the condition in~(\ref{hyperbolic_inequality}) and the reader is
referred to \cite{guillot-IHES} for additional information. However, strictly speaking, we still {\it do not know}\, whether or not
Halphen vector fields satisfying the conditions in question are, indeed, semicomplete. In fact, Proposition~\ref{necessary_condition-Halphen}
provides only necessary conditions for the vector field to be semicomplete. Hence, there remains the problem of checking that these conditions
are also {\it sufficient}.

Curiously enough the fact that the corresponding Halphen vector fields are semicomplete is basically included in
Halphen original papers \cite{halphen-1}, \cite{halphen-2}. Halphen begins his Note by pointing out that,
if $\phi$ is a solution of a Halphen vector field, then so is
\begin{equation}\label{Halphen_equation_projectivestructure}
\widetilde{\phi} = \frac{1}{(ct + d)^2} \, \phi \left( \frac{at + b}{ct + d} \right) - \frac{c}{ct + d} \, ,
\end{equation}
for every $a, \, b, \, c, \, d \in \C$ with $ad - bc \ne 0$. From this he concludes that all solutions can be described out of
a single ``known'' solution. He then goes on to obtain a particular solutions by skillfully manipulating theta functions. In this
sense, the converse to Proposition~\ref{necessary_condition-Halphen} can be derived from his work.

Yet, Guillot \cite{guillot-IHES} provides a different proof of the semicomplete nature of Halphen vector fields
satisfying the conditions in in Proposition~\ref{necessary_condition-Halphen}. Guillot's argument
dispenses with the remarkable identities satisfied by theta functions and, perhaps more importantly, lends itself well
to deep generalizations. We will close this paragraph by sketching this argument.

First, it is convenient to recall the basic notions of {\it translation, affine, and projective
structures}\, on Riemann surfaces since they play a role in the discussion below.

\begin{defi}
Let $S$ be a Riemann surface along with a covering $\{ (B_i, \varphi_i)\}$ by local coordinates. The covering $\{ (B_i, \varphi_i)\}$
is said to define a translation structure (resp. affine structure, projective structure) on $S$ if and only if the changes of
coordinates $\varphi_i \circ \varphi_j^{-1} : \varphi_j (B_i \cap B_j) \rightarrow \varphi_i (B_i \cap B_j)$ are restrictions
of translations of $\C$ (resp. affine maps, M\"oebius transformations).
\end{defi}

In particular, if $S$ is endowed with a nowhere zero holomorphic vector field $X$, then the covering whose local coordinates
are the inverse maps of the (local) solutions of $X$ endows $S$ with a {\it translation structure}. This simple remark will
be useful below.

Also, a translation structure (resp. affine structure, projective structure) gives rise to a {\it monodromy homomorphism} $\rho$
from the fundamental group of $S$ to the group of translations of $\C$ (resp. affine maps, M\"oebius transformations). Following
\cite{guillot-IHES}, \cite{Guillot-Rebelo}, denote by $S_{\rho}$ the covering space of $S$ associated with the kernel of $\rho$.
On $S_{\rho}$, we can define a {\it developing map}\, $\mathcal{D}_{\rho} : S_{\rho} \rightarrow \C$
(resp. $\C$, $\CP^1$). In fact, $S_{\rho}$ is the {\it smallest covering of $S$ on which a developing map is well defined}. This
developing map will be called {\it the monodromy-developing map} of the corresponding structure. Naturally, all developing maps
are well defined up to composition with an element of the corresponding group (translation, affine map, or M\"oebius transformations).

\begin{remark}\label{yetanotherdefinitionsemicompleteness}
The preceding offers us yet another equivalent way to define semicomplete vector fields on a Riemann surface, and thus in general
since a vector field will be semicomplete if and only if its restriction to each leaf of its associated foliation is semicomplete.
Namely, the vector field $X$ on the Riemann surface $S$ is semicomplete if and only if the monodromy-developing map of the
corresponding translation structure is {\it injective}. We are now ready to explain Guillot's argument.
\end{remark}

\vspace{0.1cm}

\noindent {\it Sketch of Guillot's proof that Halphen vector fields as in Proposition~\ref{necessary_condition-Halphen}
are semicomplete}. We might start by recalling that the vector fields $R$ and $C$ generate the Lie algebra of the affine group
${\rm Aff}\, (\C,0)$. Of course a similar remark applies to their blow ups $\widetilde{R}$ and $\widetilde{C}$.
Then we consider the Zariski open subset $W$ of $\widetilde{\C}^3$ given as the complement of $\pi^{-1} (0)$ and of the $3$ invariant
Hirzebruch surfaces. In the setting of Proposition~\ref{describingGamma_dimension3}, $W$ is a $U$-bundle over
$\CP^1 \setminus \{ z_1,z_2,z_3\}$, where $U \subset F_1$ is the Zariski open set defined as the
complement of the two rational sections of $F_1$. In particular, $U$ is in a natural correspondence with an orbit of ${\rm Aff}\, (\C,0)$.
Finally, $\tilf_X$ is transverse to the fibers of the fibration $W \rightarrow \CP^1 \setminus \{ z_1,z_2,z_3\}$ and admits
a global holonomy group determined by Proposition~\ref{describingGamma_dimension3}.

It suffices to show that the restriction of $\widetilde{X}$ to $W$ is semicomplete. Guillot basic observation is that $\widetilde{X}$
induces a natural projective structure on $\CP^1 \setminus \{ z_1,z_2,z_3\}$ viewed as the base of the $U$-bundle $W$. This deserves
a few comments. Small discs $B \subset \CP^1 \setminus \{ z_1,z_2,z_3\}$ can be identified with discs on the leaves $L$
of $\tilf_X$ by means of the fiber bundle structure. Next, each leaf $L$ of the restriction of $\tilf_X$ to $W$ is endowed with
a translation structure induced by $\widetilde{X}$. These transverse structures vary with the leaf but its {\it underlining projective
structure does not}. In fact, taking into account that a fiber $U$ of the $U$-bundle $W$ is identified with an orbit of
${\it Aff}\, (\C)$ - and thus parameterized by the flows of $R$ and of $C$, Equation~(\ref{Halphen_equation_projectivestructure})
can be interpreted as an identity involving the flows of $R$, $C$, and $X$ (or of their blow ups which amounts to the same).
With this interpretation, it becomes clear that the time taken by $\widetilde{X}$ to move between two fixed fibers $U_1$ and $U_2$
along leaves $L$ and $L'$ are related by a M\"oebius transformation. Thus the covering of the base $\CP^1 \setminus \{ z_1,z_2,z_3\}$
obtained by taking the inverses of the local solutions of $X$, as above, over all possible leaves of $\tilf_X$ defines
a projective structure on $\CP^1 \setminus \{ z_1,z_2,z_3\}$.

Next, consider the monodromy-developing map $\mathcal{D}_{\rho}$ for the projective structure $\CP^1 \setminus \{ z_1,z_2,z_3\}$. It is straightforward
to check that {\it representatives}\, for this developing-map can be obtained by simply considering the monodromy-developing
maps associated with the {\it translation structures induced by $\widetilde{X}$}\, on the leaves of $\tilf_X$ (or more accurately
of the restriction of $\tilf_X$ to $W$). In view of Remark~\ref{yetanotherdefinitionsemicompleteness}, there follows
that $X$ is semicomplete if and only if $\mathcal{D}_{\rho}$ is injective. Guillot's then ``compute'' the projective structure
in question by means of the Schwarzian operator so as to show that $\mathcal{D}_{\rho}$ is essentially
Schwarz triangular functions and the proof follows.\qed

\begin{remark}\label{notbeingquotedIguess}
In fairness, we should note that the material covered in this paragraph is essentially the first part of Guillot's paper
\cite{guillot-IHES}. The content of \cite{guillot-IHES} also includes realizing semicomplete Halphen vector fields as actual
{\it complete}\, vector fields on complex manifolds as well as several important applications to the study of ${\rm SL}\, (2\C)$
actions and homogeneous spaces.
\end{remark}

Let us close this paragraph with a couple of questions about dynamics of semicomplete vector fields, the first one being kind
of inevitable.

\vspace{0.1cm}

{\bf Problem 1}. Are there semicomplete vector fields with complicated dynamics which genuinely different from the dynamics
	obtained by means of Halphen vector fields?

\vspace{0.1cm}

Another interesting question which may or may not have a saying in the above problem concerns geodesic flows on semisimple Lie groups.
These geodesic flows have already been considered in works by S. Dumitrescu and by Elshafei-Ferreira-Reis, see \cite{sorin}, \cite{Ahmed}
and their references.
Given a (semisimple) Lie group $G$ and a left-invariant holomorphic metric on $\langle \, . \, \rangle$ on $G$, the complex geodesic flow
on $G$ can be expressed by a quadratic vector field defined on the Lie algebra of $G$ by means of the Euler-Arnold formalism. This yields
a particular, yet large and with geometric nature, class of quadratic vector fields. Referring to vector fields
in this class as {\it Euler-Arnold}\, vector fields, their dynamics is definitely worth study. Thus we can formulate
the following special case of the preceding question which, however, holds interest in its own:

\vspace{0.1cm}

{\bf Problem 2}. Are there semicomplete Euler-Arnold vector fields exhibiting complicated dynamical behavior?

\vspace{0.1cm}


\subsection{Local aspects of semicomplete vector fields and applications}\label{subsec:local_aspects}

Partly, the interest of semicomplete vector fields comes from the fact that they provide local obstructions
for a germ of vector field be realized as singularity of a complete one. In the sequel, we will talk
about {\it germs of semicomplete vector fields} or about {\it semicomplete singularities}\, as synonymous.

From the basic properties discussed at the beginning of this section, it follows that semicomplete vector fields can be viewed
as a ``local counterpart'' of complete ones.
In fact, a singularity that {\it is not semicomplete}\, cannot be realized by a complete vector field. In particular, it cannot be
realized by a globally defined holomorphic vector field on a compact manifold. The understanding of semicomplete singularities
is therefore useful to the description of holomorphic vector fields (globally) defined on compact manifolds.

To better explain this issue, it is convenient to center the discussion around a rather concrete and well known question
due to E. Ghys that can be formulated in terms of semicomplete vector fields as follows:
{\it let $X$ be a semicomplete holomorphic vector field on $(\C^n,0)$ with isolated singular points. Is it
true that $J^2 X (0) \ne 0$, i.e. must the second jet of $X$ at the singular point be different from zero}\,?

Ghys' original motivation seems to be related to problems about bounds for the dimension of automorphism group of compact complex
manifolds. To be more precise, consider a compact complex manifold $M$ and denote by ${\rm Aut} \, (M)$ the group of holomorphic
diffeomorphisms of $M$. It is well known that ${\rm Aut} \, (M)$ is a finite dimensional complex Lie group whose Lie algebra can
be identified with $\mathfrak{X} \, (M)$, the space of all holomorphic vector fields defined on $M$. A too na\"{\i}ve question,
would be to wonder if the dimension of ${\rm Aut} \, (M)$ can be bounded by a function of the dimension of $M$. It turns out,
however, that the dimension of the automorphism group of the Hirzebruch surface $F_n$ is $n + 5$ provided that $n \geq 1$.
In particular, already in the case of compact surfaces, the dimension of ${\rm Aut} \, (M)$ can be arbitrarily large. However,
analogous questions can be raised to better effect for specific classes of manifolds. For example, among projective manifolds
with Picard group isomorphic to $\Z$, Hwuang and Mok
asked if there is a $n$-dimensional manifold whose dimension of the automorphism group exceeds the dimension
of the automorphism group of $\CP^n$.

As a matter of fact, Ghys' question is part of a general principle with vaguely stated as follows: semicomplete singularities cannot be
``too degenerate''. Here it is convenient to explain how {\it limiting the extent to which a semicomplete singularity can be degenerate}\, becomes
a useful tool to deal with the previous questions. Consider a $n$-dimensional compact complex manifold $M$ and let
${\rm Aut} \, (M)$ and $\mathfrak{X} \, (M)$ be as above. Fix a point $p \in M$ and let $k \in \N$ be given. Finally,
let $\mathfrak{X}_p^k(M)$ stand for the set of holomorphic
vector fields with vanishing $k$-jet at $p$ and denote by $J^k_p(M)$ the space of $k$-jets at~$p$. The natural mappings
\[
\mathfrak{X}_p^k \, (M) \to \mathfrak{X} \, (M) \to J^k_p(M) \, ,
\]
give rise to a short exact sequence so that we have
\[
{\rm dim} \, \mathfrak{X} \, (M) \leq {\rm dim} \, \mathfrak{X}_p^k \, (M) + {\rm dim} \, J^k_p(M) \, .
\]
The dimensions of the jet spaces $J^k_p(M)$ are explicitly given in terms of~$k$ and
of $n = {\rm dim} \, (M)$. In particular, if for some $p \in M$ and $k \in \N$, we can obtain
bounds for ${\rm dim} \, \mathfrak{X}_p^k \, (M)$ in terms of ${\rm dim} \, (M)$ then bounds for
${\rm dim} \, ({\rm Aut} \, (M))$ follow immediately.
For example, suppose that we happen to know that for a certain class of compact manifolds every singularity of a globally defined
holomorphic vector field is necessarily isolated.
Then, assuming Ghys conjecture holds, it follows that ${\rm dim} \, \mathfrak{X}_p^3 \, (M) = 0$ and therefore
the dimension of ${\rm Aut} \, (M)$ would be bounded by $(n^3 + 3n^2 +2n)/2$. Of course, in general, non-isolated singularities also
appear so that it is convenient to be able to handle them as well.

Aside from introducing the notion of semicomplete singularity, the content
of~\cite{Rebelo96} can fairly be summarized by the following theorem:

\begin{theorem}\cite{Rebelo96}\label{teo:scdimension2}
Let $X$ be a holomorphic semicomplete vector field on $(\C^2,0)$. If the origin is an isolated singular point for $X$, then $J^2_0 X \ne 0$.
\end{theorem}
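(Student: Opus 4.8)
The plan is to reach a contradiction from the assumption $J^2_0 X=0$ by restricting $X$ to an invariant curve and invoking the one‑dimensional picture. So suppose $J^2_0 X=0$; then the Taylor expansion of $X$ at the origin reads $X=X_k+X_{k+1}+\cdots$ with leading homogeneous component $X_k$ of degree $k\ge 3$, the order $k$ being finite since $X\not\equiv 0$ (otherwise $\mathrm{Sing}(X)=\C^2$ would not be isolated). Because the origin is an isolated singularity, $X$ is a local representative of a genuine singular foliation $\fol$ on a neighbourhood of $0\in\C^2$ with $\mathrm{Sing}(\fol)=\{0\}$. Hence Theorem~\ref{teo_CS} (Camacho--Sad) applies and produces a separatrix: an irreducible germ of analytic curve $S$ through $0$ invariant by $\fol$, and therefore by $X$. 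Note that $S\not\subset\mathrm{Sing}(X)=\{0\}$ since $\dim S=1$, so $X$ does not vanish identically on $S$.

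Next I would exploit the fact that semicompleteness is a local property that passes to invariant subvarieties and to their normalizations. Let $\varphi\colon(\C,0)\to(\C^2,0)$ be the Puiseux parametrization of $S$; it is a proper, generically one‑to‑one holomorphic map which restricts to a biholomorphism from $(\C,0)\setminus\{0\}$ onto $S\setminus\{0\}$. Since $X$ is tangent to $S$, there is a holomorphic function $g$ on $(\C,0)$, not identically zero, with $X(\varphi(t))=g(t)\,\varphi'(t)$, i.e. $\varphi^{\ast}X=g(t)\,\partial/\partial t$. The claim is that $\varphi^{\ast}X$ is semicomplete on $(\C,0)$: a maximal solution $\phi$ of $\varphi^{\ast}X$ through some $t_0\ne 0$ with $g(t_0)\ne 0$ maps under $\varphi$ to a non‑constant solution of $X$ inside $S\setminus\{0\}$; since a non‑constant trajectory of $X$ cannot reach the singular point $0$ in finite time and $\varphi$ is a local biholomorphism away from $0$, the domain of $\phi$ equals the maximal domain (in $\C$) of that solution of $X$, and the escape condition for $\phi$ follows from the escape condition for $X$ together with the properness of $\varphi$. (This is the standard restriction principle for semicomplete vector fields; cf.~\cite{Rebelo96} and the discussion of the local character of semicompleteness in Section~\ref{semicompletevectorfields}.)

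To finish I would combine two elementary estimates. First, a holomorphic vector field $g(t)\,\partial/\partial t$ on $(\C,0)$ with $g(0)=0$ and $g\not\equiv 0$ is semicomplete only if $\mathrm{ord}_0\,g\le 2$ — this is the classical one‑dimensional classification (see~\cite{Rebelo96}; equivalently, the model $t^n\partial/\partial t$ is semicomplete exactly for $n\le 2$, the obstruction for $n\ge 3$ being the branch point of the associated time map). Second, writing $m\ge 1$ for the multiplicity of $S$ at the origin, one has $\varphi(t)=t^{m}(\mathbf a_0+O(t))$ with $\mathbf a_0\ne 0$; each homogeneous component $X_j$ with $j\ge k$ contributes to $X(\varphi(t))$ a term of order $mj\ge mk$, while $\varphi'(t)$ has order $m-1$, so
\[
\mathrm{ord}_0\,g \;=\; \mathrm{ord}_0\,X(\varphi(t))-(m-1)\;\ge\; mk-(m-1)\;=\;m(k-1)+1\;\ge\;3 .
\]
This contradicts the semicompleteness of $\varphi^{\ast}X$ established above. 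Therefore $J^2_0 X\ne 0$.

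The argument is short, and the only substantial ingredient beyond routine bookkeeping is the existence of a separatrix. Consequently the "main obstacle" here is conceptual rather than computational: the proof is intrinsically two‑dimensional, relying on Theorem~\ref{teo_CS}, for which no analogue holds on $(\C^3,0)$ — separatrices may simply fail to exist — and this is precisely the reason the corresponding question of E. Ghys is still open in dimension $3$.
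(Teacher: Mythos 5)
Your proposal is correct and follows the same route the paper sketches for this theorem: produce a separatrix via Camacho--Sad, observe that the restriction of $X$ to it (pulled back by the Puiseux parametrization) is still semicomplete, and conclude from the one-dimensional fact that a semicomplete germ $g(t)\,\partial/\partial t$ with $g(0)=0$ has $\mathrm{ord}_0\,g\le 2$, which your order estimate $\mathrm{ord}_0\,g\ge m(k-1)+1\ge 3$ contradicts. The only difference is that you spell out the reduction and the bookkeeping that the paper leaves implicit.
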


The proof of Theorem~\ref{teo:scdimension2} relies on Camacho-Sad theorem on the existence of separatrices for foliations
on $(\C^2,0)$. Indeed, since the singular set of $X$ is reduced to the origin,
the restriction of $X$ to any analytic invariant curve going through the origin cannot vanish identically.
Furthermore, this restriction is still
a semicomplete vector field. Considering then the restriction of $X$ to a separatrix, whose
existence is ensured by Camacho-Sad theorem, the problem becomes essentially reduced to the one-dimensional
situation (whether or not the separatrix is smooth). The resulting (one-dimensional) problem is settled in the same paper by direct methods.

%

The question on whether or not Ghys conjecture holds for semicomplete vector fields in higher dimensions
is hence natural.
The first deep investigations involving semicomplete vector fields in
higher dimensions were conducted by A. Guillot
in~\cite{guillotFourier}, and \cite{guillot-IHES} (here ``higher'' means~$\geq 3$). The mentioned papers by
Guillot contain, in particular, numerous examples of quadratic semicomplete vector fields
exhibiting a wide range of geometric and dynamical behaviors.
Among these examples, we have already discussed the case of Halphen vector fields that have complicated dynamics and no (non-trivial)
holomorphic/meromorphic first integral (cf. Section~\ref{subsec:sc_Halphen}). Moreover, Guillot's work also make clear
that in dimensions~$\geq 3$, an exhaustive classification of all semicomplete vector fields with zero linear part - paralleling
the list provided in~\cite{Ghys-R} - is unlikely to exist or, at least, it would be too long to be truly useful.

This is therefore a good moment to elaborate on the difficulties in
extending to $(\C^3,0)$ the general classification results in dimension~$2$ of~\cite{Rebelo96},
\cite{Ghys-R}, not to mention the more general results of \cite{Guillot-Rebelo} encompassing also meromorphic vector fields.
Indeed, whether or not obtaining these generalizations is a tall order, it certainly seems useful to explicitly
list some of the new difficulties arising in dimensions greater than~$2$. Aside from the existence of
{\it core dynamics}, that is a general difficulty already emphasized in this work, the following issues are worth mentioning.

\smallbreak

\begin{itemize}

\item[1.] The basic approach to Ghys conjecture stemming from~\cite{Rebelo96} consists of finding a separatrix. Namely,
the following holds: let $X$ be a semicomplete (holomorphic) vector field on
$(\C^n,0)$ with an isolated singularity at the origin. If $X$ possesses a separatrix, then $J^2_0 X \ne 0$. However,
as previously seen, Gomez-Mont and Luengo \cite{GM-L} have proved that separatrices do not exist in general for germs of $1$-dimensional foliations
on $(\C^n,0)$, $n\geq 3$ (cf. Section~\ref{Allsortsofseparatrices}).

\smallbreak

\item[2.] The examples provided in \cite{GM-L}, however, are not semicomplete so that it is conceivable that all semicomplete vector field
possesses a separatrix. While this seems to suggest that Ghys conjecture may be proved by showing that semicomplete vector fields
do have separatrices, a direct approach to the latter question does not seem feasible.

\smallbreak

\item[3.] A more promising point of view regarding item~2 above consists of noticing that the detailed classification of semicomplete
vector fields in dimension~$2$, as developed in~\cite{Ghys-R} or in~\cite{Guillot-Rebelo}, dispenses with Camacho-Sad theorem. In fact,
these deeper analysis yield directly the classification. Hence the existence of separatrices for {\it semicomplete vector fields}\, on
$(\C^2,0)$ becomes a {\it corollary}, as opposed to a statement needed a priori.

\smallbreak

\item[4.] In dimension~$2$, a fundamental ingredient permeating virtually all works on singularities of vector fields or foliations is
the resolution theorem of Seidenberg \cite{seidenberg}. Since resolutions theorems for $1$-dimensional foliations have been established
in the past few years, c.f. Section~\ref{resolution_blowups}, this initial difficulty has now been overcome. In fact, as far
as semicomplete vector fields are concerned, a totally faithful analogue of Seidenberg's theorem is available in dimension~$3$
as will be seen below.

\smallbreak

\item[5.] Difficulties, however, are not limited to reduction of singularities procedures nor to the phenomenon of core dynamics.
For example, assume our objective is to establish Ghys conjecture by means of proving the existence of separatrices (in which case
the role played by core dynamics is significantly reduced, c.f. Section~\ref{separatrices_1dimension}). Assume, in addition, that we are
given a holomorphic foliation admitting a simple reduction of singularities. In dimension~$3$, the existence of saddle-node singularities
appearing in the resolution procedure cannot easily be ruled out. In particular, codimension~$2$ saddle-nodes
(i.e. with two eigenvalues equal to zero) may appear and these singularities are still poorly understood.

\end{itemize}

The remainder of this paper is to complement the above list with further comments and results, some of them proposing simpler
approaches that can be effective pending on the specific application targeted.

Concerning items~1 and~2, some partial results have been proved in \cite{RR_secondjet}. In fact, recall that Theorem~\ref{teo_RR_secondjet}
states that in the case we are given two holomorphic vector fields $X$ and $Y$ yielding a representation of a Lie algebra of dimension~2
and not everywhere parallel, then they possess a common separatrix. By elaborating on this theorem, the following weaker version
of Ghys conjecture in dimension~$3$ was proven in \cite{RR_secondjet}:

%

\begin{theorem}\cite{RR_secondjet}\label{teo_RR_nonvanishing_secondjet}
Consider a compact complex manifold $M$ of dimension~$3$ and assume that the dimension of ${\rm Aut}\, (M)$ is at least~$2$. Let $Z$ be an
element of $\mathfrak{X}\, (M)$ and suppose that $p \in M$ is an isolated singularity of~$Z$. Then
$$
J^2 (Z) \, (p) \neq 0 \, ,
$$
i.e. the second jet of $Z$ at the point $p$ is different from zero.
\end{theorem}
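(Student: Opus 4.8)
The plan is to reduce the assertion to the existence of a separatrix and then to apply the criterion of \cite{Rebelo96} recalled as item~1 above: a semicomplete holomorphic vector field on $(\C^n,0)$ with an isolated singularity at the origin \emph{that admits a separatrix} has nonzero second jet there. Since $M$ is compact, $Z$ is complete, hence its germ at $p$ is semicomplete, and $p$ is an isolated singularity of $Z$ by hypothesis; so it suffices to exhibit a germ of analytic curve through $p$ invariant under $Z$. The hypothesis $\dim\operatorname{Aut}(M)\ge 2$, i.e. $\dim\mathfrak{X}(M)\ge 2$, will be used to produce a second symmetry and then feed it into Theorem~\ref{teo_RR_secondjet}.

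First I would record the elementary Lie-algebraic fact that every nonzero element of a finite dimensional complex Lie algebra of dimension $\ge 2$ lies in a $2$-dimensional subalgebra: if $\operatorname{ad} Z$ has a nonzero eigenvalue, a corresponding eigenvector $W$ gives $[Z,W]=cW$ with $c\neq 0$; and if $\operatorname{ad} Z$ is nilpotent one finds $W\notin\C Z$ with $[Z,W]\in\C Z$, so that $\langle Z,W\rangle$ is again a subalgebra. Since $Z$ is nonzero (it has an isolated zero), this produces $W\in\mathfrak{X}(M)$ with $\mathfrak{h}:=\langle Z,W\rangle$ abelian or affine; concretely $[Z,W]=0$, or $[Z,W]=cW$, or $[W,Z]=cZ$ for some $c\in\C^\ast$, which is precisely the range of relations allowed in Theorem~\ref{teo_RR_secondjet}.

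Next I would distinguish cases according to $W$ near $p$. If $W(p)\neq 0$, flow-box $W$ to coordinates $(x,y,z)$ centred at $p$ with $W=\partial/\partial x$; writing $Z=a\,\partial/\partial x+b\,\partial/\partial y+c\,\partial/\partial z$, the relation defining $\mathfrak{h}$ pins down the $x$-dependence of $a,b,c$. In the abelian case these coefficients are independent of $x$, so $Z$ vanishes along the whole $x$-axis, contradicting the isolatedness of $p$; the same contradiction occurs in the affine case when $Z$ spans the derived ideal. In the remaining affine case one finds that the $x$-axis is $Z$-invariant and $Z$ restricts to a nonzero multiple of $x\,\partial/\partial x$ there — a smooth separatrix. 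If instead $W(p)=0$ and $Z,W$ are not everywhere proportional near $p$, then $Z,W$ are holomorphic vector fields on $(\C^3,0)$ vanishing at the origin, independent at generic points, and satisfying one of the relations above, so Theorem~\ref{teo_RR_secondjet} furnishes a germ of analytic curve $\mathcal C\ni p$ invariant under both; as $\operatorname{Sing}(Z)$ is locally just $\{p\}$, $Z|_{\mathcal C}\not\equiv 0$, so $\mathcal C$ is a genuine separatrix of $Z$.

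The remaining case is $W(p)=0$ with $Z$ and $W$ everywhere proportional near $p$. Since $\operatorname{Sing}(Z)$ has codimension $\ge 2$, the ratio $g=W/Z$ extends holomorphically across $p$; replacing $W$ by $W-g(p)Z$ we may take $g(p)=0$, and $[Z,W]\in\mathfrak{h}$ together with evaluation at $p$ forces $Z(g)=\beta g$ for a constant $\beta$. Hence $S=\{g=0\}$ is a germ of $Z$-invariant surface through $p$, and $Z|_S$ is semicomplete on $S$ with an isolated zero at $p$. If $S$ is smooth at $p$, Camacho--Sad (Theorem~\ref{teo_CS}) applied to $Z|_S$ yields a separatrix of $Z$ inside $S$. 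If $S$ is singular at $p$, one must show that a semicomplete vector field on a singular surface germ, with isolated zero at the singular point, still admits a separatrix — the analogue of the final step in the proof of Theorem~\ref{teo_RR_secondjet} (obtained there through a holomorphic first integral on the surface). This is the step I expect to be the main obstacle, since Camacho--Sad genuinely fails on singular surfaces (cf. \cite{C}), so one must exploit the semicompleteness of $Z|_S$, presumably after a Hironaka desingularization of $S$ along centres contained in the singular locus of the lifted vector field so that it stays holomorphic. Once a separatrix of $Z$ at $p$ has been found in every case, the criterion of \cite{Rebelo96} gives $J^2(Z)(p)\neq 0$.
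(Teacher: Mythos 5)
Your overall route is the one the survey attributes to \cite{RR_secondjet}: embed $Z$ in a two-dimensional subalgebra $\mathfrak{h}=\langle Z,W\rangle$ of $\mathfrak{X}(M)$ via the spectrum of ${\rm ad}\, Z$, feed the pair into Theorem~\ref{teo_RR_secondjet} to obtain a common invariant curve, observe that this curve is a genuine separatrix of $Z$ because ${\rm Sing}\,(Z)$ is locally reduced to $\{p\}$, and conclude with the criterion of \cite{Rebelo96} (semicomplete $+$ isolated singularity $+$ separatrix $\Rightarrow$ nonzero second jet). The Lie-algebraic reduction, the flow-box analysis when $W(p)\neq 0$, and the generic case in which Theorem~\ref{teo_RR_secondjet} applies directly are all correct as written.

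The gap is the one you flag yourself, and it is genuine: the case $W=gZ$ near $p$ with $S=\{g=0\}$ singular at $p$ (necessarily with an isolated singularity, since a one-dimensional singular locus of the invariant hypersurface would itself already be a separatrix). You cannot simply transplant ``the analogue of the final step'' of the proof of Theorem~\ref{teo_RR_secondjet}: there, the holomorphic first integral on the singular invariant surface is extracted from the fact that the two vector fields restrict to $S$ as \emph{nontrivially} proportional fields, whereas in your situation $W|_S=g|_S\cdot Z|_S\equiv 0$, so the ratio degenerates and produces nothing; likewise $g$ itself restricts to the constant $0$ on $S$, and for $\beta\neq 0$ it is not even a first integral of $Z$. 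Since Camacho's example \cite{C} shows that separatrices genuinely fail on singular surfaces, some further input (the semicompleteness of $Z|_S$, or a global argument on $M$) is indispensable here and is not supplied. One observation that at least constrains this case: if ${\rm Sing}\,(Z)$ had codimension at least~$2$ everywhere on $M$, then $g$ would extend to a global holomorphic, hence constant, function on the compact connected $M$, contradicting $W\notin\C Z$; so the troublesome configuration can only occur when $Z$ vanishes along a divisor elsewhere on $M$ and $g$ has poles there. As it stands, however, the proposal does not close this case, so the proof is incomplete.
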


The reader will note that, as far as estimates on the dimension of automorphism groups are concerned, Theorem~\ref{teo_RR_nonvanishing_secondjet}
is as effective as an affirmative solution to Ghys conjecture in dimension~$3$ in the sense that if the additional assumption needed for
Theorem~\ref{teo_RR_nonvanishing_secondjet} is not verified, then the dimension of ${\rm Aut}\, (M)$ is at most~$1$.

%
%

One of the advantages of looking for bounds for the dimension of ${\rm Aut}\, (M)$ by means of local considerations is that the
results obtained are essentially valid for open manifolds as well. For example, studying finite dimensional Lie group actions on
Stein manifolds is an active topic in several complex variables whose roots lie in a classical work of
Suzuki~\cite{suzuki}. In this direction, our techniques yield:

\begin{theorem}\cite{RR_secondjet}\label{firstjet_forStein}
Let $N$ denote a Stein manifold of dimension~$3$ and consider a finite dimensional
Lie algebra $\mathfrak{G}$ embedded in $\mathfrak{X}_{\rm comp} (N)$ (the space of complete holomorphic vector fields on $N$).
Assume that the dimension of $\mathfrak{G}$ is at least~$2$. If $Z$ is an element of $\mathfrak{G} \subseteq
\mathfrak{X}\, (M)$ possessing an isolated singular point $p \in N$, then the linear part of $Z$ at $p$ cannot vanish,
i.e. $p$ is a non-degenerate singularity of~$Z$.
\end{theorem}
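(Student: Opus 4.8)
The plan is to reduce the statement to the situation already handled by Theorem~\ref{teo_RR_nonvanishing_secondjet}, exploiting the fact that the only genuinely \emph{global} input in that latter result is the existence of a second, non-proportional complete vector field, together with the \emph{local} analysis of semicomplete singularities. First I would observe that a complete holomorphic vector field on a Stein manifold $N$ is in particular semicomplete on every open subset of $N$ (this is the local character of semicompleteness emphasized after Definition~\ref{definition_semicompletevf}). Thus the germ of $Z$ at $p$ is a germ of a semicomplete vector field on $(\C^3,0)$ with an isolated singularity, and the whole question becomes purely local: one must show that such a germ cannot have vanishing linear part once it sits inside a $2$-dimensional Lie algebra of germs of semicomplete vector fields. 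Suppose for contradiction that the linear part of $Z$ at $p$ vanishes. Since $\dim \mathfrak{G}\geq 2$, pick $W\in\mathfrak{G}$ not everywhere parallel to $Z$. The bracket $[Z,W]$ lies in $\mathfrak{G}$; using the classification of $2$-dimensional Lie algebras we may, after replacing $W$ by a suitable combination, assume either $[Z,W]=0$ or $[Z,W]=cW$ with $c\in\C^{\ast}$, exactly as in the hypotheses of Theorem~\ref{teo_RR_secondjet}. (If $W$ itself does not vanish at $p$ the conclusion is much easier; see below. So assume $W(p)=0$ as well.)

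With this reduction in place, Theorem~\ref{teo_RR_secondjet} applies and yields a germ of analytic curve $\mathcal{C}$ through $p$ simultaneously invariant by $Z$ and $W$. The next step is the one already used in the proof of Theorem~\ref{teo:scdimension2}: because $p$ is an \emph{isolated} singular point of $Z$, the restriction of $Z$ to $\mathcal{C}$ does not vanish identically, and this restriction is again semicomplete (semicompleteness passes to invariant subvarieties, after desingularizing $\mathcal{C}$ by Hironaka if necessary). A semicomplete holomorphic vector field on a germ of curve, vanishing at the base point but not identically, has a prescribed local model; the hypothesis $J^1_p Z=0$ forces its restriction to $\mathcal{C}$ to vanish to order at least~$2$, and one checks — exactly the one-dimensional computation carried out in \cite{Rebelo96} — that a vector field of the form $t^{k}\,\partial/\partial t$ with $k\geq 2$ is \emph{not} semicomplete (its solutions blow up in finite complex time along a bounded set, contradicting the maximality/escape condition in Definition~\ref{definition_semicompletevf}). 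This contradiction shows $J^1_p Z\neq 0$, which is the assertion.

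It remains to dispatch the auxiliary cases set aside above. If the companion vector field $W$ can be chosen with $W(p)\neq 0$, then in suitable local coordinates $W=\partial/\partial x_1$ and the commutation relation $[Z,W]=cW$ (or $=0$) constrains the coefficients of $Z$ to be (exponentials of) linear functions of $x_1$ times functions of $(x_2,x_3)$; restricting to the invariant surface $\{x_2=x_3=0\}$ or to a generic flow line of $W$ through points near $p$ again produces a one-dimensional semicomplete vector field whose order forces a contradiction unless $J^1_p Z\neq 0$. The step I expect to be the main obstacle is making Theorem~\ref{teo_RR_secondjet} genuinely applicable: that theorem is stated for pairs of vector fields vanishing at the origin and spanning a codimension~$1$ foliation, so one must verify that $Z$ and the chosen $W$ are not everywhere parallel (which is exactly the hypothesis $\dim\mathfrak{G}\geq 2$ combined with the fact that multiples of a single vector field form a $1$-dimensional space) and that the normalization of the $2$-dimensional Lie algebra relation can be performed \emph{within} $\mathfrak{G}$ without destroying completeness — a point that is routine for abstract Lie algebras but should be spelled out here. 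Once that bookkeeping is done, the rest is the local semicompleteness argument of \cite{Rebelo96}, now run on the curve $\mathcal{C}$ furnished by Theorem~\ref{teo_RR_secondjet} rather than on a Camacho--Sad separatrix.
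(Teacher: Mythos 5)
There is a genuine gap, and it sits at the very last step of your argument. The one-dimensional fact you invoke is misstated: $t^{2}\,\partial/\partial t$ \emph{is} semicomplete (its solution $t_0/(1-t_0T)$ is single-valued and escapes every compact set as $T$ approaches the boundary of its maximal domain); only $t^{k}\,\partial/\partial t$ with $k\geq 3$ fails to be semicomplete, because the solution $(t_0^{1-k}-(1-k)T)^{1/(1-k)}$ acquires non-trivial monodromy. This is precisely why the restriction-to-a-separatrix argument of \cite{Rebelo96} yields Theorem~\ref{teo:scdimension2} in the form ``$J^2_0X\neq 0$'' and not ``$J^1_0X\neq 0$'', and why Theorem~\ref{teo_RR_nonvanishing_secondjet} (valid on arbitrary compact $3$-manifolds) also only controls the second jet. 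Running your argument with the correct one-dimensional model therefore proves at best that the \emph{second} jet of $Z$ at $p$ is non-zero, whereas Theorem~\ref{firstjet_forStein} asserts that the \emph{linear part} is non-zero. Your proposal never closes this gap.

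The missing idea is exactly where the Stein hypothesis must enter --- and you never use it. As the discussion following the statement makes explicit, the conclusion ``second jet non-zero'' holds for arbitrary complex $3$-manifolds; the role of the Stein condition is specifically to upgrade this to ``first jet non-zero''. Concretely, one has to rule out the borderline case in which $Z$ restricted to the invariant curve furnished by Theorem~\ref{teo_RR_secondjet} has a zero of order exactly~$2$ at $p$: for a \emph{complete} vector field this forces the corresponding orbit to compactify to a rational curve (the only Riemann surface carrying a complete holomorphic vector field with a double zero is $\CP^1$), and a Stein manifold contains no positive-dimensional compact analytic subsets. Some argument of this kind --- exploiting completeness on $N$ together with the absence of compact curves in a Stein manifold, rather than mere semicompleteness of the germ --- is indispensable, and it is absent from your proposal. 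The reduction to Theorem~\ref{teo_RR_secondjet} via a $2$-dimensional subalgebra containing $Z$ is a reasonable and essentially correct preliminary step (though you should verify that such a subalgebra exists inside an arbitrary finite-dimensional $\mathfrak{G}$, e.g.\ by taking an eigenvector of $\mathrm{ad}_Z$ on $\mathfrak{G}/\C Z$), but it only sets the stage for the part of the proof that is actually missing.
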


We may point out that the automorphism group of a Stein manifold is not a finite dimensional Lie group in general. Indeed, even $\C^2$
has an infinite dimensional group of automorphisms with hardly any non-trivial structure of Lie group.
This difficulty is avoided in the statement of Theorem~\ref{firstjet_forStein}
by the assumption that, from the beginning, we are dealing with some finite dimensional Lie algebra: owing to Lie theorem,
such Lie algebra can always be integrated
to yield a (complete) action of the corresponding Lie group. This part of the statement actually holds for arbitrary complex manifolds
of dimension~$3$ (whether or not they are compact or Stein) and parallels Theorem~\ref{teo_RR_nonvanishing_secondjet} in the sense that
the second jet of a vector field $Z \in \mathfrak{G}$ will never be zero at an isolated singular point. The role played by the Stein
condition is to ensure that the {\it first jet}, rather than the second one, is different from zero at isolated
singular points.

To close this paper, let us go back to resolution theorems as discussed in Section~\ref{resolution_blowups} and further
sharpen the results under the {\it additional assumption of having semicomplete vector fields}. As already mentioned, resolution theorems
always play a central role in singularity theory and sharp resolution statements exist for $1$-dimensional foliations
in dimension~$3$. Yet, given their importance, it is convenient to have available the simplest possible resolution statements in every
circumstance. In particular, it is natural to wonder if semicomplete singularities or other special classes of singular points
allow for simpler resolution theorems facilitating a more detailed analysis of their structures.

The possibility of having simpler resolution theorems valid for semicomplete singularities was also considered in~\cite{survey_RR}
whose initial motivation was, in fact, to obtain a resolution
theorem for {\it semicomplete singularities} that would faithfully parallel Seidenberg theorem for foliations on $(\C^2,0)$.
This type of statement is useful to approach problems such as Ghys conjecture or to investigate
compact complex manifolds of dimension~$3$ equipped with holomorphic vector fields. In this setting, Theorem~\ref{teo:B}
below is proved in~\cite{survey_RR}.

\begin{theorem}\cite{survey_RR}\label{teo:B}
Let $X$ be a semicomplete vector field defined on a neighborhood of the origin of $\C^3$ and denote by $\fol$ the holomorphic
foliation associated with $X$. Then one of the following holds:
\begin{enumerate}
  \item The linear part of $X$ at the origin is nilpotent (non-zero).

  \item There exists a finite sequence of (standard) blow-ups along with transformed foliations
$$
\fol = \fol_0 \stackrel{\pi_1}\longleftarrow \fol_1 \stackrel{\pi_2}\longleftarrow \cdots
\stackrel{\pi_r}\longleftarrow \fol_r
$$
such that all of the singular points of $\fol_r$ are elementary. Moreover, each blow-up map $\pi_i$ is centered in the
singular set of the corresponding foliation $\fol_{i-1}$. In other words, the foliation $\fol$ can be resolved by means of
standard blow-ups.
\end{enumerate}
\end{theorem}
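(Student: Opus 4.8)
The statement is a dichotomy, so the plan is to assume that alternative~(1) fails --- i.e. that $D_0X$ is either zero or has a nonzero eigenvalue --- and to establish alternative~(2). If $D_0X$ has a nonzero eigenvalue the origin is an elementary singular point, and in fact so is every singular point of $\fol$ in a small enough neighbourhood: the function sending a zero $q$ of $X$ to the largest modulus of an eigenvalue of $D_qX$ is continuous on ${\rm Sing}(X)$ and is positive at $0$, hence stays bounded below by a positive constant on ${\rm Sing}(X)$ near $0$. So in this case~(2) holds with no blow-ups at all, and the whole content of the theorem is concentrated in the case $D_0X=0$, which we assume from now on. Here the plan is to run the resolution of Theorem~\ref{teo:A} on $\fol$ while keeping track of the vector field. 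Two facts make this possible: a standard blow-up whose centre is contained in a singular set carries a holomorphic vector field to a holomorphic one (cf. Section~\ref{transformingvectorfields}), and since off the exceptional divisor such a blow-up is a biholomorphism while semicompleteness passes to subsets, the transform of a semicomplete vector field is again semicomplete near the exceptional divisor. Hence every transform $\widetilde X$ of $X$ produced by the resolution of Theorem~\ref{teo:A} --- all of whose centres lie in singular sets --- is a holomorphic semicomplete vector field.

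By Theorem~\ref{teo:A}, the only way alternative~(2) can fail is that this resolution produces a persistent nilpotent singular point; equivalently, that $\fol$ is not resolvable by a finite sequence of standard blow-ups centred at singular sets. Suppose, for a contradiction, that this happens. Then Theorem~\ref{Microlocalversion_TheoremA} applies: a sequence of one-point blow-ups centred at singular points leads to a foliation $\fol'$ with a singular point $p$ around which a representative of $\fol'$ equals $(y+zf)\,\partial/\partial x + zg\,\partial/\partial y + z^n\,\partial/\partial z$, with $n\ge 2$, with $f$ and $g$ of order $\ge 1$ and $\partial g/\partial x(0,0,0)\ne 0$, and such that $\fol'$ admits a formal separatrix $\widehat S$ tangent to the $z$-axis. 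Since the centres used are singular points, the corresponding transform $\widetilde X$ of $X$ is holomorphic and semicomplete near $p$; being a representative of $\fol'$, it is a holomorphic unit times the vector field just displayed, and in particular the multiplicity of $\widetilde X$ along $\widehat S$ equals $n$.

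It remains to contradict the semicompleteness of $\widetilde X$, and this is the heart of the matter. Split according to $n$. If $n\ge 3$, restrict $\widetilde X$ to $\widehat S$: the induced (formal) one-dimensional vector field $\widehat g(t)\,\partial/\partial t$ vanishes at the origin to order $n\ge 3$, and such a germ is not semicomplete, since the primitive of the time form $dt/\widehat g(t)$ involves a fractional power of $t$, so the induced flow is multivalued --- whereas semicompleteness of $\widetilde X$ forces the flow along the invariant curve $\widehat S$ to be single-valued (cf. the one-dimensional analysis in \cite{Rebelo96}, \cite{Ghys-R}). This rules out $n\ge 3$. The genuinely delicate case is $n=2$, which I expect to be the main obstacle: now $\widehat g(t)\,\partial/\partial t$ has order $2$, which is compatible with semicompleteness, so one must exploit the transverse behaviour of $\widetilde X$ along $\widehat S$. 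The key point is that the hypothesis $\partial g/\partial x(0,0,0)\ne 0$ pins down the two eigenvalues of the linear part of $\widetilde X$ transverse to $\widehat S$, read as functions of a parameter $t$ along $\widehat S$ (for which $\dot t\sim c\,t^2$ with $c\ne 0$): the relevant $2\times 2$ block has trace of order $t$ but determinant of order $t$ with leading coefficient $-\partial g/\partial x(0,0,0)$, so its eigenvalues behave like $\pm\sqrt{t}$. Integrating a transverse coordinate against $dt/\widehat g(t)\sim dt/(c\,t^2)$ then introduces a factor behaving like $\exp({\rm const}/\sqrt{t})\sim\exp({\rm const}\,\sqrt{T})$ in the flow parameter $T$, which is genuinely multivalued. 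This contradicts the semicompleteness of $\widetilde X$ and completes the argument.

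A few remarks on this plan. What the last two paragraphs really prove is a purely local statement --- \emph{no persistent nilpotent singular point admits a semicomplete representative vector field} --- and the normal form of Theorem~\ref{Microlocalversion_TheoremA}, together with the nondegeneracy $\partial g/\partial x(0,0,0)\ne 0$ it carries, is exactly what makes this accessible; I would expect the careful formalisation of the $n=2$ transverse computation (including making precise the restriction of a semicomplete vector field to a formal invariant curve and the resulting monodromy obstruction) to be the technically heaviest part. An alternative organisation of that case performs the single weighted blow-up of weight~$2$ that resolves $p$ and checks that some resulting elementary singular point violates a known local obstruction to semicompleteness; in either route the non-integrality produced by $\partial g/\partial x(0,0,0)\ne 0$ is what creates the ramification. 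Finally, nothing further has to be proved in alternative~(1): when $D_0X$ is nilpotent and nonzero we are simply in that case, and the Sancho--Sanz family of Section~\ref{subsec:pns} --- whose members have nilpotent nonzero linear part at the origin and cannot be resolved by standard blow-ups --- shows that alternative~(1) cannot be removed from the statement.
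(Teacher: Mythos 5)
The paper only cites \cite{survey_RR} for this theorem, so I am comparing your plan against the mechanism that the surrounding text makes visible. Your skeleton --- dispose of the elementary case, assume $D_0X=0$, run Theorem~\ref{teo:A} while transporting the vector field, and derive a contradiction with semicompleteness at a would-be persistent nilpotent point via the normal form of Theorem~\ref{Microlocalversion_TheoremA} --- is the right shape, but the step you yourself call the heart of the matter is not merely incomplete: it aims at a false statement. Your ``purely local statement'' (no persistent nilpotent singular point admits a semicomplete representative vector field) is refuted by the paper's own example $Z = x^2\,\partial/\partial x + xz\,\partial/\partial y + (y-xz)\,\partial/\partial z$ at the end of Section~\ref{subsec:local_aspects}: this vector field is semicomplete (it extends to a complete field on an open manifold), its singular set $\{x=y=0\}$ has codimension~$2$ so it is a genuine representative of its foliation, its linear part is nonzero nilpotent, and after permuting the roles of $x$ and $z$ it realizes exactly the normal form of Theorem~\ref{Microlocalversion_TheoremA} with $n=2$. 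So the $n=2$ transverse computation you sketch (the $\exp(\mathrm{const}/\sqrt{t})$ monodromy) cannot produce a contradiction from semicompleteness of a \emph{representative} alone; if it did, alternative~(1) would be vacuous for semicomplete fields, and the example $Z$ shows it is not.

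The error that leads you into this dead end is the sentence ``being a representative of $\fol'$, it is a holomorphic unit times the vector field just displayed.'' When $D_0X=0$ the order of $X$ at the origin is at least~$2$, so already the first pullback $\pi_1^{\ast}X$ vanishes to positive order along the exceptional divisor, and the final transform $\widetilde X$ near the persistent nilpotent point $p$ has the form $h\cdot Y'$ with $Y'$ the representative and $h$ a \emph{non-unit} vanishing on the local divisor $\{z=0\}$ (this is exactly the point stressed in Section~\ref{transformingvectorfields} and in the remark following Lemma~\ref{nondicriticalvectorfields}). Since the formal separatrix is transverse to $\{z=0\}$, this extra factor pushes the multiplicity of $\widetilde X$ along it strictly above $n\geq 2$; it is this excess --- which is absent precisely when ${\rm ord}_0X=1$, i.e. in the nilpotent case~(1) --- that must carry the contradiction with semicompleteness, not any property of the representative $Y'$ itself. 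Your $n\geq 3$ discussion points in the right direction but is also only heuristic as written: the separatrix produced by Theorem~\ref{Microlocalversion_TheoremA} is strictly formal, so one cannot literally restrict the flow to it and invoke the one-dimensional classification of \cite{Rebelo96}; turning the order-$\geq 3$ obstruction into an argument valid along a divergent invariant curve is a genuine piece of work, not a citation. Finally, the Sancho--Sanz vector fields you invoke to show alternative~(1) is not removable are not shown to be semicomplete anywhere in the paper; the example that actually plays that role is $Z$ above.
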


Let us emphasize that item~1 in Theorem~\ref{teo:B} involves the linear part of the vector field X rather than the linear part of
the associated foliation $\fol$. In fact, it is the linear part of $X$ that has to be (nilpotent) {\it non-zero from the outset}.
Furthermore, this property is ``universal'' in the sense that it does not depend on any sequence of blow-ups/blow-downs carried out.
In particular, we can choose a ``minimal model'' for our manifold and the corresponding transform of $X$ will still have non-zero
nilpotent linear part at the corresponding point. From what precedes, the following also deserves to be singled out.

\begin{corol}\cite{survey_RR}\label{coro:C}
Let $X$ be a semicomplete vector field defined on a neighborhood of $(0,0,0) \in \C^3$
and assume that the linear part of $X$ at the origin is equal to zero. Then item~(2) of Theorem~\ref{teo:B} holds.
\end{corol}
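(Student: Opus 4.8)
The plan is to read off Corollary~\ref{coro:C} directly from the dichotomy supplied by Theorem~\ref{teo:B}. That theorem asserts, for an arbitrary semicomplete holomorphic vector field $X$ on a neighborhood of $0\in\C^3$, that one of the following two situations occurs: either the linear part $D_0X$ is a \emph{non-zero} nilpotent matrix, or the associated foliation $\fol$ can be resolved by a finite sequence of standard blow-ups centered in the successive singular loci. Under the hypothesis of the corollary we have $D_0X=0$, so $D_0X$ is certainly not a non-zero nilpotent matrix and alternative~(1) in the statement of Theorem~\ref{teo:B} is ruled out. Since the theorem guarantees that at least one of the two alternatives holds, we necessarily land in alternative~(2), which is precisely the conclusion of the corollary.

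The one place where a little care is needed — and it is the only thing resembling an obstacle here — is in the phrasing of alternative~(1): it concerns the \emph{vector field} $X$ itself (and not the foliation $\fol$, nor any birational model produced along the way), and it excludes the zero matrix only through the parenthetical qualifier ``non-zero''. Because the zero matrix is technically nilpotent, one must invoke that qualifier to see that the hypothesis $D_0X=0$ genuinely contradicts item~(1); once this is noted, no blow-up argument whatsoever is required and the proof is complete.

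It seems worth isolating this statement because the hypothesis $D_0X=0$ places us in the regime where the first non-zero homogeneous component of $X$ has degree at least~$2$, i.e.\ exactly the setting in which the core dynamics of $\fol$ can be arbitrarily wild (cf.\ Theorem~\ref{summary_chaosinfoliations} and Section~\ref{subsec:sc_Halphen}). The corollary records that the semicomplete assumption, while it does nothing to tame that global core dynamics, does suppress the ``persistent nilpotent'' obstruction discussed in Section~\ref{subsec:pns}: for such $X$ the foliation is resolvable by standard blow-ups alone, so that weighted blow-ups of the kind appearing in Theorem~\ref{danielJDG} are never needed.
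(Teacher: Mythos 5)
Your deduction is logically airtight given Theorem~\ref{teo:B} exactly as stated: the hypothesis that the linear part of $X$ vanishes negates alternative~(1) --- precisely because of the parenthetical ``(non-zero)'', which you correctly identify as the load-bearing qualifier, the zero matrix being nilpotent --- and the disjunction then forces alternative~(2). This is also essentially how the survey itself presents the corollary, which is introduced with ``From what precedes, the following also deserves to be singled out.'' You should, however, take note of the authors' explicit caveat in the closing sentence of the section: both Corollary~\ref{coro:C} and Corollary~\ref{thelastcorollary_compact} are said to be ``strictly speaking by-products of the methods used to prove Theorem~\ref{teo:B} rather than formal consequences of the statement of this theorem.'' For Corollary~\ref{thelastcorollary_compact} the caveat is unavoidable (at a point where the linear part is non-zero nilpotent the disjunction yields nothing), but for Corollary~\ref{coro:C} it signals where the real mathematical content sits: namely in the assertion, emphasized right after Theorem~\ref{teo:B}, that any obstruction to resolution by standard blow-ups must manifest as a non-zero nilpotent linear part of the \emph{vector field itself, already at the origin} --- not merely of the associated foliation, nor of some transform appearing deep in the blow-up tree, where the transformed vector field could well vanish to higher order along the exceptional divisor even though the foliation there is persistently nilpotent. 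Your proof is therefore correct as a reading of the survey's packaging of the result, but it should be presented as such: the substance lives in establishing the ``(non-zero) from the outset'' clause of item~(1), and the one-line Boolean step you perform afterwards is the easy part.
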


Accurate normal forms for persistent nilpotent singular points were provided in the same paper, c.f. Theorem~\ref{Microlocalversion_TheoremA}.
However, not all of them need to be semicomplete or, indeed, realized as singularity of a complete flow.
Taking into account the global setting of complete vector fields, it
is natural to wonder if there is, indeed, complete vector fields inducing a foliation with singular points that cannot be resolved
by standard blow ups as in item~(2) of Theorem~\ref{teo:B}. As a matter of fact, these singularities {\it do exist}\, and an
explicit example is provided by
the polynomial vector field
\[
Z = x^2 \frac{\partial}{\partial x} + xz \frac{\partial}{\partial y} + (y - xz)\frac{\partial}{\partial z} \, .
\]
Although $Z$ is not complete on $\C^3$, it
can be extended to a complete vector field defined on a suitable {\it open manifold}. In particular, the point corresponding to
the origin of the above coordinates $(x,y,z)$ constitutes a nilpotent
singular point of $Z$ that cannot be resolved by means of standard blow-ups with centers in the singular set.

Finally, we might emphasize that the example above involves a complete vector field defined on an open manifold. We might then
ask if this phenomenon still occurs in the far more restrictive context of compact manifolds of dimension~$3$.
Since in the compact case the completeness condition becomes automatic, we are simply asking whether or
not there is a compact complex manifold of dimension~$3$ equipped with a (global) holomorphic vector field $X$ which exhibits
a singular point that cannot be resolved by means of standard blow ups. This time, the answer turns out to be negative
as the following holds:

\begin{corol}\label{thelastcorollary_compact}
Let $\fol$ be the foliation associated with a vector field $X$ globally defined on some compact complex manifold $M$ of dimension~$3$. Then every
singular point of $\fol$ can be resolved by a sequence of standard blow ups.
\end{corol}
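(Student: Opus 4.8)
The plan is to apply the local dichotomy of Theorem~\ref{teo:B} at every singular point of $\fol$ and then to argue that the ``exceptional'' branch of that dichotomy cannot in fact produce an unresolvable singularity when the ambient manifold is compact. The first observation is that on the compact manifold $M$ the vector field $X$ is automatically complete, so its germ at any point is semicomplete; moreover, if $Y$ is a local representative of $\fol$ then $\mathrm{Sing}\,(\fol)=\{Y=0\}\subseteq\{X=0\}$, so $X$ vanishes identically on $\mathrm{Sing}\,(\fol)$ and every smooth center contained in the singular set is pointwise fixed by the flow of $X$. Blowing up such a center therefore keeps the ambient compact, keeps the transform of $X$ holomorphic --- this is the Claim in Section~\ref{transformingvectorfields} --- and, the flow lifting to the new compact manifold, keeps that transform complete, hence semicomplete. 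These invariance properties will be used repeatedly.

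Now fix a singular point $p$ of $\fol$. By Theorem~\ref{teo:B}, either $\fol$ is already resolved near $p$ by standard blow-ups --- in which case there is nothing to prove at $p$ --- or the linear part of $X$ at $p$ is nilpotent and non-zero. In the latter case, since $p\in\mathrm{Sing}\,(\fol)$, the vector field $X$ cannot have a divisor of zeros through $p$ (a local factor vanishing at $p$ would force the linear part of $X$ to vanish there), so $X$ itself is a representative of $\fol$ near $p$. Apply the resolution theorem for $1$-dimensional foliations, Theorem~\ref{teo:A}, performing only standard blow-ups with centers contained in the successive singular sets: by the previous paragraph each transform of $X$ remains holomorphic, complete and semicomplete, and the process ends at a foliation all of whose singular points are elementary or persistently nilpotent. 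If no persistently nilpotent singularity occurs, $\fol$ has been resolved near $p$ by standard blow-ups and we are done. So assume one does occur, at a point $q$. Applying Theorem~\ref{teo:B} to the semicomplete transform $X'$ at $q$ and using that $\fol'$ is \emph{not} resolvable by standard blow-ups there, we must be in the second branch: $X'$ has non-zero nilpotent linear part at $q$ and, as before, is a representative of $\fol'$. By Theorem~\ref{Microlocalversion_TheoremA}, a finite sequence of one-point blow-ups centered at singular points (again preserving holomorphy, completeness and semicompleteness) brings $X'$ to the normal form
\[
(y+zf(x,y,z))\fpartx \;+\; zg(x,y,z)\fparty \;+\; z^{n}\fpartz\,, \qquad n\geq 2,\quad \tfrac{\partial g}{\partial x}(0,0,0)\neq 0,
\]
possessing a formal separatrix $S$ tangent to the $z$-axis with $\mathrm{mult}\,(\fol',S)=n$.

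The remaining step --- and the main difficulty --- is to rule this configuration out using the global, compact nature of $M$; note that semicompleteness by itself is not enough, since the non-compact example $Z$ recalled just above realizes a persistent nilpotent singularity by a semicomplete vector field. Restricting $X'$ to $S$ gives $\varphi^{\ast}X'|_{S}=g_{0}(t)\,\fpart{t}$ with $\mathrm{ord}\,(g_{0})=n$. Along a genuine invariant curve a semicomplete vector field vanishing at a point has order at most~$2$, which would force $n=2$; the catch is that $S$ is a priori only formal, and closing this gap is precisely where compactness must enter. The route I would follow is: on the compact model, use the weighted resolution of Theorem~\ref{danielJDG} to reach a compact orbifold on which $\fol$ has only elementary singularities; elementary singular points carry convergent separatrices, and tracking them back should show that the strictly formal separatrix attached to a persistent nilpotent point of a \emph{globally defined} vector field on a compact $3$-fold is in fact convergent. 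One is then reduced to $n=2$ with $S$ a genuine $\fol$-invariant analytic curve; taking the closure $\overline{S}\subset M$, which is a compact $\fol$-invariant curve, and combining the $\Z/2\Z$-orbifold description of the persistent nilpotent point from \cite{danielMcquillan} (which fixes the holonomy of $\fol$ along $S$) with the triviality of that holonomy forced by semicompleteness in multiplicity~$2$ --- compare the argument in the proof of Proposition~\ref{necessary_condition-Halphen} --- together with a Camacho--Sad type index relation along $\overline{S}$, yields the contradiction. Everything before this last step is essentially bookkeeping built on Theorems~\ref{teo:B}, \ref{teo:A}, \ref{Microlocalversion_TheoremA}, \ref{danielJDG} and the invariance of standard blow-ups under $X$; the genuine content, and the expected main obstacle, is the non-existence of persistent nilpotent singularities for vector fields globally defined on compact $3$-folds.
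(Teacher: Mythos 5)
Your reduction is sound and matches the way the paper frames the problem: on a compact $M$ the vector field is complete, its transforms under standard blow-ups centered in the singular sets stay holomorphic and complete, and the only possible obstruction to resolution by standard blow-ups is a persistent nilpotent singular point of the transformed foliation; at such a point Theorem~\ref{teo:B} forces the (transformed) vector field to have non-zero nilpotent linear part and hence to be a genuine representative of the foliation there. Up to that point the bookkeeping is correct. But the decisive step --- showing that a holomorphic vector field globally defined on a \emph{compact} $3$-fold cannot exhibit a persistent nilpotent singularity --- is exactly where your argument stops being a proof. You acknowledge this yourself, and the two ingredients you propose to close it are both problematic. First, the formal separatrix $S$ attached to a persistent nilpotent point is, in the relevant examples, \emph{strictly} formal, and nothing in the statements of Theorems~\ref{danielJDG} or~\ref{teo:A} lets you ``track back'' convergent separatrices of the elementary final models to conclude that $S$ converges; indeed the divergence of $S$ is essentially what makes these points persistent in the first place. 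Second, even granting convergence and $n=2$, the object $\overline{S}$ is the closure of a leaf germ, not a compact invariant analytic curve, so no Camacho--Sad type index relation along it is available; and the local information (semicompleteness plus multiplicity~$2$) cannot suffice, as the paper's own example $Z = x^2\,\partial/\partial x + xz\,\partial/\partial y + (y-xz)\,\partial/\partial z$ shows.

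For comparison, the paper does not derive Corollary~\ref{thelastcorollary_compact} from the statements of the quoted theorems at all: it explicitly warns that both Corollary~\ref{coro:C} and Corollary~\ref{thelastcorollary_compact} are ``by-products of the methods used to prove Theorem~\ref{teo:B} rather than formal consequences of the statement of this theorem,'' the actual argument living in \cite{survey_RR}. So the genuine content of the corollary is precisely the global non-existence statement you flagged as the ``expected main obstacle,'' and your proposal does not supply it. What you have is a correct reduction plus a speculative sketch of the hard part, not a proof.
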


In closing, let us just point out that both Corollary~\ref{coro:C} and Corollary~\ref{thelastcorollary_compact} are strictly
speaking by-products of the methods used to prove Theorem~\ref{teo:B} rather than formal consequences of the statement
of this theorem.


\end{document}